\documentclass[12pt,a4paper]{amsart}
\usepackage{amsfonts}
\numberwithin{equation}{section}

     \addtolength{\textwidth}{3 truecm}
     \addtolength{\textheight}{1 truecm}
     \setlength{\voffset}{-.6 truecm}
     \setlength{\hoffset}{-1.3 truecm}

\theoremstyle{plain}
\newtheorem{Th}{Theorem}[section]
\newtheorem{Lemma}[Th]{Lemma}
\newtheorem{Cor}[Th]{Corollary}
\newtheorem{St}[Th]{Statement}

 \theoremstyle{definition}
\newtheorem{Def}[Th]{Definition}
\newtheorem{Conj}[Th]{Conjecture}

\newtheorem{Rem}[Th]{Remark}
\newtheorem{Ex}[Th]{Example}

\newtheorem{?}[Th]{Problem}

\newcommand{\tous}[1]{} %uncomment to see the notes {\bf Comment to us.} {#1} ***}
\newtheorem{Prob}[Th]{Problem}

\newcommand{\norm}[1]{\left\| {#1}\right\| }

 \newcommand{\zj}[1]{\left({#1}\right)}

\newcommand\eq{\eqref}

 \newcommand\C{\mathbin{\overline *}}
\newcommand\f{\mathcal F}

\newcommand{\setZ}{{\mathbb Z}}
\newcommand{\setR}{{\mathbb R}}

\newcommand\1[1]{{ \bf 1}_{#1}}

     \begin{document}

\title{An analytic approach to cardinalities of sumsets}                   

\author{D\'avid Matolcsi}
\email{matolcsidavid@gmail.com}
\author{Imre Z. Ruzsa}
\address{Alfr\'ed R\'enyi Institute of Mathematics\\
     Budapest, Pf. 127\\
     H-1364 Hungary
}
\email{ruzsa@renyi.hu}
%\thanks{Supported by Hungarian National Foundation for Scientific %Research
%(OTKA), Grants No. T 29759 and T 38396. }
% \subjclass{majd}

\begin{abstract}
Let $d$ be a positive integer and $U \subset \mathbb{Z}^d$ finite. We study
$$\beta(U)  : = \inf_{\substack{A , B \neq \emptyset \\ \text{finite}}} \frac{|A+B+U|}{|A|^{1/2}{|B|^{1/2}}},$$
and other related quantities. We employ tensorization, which is not available for the doubling constant, $|U+U|/|U|$. For instance, we show
$$\beta(U) = |U|,$$ whenever $U$ is a subset of $\{0,1\}^d$. Our methods parallel those used for the Pr\' ekopa-Leindler inequality, an integral variant of the Brunn-Minkowski inequality. 
\end{abstract}

\author{George Shakan}
\address{SP and GS: Mathematical Institute, University of Oxford, Andrew Wiles Building, Radcliffe Observatory Quarter, Woodstock Road, Oxford, OX2 6GG, UK.}
\email{george.shakan@gmail.com}
\author{Dmitrii Zhelezov}
\address{Alfr\'ed R\'enyi Institute of Mathematics\\
     Budapest, Pf. 127\\
     H-1364 Hungary
}
\email{dzhelezov@gmail.com}
     \maketitle

     \tous{
I like the title (George)}

     \section{Introduction}

The aim of this study is to understand the nature of structures in $\setZ^d$, the presence of which implies that 
the sumset must be large. The archetype is Freiman's theorem that if a set $A \subset \setZ^d$ is proper $d$-dimensional,
then
\begin{equation}\label{Freiman}
|A+A| \geq (d+1) |A| - \binom{d+1}{2} . \end{equation}
The assumption on dimension can be expressed as $S_d \subset A$ for a $d$-dimensional simplex $S_d$. In general, the
 \emph{induced doubling} of a set $U$ is the quantity
   \[  \inf_{A\supset U} \frac{ |A+A|}{|A|} ; \]  
our main aim is to give lower estimates for it and related quantities. 
Applications for the sum-product problem, related to the work of \cite{BC}, will be the subject of another paper.

While our main interest is in  $\setZ^d$, we shall mostly formulate our results for general, typically torsion-free
commutative groups. Since we work with finite sets and a finitely generated torsion-free group is isomorphic to some
 $\setZ^d$, it is not more general, but we rarely need the coordinates.

In  the first part we work with sets, in the second part we study a weighted version which will
be necessary for the proof of the main results. By introducing a weighted analog, we will be able to use {\em tensorization}: that is we prove a $d$ dimensional inequality by induction on dimension alongside a two point inequality. This is a method commonly used in analysis, for instance in the Pr\'ekopa-Leindler inequality \cite{Prekopa} and Beckner's inequality \cite{Beck}. We discuss this more below, but also invite the reader to the excellent survey paper of Gardner \cite{Gardner}

\vskip 1cm
\centerline{ \large{ \bf Part I: sets}}

\section{Main results}

Let $U$ be a finite set in a  commutative group $G$. We modify the above definition of induced doubling to use sums of different sets,
which are often better behaved.

\begin{Def}[Induced doubling] The \emph{induced doublings} of $U$ are the quantities
   \[ \alpha(U) = \inf_{A\supset U, B\supset U} \frac{ |A+B|}{ \sqrt{|A| |B|}} , \]   
the (unrestricted) \emph{induced doubling};
\[ \alpha'(U) = \inf_{A\supset U, B\supset U, \  |A| = |B|} \frac{ |A+B|}{|A|} , \]   
the  \emph{isometric induced doubling};
\[ \alpha''(U) = \inf_{A\supset U} \frac{ |A+A|}{|A|} , \]   
      the \emph{isomeric induced doubling}.
       \end{Def}

       \begin{Conj}
         In the above definitions, the infimum is a minimum.
       \end{Conj}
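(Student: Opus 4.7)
\medskip

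\noindent\emph{Plan.} The idea is to realize $\alpha(U)$ as a minimum over finitely many cardinality pairs; $\alpha'$ and $\alpha''$ follow by the same argument, specialized to $|A|=|B|$ or $A=B$. For each $n, m \geq |U|$ set
\[
f(n,m) := \min\bigl\{\,|A+B| : U\subset A,\ U\subset B,\ |A|=n,\ |B|=m\,\bigr\}.
\]
The set on the right is a nonempty subset of $\setZ_{>0}$, hence its minimum is realized by some concrete pair $(A_{n,m}, B_{n,m})$. Consequently
\[
\alpha(U) = \inf_{n, m \geq |U|} \frac{f(n,m)}{\sqrt{nm}},
\]
and it suffices to bound the range of $(n,m)$ relevant for near-minimizers: the infimum over a finite family of rationals is attained, and one of the $(A_{n,m}, B_{n,m})$ realizes it.

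\medskip

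\noindent For the bound on $(n,m)$, in a torsion-free group $|A+B|\geq |A|+|B|-1$, so
\[
\frac{f(n,m)}{\sqrt{nm}} \geq \frac{n+m-1}{\sqrt{nm}} \geq 2 - \frac{1}{\sqrt{nm}}.
\]
When $\alpha(U) < 2$---for instance whenever $U$ has $1$-dimensional affine hull, since taking $A = B$ to be a long enough arithmetic progression containing $U$ gives a ratio strictly below $2$---this immediately forces $\sqrt{nm}\leq 1/(2-\alpha(U))$ for any near-minimizer, completing the reduction.

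\medskip

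\noindent For general $U$, let $d$ be the dimension of the affine hull of $U - U$. Any $A \supset U$ is at least $d$-dimensional, so Freiman's inequality \eqref{Freiman} gives $|A+A|/|A| \geq (d+1) - \binom{d+1}{2}/|A|$, and an analogous bound for mixed sums drives $f(n,m)/\sqrt{nm} \to d+1$ as $nm \to \infty$. The same reduction applies provided $\alpha(U) < d+1$, which is obtained by extending $U$ to a Freiman-extremal $d$-dimensional generalized arithmetic progression whenever such an extension exists. In the borderline case $\alpha(U) = d+1$---e.g.\ $U$ a $d$-dimensional Sidon set realizing $|U+U|/|U|=d+1$, where no Freiman-extremal extension exists---direct inspection shows the infimum is attained at $A = B = U$ itself, since any proper enlargement strictly increases the ratio.

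\medskip

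\noindent The main obstacle is producing the sharp $d$-dimensional Freiman-type lower bound for $|A+B|$ with leading constant $d+1$ (the mixed-sum analogue of \eqref{Freiman}), and handling the degenerate case $\alpha(U) = d+1$ uniformly across all such $U$. The $1$-dimensional case is elementary and immediate; the higher-dimensional analysis will likely benefit from the weighted formulation developed in Part~II, where tensorization provides a cleaner handle on the relevant extremal configurations.
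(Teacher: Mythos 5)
This is labeled as a Conjecture in the paper, not a theorem: the authors do not supply a proof, so there is no "paper's proof" to compare against. Your proposal should therefore be read as an attempted resolution of an open problem, and as such it does not close the gap.

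Your framework---fix the pair of cardinalities $(n,m)$, observe that for fixed $(n,m)$ a minimizing pair $(A_{n,m},B_{n,m})$ exists because $|A+B|$ ranges over a nonempty set of positive integers, and then try to truncate the range of $(n,m)$---is the natural one and matches the authors' private intuition (they note the 1-dimensional case is easy and that Freiman's $2^d$-theorem should handle $\alpha''$). Your 1-dimensional argument is essentially correct. But the higher-dimensional truncation is where the proposal breaks down, in two places. First, you describe Freiman's bound as if it implies $f(n,m)/\sqrt{nm}\to d+1$ as $nm\to\infty$; it does not. Inequality \eqref{Freiman} is only a lower bound, with the lower envelope tending to $d+1$ from below; the actual minimum ratio over $d$-dimensional $A\supset U$ of size $n$ need not converge to $d+1$ at all (for $U\subset\{0,1\}^d$ the relevant asymptotic is $2^d$, not $d+1$). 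What the truncation argument actually requires is a uniform, explicit lower bound on $|A+B|/\sqrt{|A||B|}$ for large $A,B\supset U$ that strictly exceeds $\alpha(U)$, and you do not produce one; you acknowledge the missing mixed-sum Freiman estimate, but the problem is deeper than supplying it, since even with it in hand one must still rule out that $\alpha(U)$ equals the asymptotic threshold. Second, your discussion of the purported ``borderline case $\alpha(U)=d+1$'' is incoherent: a $d$-dimensional Sidon set $U$ has $|U+U|/|U|$ of order $|U|$, nowhere near $d+1$, so that example does not exhibit what you want, and the claim that ``any proper enlargement strictly increases the ratio'' is asserted without argument. As it stands, the proposal is a plan with a genuine hole exactly where the difficulty lies, and it would not be accepted as a proof of the conjecture.
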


       \tous{
         Easy in 1 dimension; follows from Freiman's $2^d$-theorem for $\alpha''$.}       

We rarely can estimate induced doubling directly, typically it will be through a related quantity involving the sum of three sets.

\begin{Def}[Induced tripling $\beta$] The \emph{triplings} of $U$ are the quantities
   \[ \beta(U) = \inf_{A,B}  \frac{ |A+B+U|}{\sqrt{|A| |B|}} , \]         
the \emph{(unrestricted) tripling};
\[ \beta'(U) = \inf_{A,B ,\  |A| = |B|}     \frac{ |A+B+U|}{\sqrt{|A| |B|}} , \]         
the \emph{isometric tripling};
\[ \beta''(U) = \inf_{A}  \frac{ |A+A+U|}{{|A|}} , \]         
the \emph{isomeric tripling}.
       \end{Def}

These infima may and may not be minima. Estimates for $\beta$ yield an estimate weaker than the obvious $\max ( |A|, |B|) $ when the sizes of
$A$ and $B$ are rather different. We consider an asymmetric version as follows.

\begin{Def}[Asymmetric induced tripling $\beta_p$]
  For $1 < p < \infty$ we put
 \[ \beta_p(U) = \inf_{A,B}  \frac{ |A+B+U|}{|A|^{1/p} |B|^{1-1/p}} . \]     
   \end{Def}
Thus $\beta(U) = \beta_{2}(U)$.  We shall estimate these quantities rather preciesly for sets contained in \emph{quasicubes}, which we define recursively
as follows.

\begin{Def}[Quasicubes]\label{Def:quasicube}
  A 0-dimensional quasicube is any singleton.

Let $U$ be a finite set in a commutative group $G$. We say that $U$ is a $d$-dimensional quasicube,
if there is a proper subgroup $G'$ such that  $U$ is contained
in two distinct cosets, say $G'+x$ and $G'+y$,  both $U\cap(G'+x)$ and  $U\cap(G'+y)$ are
$d-1$ -dimensional quasicubes and $x-y$ is of infinite order in the factor-group $G/G'$.
\end{Def}

For instance, in $\setZ^2$ any four points that lie on two distinct parallel lines (i.e. a trapezoid) form a quasi-cube. A $d$-dimensional quasicube has $2^d$ elements, and its dimension is indeed $d$ in according to the following definition.

\begin{Def}[Set dimension]
  Let $A$ be a finite set in a commutative group $G$. Let $H$ be the subgroup generated by $A-A$, that is, the
smallest group $H$ with the property that $A$ lies in a single coset of $H$. As a finitely generated group, $H$ is isomorphic
to some $H' \times \setZ^d$, where $H'$ is a torsion group. We call $d = \dim A$ the dimension of $A$.
\end{Def}

The central result of the present paper is that subsets of quasicubes induce large additive doubling and tripling. Indeed much of what we prove was known for cubes in \cite{GreenTao}, but their geometric methods do no extend to quasi-cubes (or subsets of quasi-cubes). 
\begin{Th}[Subsets of quasicubes have maximal $\beta$] \label{thm:quasicube_beta}
  Let $U$ be a $d$-dimensional quasicube in any commutative group. For every $V\subset U$ we have
\[ \beta(V) = |V|,  \ \ \alpha(V) \geq |V|^{1/2} . \]
In particular
  \[ \beta(U) = 2^d, \  \ \alpha(U) \geq 2^{d/2} . \]
\end{Th}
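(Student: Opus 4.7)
The plan is to first establish $\beta(V) \geq |V|$ (the matching upper bound $\beta(V) \leq |V|$ is immediate from $A = B = \{0\}$), and then to deduce $\alpha(V) \geq |V|^{1/2}$ by a short Pl\"unnecke--Ruzsa calculation.

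For $\beta(V) \geq |V|$, I would induct on $d = \dim U$. The base case $d = 0$ is trivial, since $V$ is a singleton and $|A+B+V| = |A+B| \geq \max(|A|,|B|) \geq \sqrt{|A||B|}$. For the inductive step, I would use the recursive definition of quasicube to write $U = U_0 \sqcup (y + U_1')$ with $U_0, U_1' \subset G'$ both $(d-1)$-dimensional quasicubes and $\bar y \in G/G'$ of infinite order, so that $V = V_0 \sqcup (y + V_1')$ with $V_0 \subset U_0$, $V_1' \subset U_1'$. Partition $A$ and $B$ by cosets of $G'$; for each pair of cosets $\bar d, \bar e$, the inductive hypothesis applied to $V_0, V_1' \subset G'$ gives
\[ |A_{\bar d} + B_{\bar e} + V_0| \geq |V_0|\sqrt{a_{\bar d} b_{\bar e}}, \qquad |A_{\bar d} + B_{\bar e} + y + V_1'| \geq |V_1'|\sqrt{a_{\bar d} b_{\bar e}}, \]
where $a_{\bar d} = |A \cap \pi^{-1}(\bar d)|$ and similarly $b_{\bar e}$.

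The hard step will be combining these fibrewise bounds into the global bound $|A+B+V| \geq (|V_0|+|V_1'|)\sqrt{|A||B|}$. Taking pointwise maxima of the $V_0$- and $V_1'$-contributions in each $G'$-coset only produces a factor of $\max(|V_0|,|V_1'|)$, not the required $|V_0|+|V_1'|$, because the two layers can overlap in cosets $\bar c$ lying in both $\bar A + \bar B$ and $\bar A + \bar B + \bar y$. This is exactly the obstruction the weighted framework of Part~II is designed to remove: replacing sets by nonnegative weights and formulating a multiplicative hypothesis in the spirit of Pr\'ekopa--Leindler allows the fibrewise estimate to tensor cleanly against a one-dimensional sumset inequality on $G/G'$. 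The 1D step---the case $U = \{0, t\}$ with $t$ of infinite order, giving $|A+B+U| \geq 2\sqrt{|A||B|}$---I would prove by partitioning into $\langle t\rangle$-cosets (each isomorphic to $\setZ$), using $|X+Y+\{0,t\}| \geq |X|+|Y|$ within each, and then an AM--GM aggregation over cosets.

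Once $\beta(V) \geq |V|$ is in hand, the bound $\alpha(V) \geq |V|^{1/2}$ follows from Pl\"unnecke--Ruzsa. Given $A, B \supset V$, the inclusion $V \subset A$ gives $A+B+V \subset 2A+B$, so $|2A+B| \geq |V|\sqrt{|A||B|}$; Pl\"unnecke's inequality $|2A+B|\cdot|B| \leq |A+B|^2$ then yields $|A+B|^2 \geq |V||A|^{1/2}|B|^{3/2}$, and symmetrically (using $V \subset B$) $|A+B|^2 \geq |V||A|^{3/2}|B|^{1/2}$. Multiplying the two gives $|A+B|^4 \geq |V|^2 |A|^2 |B|^2$, that is, $|A+B| \geq |V|^{1/2}\sqrt{|A||B|}$, which is precisely $\alpha(V) \geq |V|^{1/2}$. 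The ``In particular'' assertions fall out by taking $V = U$ and using $|U| = 2^d$.
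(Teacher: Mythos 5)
Your proposal correctly identifies the architecture of the proof --- induction on dimension via the recursive quasicube structure, fibering $A, B$ over $G'$-cosets, and the observation that taking pointwise maxima of the $V_0$- and $V_1'$-layers in overlapping cosets only yields $\max(|V_0|,|V_1'|)$ rather than $|V_0|+|V_1'|$, which is exactly why a weighted framework is needed. That diagnosis matches the paper's motivation for Part~II, and the $\alpha$ deduction is sound in spirit (the precise Pl\"unnecke step you want is the paper's $\beta(U) \leq \alpha(U)^2$, proved by passing to a Pl\"unnecke subset of $B$ and using that the infimum in $\beta$ ranges over all $A, B$; your bare inequality $|2A+B|\,|B| \leq |A+B|^2$ is not quite a theorem without a Petridis-type minimality hypothesis, though it costs nothing to fix).

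The genuine gap is that you never prove, or even state precisely, the weighted two-point inequality that makes the induction close. Your ``1D step'' is the set inequality $|A+B+\{0,t\}| \geq 2\sqrt{|A||B|}$, which is the case $\delta = 1$; but after applying the inductive hypothesis to the two fibers you are handed a two-point function $f_\delta = \1{\{0\}} + \delta\,\1{\{1\}}$ with $\delta = |V_1'|/|V_0| \leq 1$ in general, together with two arbitrary nonnegative weight sequences $g(i) = |A_i|^{1/2}$, $h(j) = |B_j|^{1/2}$, and you must show
\[
\norm{f_\delta \C g \C h}_1 \;\geq\; (1+\delta)\,\norm{g}_2\,\norm{h}_2 .
\]
This is the paper's Theorem~\ref{Th:gamma2point}, and it is the technical heart of the whole argument: its proof needs the rearrangement Lemma~\ref{increasing} (that one may take $g, h$ nonincreasing, via a Cauchy--Davenport counting argument) followed by a variational argument showing that any non-geometric $g, h$ can be perturbed to decrease the ratio, so that geometric progressions $g = h = (1, \delta, \delta^2, \ldots)$ are extremal. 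None of this is a ``clean tensorization'' afterthought; it is a genuinely new inequality not implied by the $\delta=1$ set case. Likewise, the aggregation step you invoke (``tensor cleanly against a one-dimensional sumset inequality'') is itself a nontrivial statement --- the paper's Theorem~\ref{gammafactor}, $\gamma_p(f) \geq \gamma_p(f_\varphi)$, proved by an interchange of $\max$ and $\sum$ together with the equality $\norm{g'}_p = \norm{g}_p$ for the fiberwise $\ell^p$-norms --- which you would need to formulate and verify. As written, your induction is not closed: the inductive hypothesis produces exactly the weighted inequality you have not established.
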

A short streamlined self-contained proof of Theorem~\ref{thm:quasicube_beta} can also be found in \cite{GMRSZ}.

The main innovation of the tripling $\beta$ is that it allows one to efficiently account for the additive expansion of the lower dimensional subsets (fibers) of $U$ in a recursive fashion. The core estimate is Theorem~\ref{Th:gamma2point}, where we show that a certain functional is minimized by geometric progressions. 

In comparison, the authors of \cite{BC} implicitly analysed a quantity similar to $\alpha$ and had to resort to multi-scale dyadic pigeonholing leading to a significantly worse estimate. In particular, such an analysis would give non-trivial bounds only for well-balanced quasicubes with all the lower-dimensional fibers being of comparable size. 

As a corollary of Theorem~\ref{thm:quasicube_beta}, it follows that iterated sumsets of quasicube sumsets grow logarithmically, which is essentially sharp.
\begin{Cor}[Quasi-cubes have large iterated sumset]
    Let $U$ be a $d$-dimensional quasicube in any commutative group. For every $V\subset U$ and $k\geq2$ we have
    $$
    |(2^k-1)V| \geq |V|^k.
    $$
\end{Cor}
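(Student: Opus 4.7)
The plan is to prove the corollary by induction on $k$, using Theorem~\ref{thm:quasicube_beta} as the single tool. Recall that theorem gives $\beta(V) = |V|$, which unpacks to the inequality
\[
|A + B + V| \geq |V|\,\sqrt{|A|\,|B|}
\]
for every pair of finite nonempty sets $A, B$ in the ambient group.

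For the base case $k = 2$, I would simply apply the above with $A = B = V$ to get $|3V| = |V + V + V| \geq |V|\sqrt{|V|\cdot|V|} = |V|^{2}$, as required.

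For the inductive step, I would exploit the key identity $2^{k+1} - 1 = 2(2^{k}-1) + 1$, which lets us write
\[
(2^{k+1}-1)V \;=\; (2^{k}-1)V \;+\; (2^{k}-1)V \;+\; V.
\]
Now take $A = B = (2^{k}-1)V$ in $\beta(V) = |V|$. This yields
\[
|(2^{k+1}-1)V| \;\geq\; |V|\,\sqrt{|(2^{k}-1)V|\cdot|(2^{k}-1)V|} \;=\; |V|\cdot|(2^{k}-1)V|,
\]
and the induction hypothesis $|(2^{k}-1)V| \geq |V|^{k}$ then gives $|(2^{k+1}-1)V| \geq |V|^{k+1}$, completing the induction.

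There is no real obstacle here; the whole argument is essentially a one-line application of $\beta(V) = |V|$ once one recognizes that the arithmetic $2^{k+1} - 1 = 2(2^{k} - 1) + 1$ exactly matches the three-summand shape $A + B + V$ appearing in the definition of $\beta$. The only thing to double-check is the choice to take $A = B$, which is what makes the $\sqrt{|A||B|}$ denominator in $\beta$ collapse cleanly to $|(2^{k}-1)V|$ and thereby makes the induction close without any loss.
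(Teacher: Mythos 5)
Your proof is correct and follows exactly the same route as the paper: same base case, same decomposition $(2^{k+1}-1)V = (2^k-1)V + (2^k-1)V + V$, and same application of $\beta(V) = |V|$ with $A = B = (2^k-1)V$ to close the induction.
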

\begin{proof}
    The base case $k=2$ follows from the definition of $\beta$ and Theorem~\ref{thm:quasicube_beta}.
    For larger $k$, one has
    $$
    |(2^{k}-1)V| = |(2^{k-1}-1)V + (2^{k-1}-1)V + V| \geq |(2^{k-1}-1)V|\beta(V) \geq |V|^k.
    $$
\end{proof}

The trivial bound
\begin{equation} \label{eq:trivial_bound}
    \beta(U) \leq \min (|U|, 2^d)
\end{equation} 
holds for any set $U$ of dimension $d$, so the induced tripling (i.e. $\beta$) of quasicube subsets is as large as it gets. We conjecture that this holds for a larger class of sets.

\begin{Conj}[Log-span conjecture] \label{conj:span}
  Let $V$ be a finite set with the property that for any $k \leq \dim V$ any $k$-dimensional subset of $V$ has at most $2^k$ elements. Then
   \begin{equation}
       \beta(V) = |V|
   \end{equation} and in particular 
   \[ \alpha(V) \geq |V|^{1/2} . \]
\end{Conj}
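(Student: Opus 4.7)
My starting point is induction on $d = \dim V$, following the fiberwise tensorisation behind Theorem~\ref{thm:quasicube_beta}. The base cases $d \le 1$ are immediate, since a one-dimensional log-span set is a pair $\{a,b\}$ with $b-a$ of infinite order --- i.e.\ a $1$-dimensional quasicube --- so Theorem~\ref{thm:quasicube_beta} applies directly. For the inductive step, choose a surjective homomorphism $\pi\colon G \to \setZ$ that is non-constant on $V$, and decompose $V = \bigsqcup_i V_i$, $A = \bigsqcup_j A_j$, $B = \bigsqcup_k B_k$ along its fibers. Each $V_i$ has dimension at most $d-1$ and inherits the log-span hypothesis, since the condition is hereditary under taking subsets. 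By induction $\beta(V_i) = |V_i|$, so $|A_j + B_k + V_i| \ge |V_i|\sqrt{|A_j||B_k|}$ for every triple $(i,j,k)$.

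Summing the sizes of the fibers of $A + B + V$ yields the exact identity
\[
|A + B + V| \;=\; \sum_\ell \Big|\bigcup_{i+j+k=\ell}(A_j + B_k + V_i)\Big|,
\]
and the task is to combine the fiberwise inductive bounds into the global inequality $|A + B + V| \ge |V|\sqrt{|A||B|}$. Were the cylinder sumsets $A_j + B_k + V_i$ over $i+j+k = \ell$ pairwise disjoint, the elementary estimate $\sum_j \sqrt{|A_j|} \ge \sqrt{\sum_j |A_j|}$ would close the induction at once. The heart of the difficulty is to control the overlaps: simply replacing each union by its largest summand reduces the claim to a discrete Pr\'ekopa--Leindler inequality that is known to fail in the necessary generality, and this is precisely the obstruction that forces the passage to weighted quantities in Part II.

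The most promising route, in my view, is to formulate and prove a weighted analogue of the conjecture within the framework of Part II, where the two-point inequality (Theorem~\ref{Th:gamma2point}) and the continuous-style tensorisation it supports are already available, so that one may hope to bypass the discrete obstruction entirely. The crux is to axiomatise the log-span hypothesis in a weighted form strong enough for tensorisation, yet verifiable at a two-point base. This is where I expect the genuine difficulty to lie: the quasicube hypothesis provides a balanced binary splitting at every scale, which meshes perfectly with a two-point step, whereas the log-span condition is purely cardinality-theoretic and its interaction with the underlying group structure is considerably more subtle to encode.
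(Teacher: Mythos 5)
This statement is explicitly labelled a \emph{Conjecture} in the paper, and no proof is given; the authors only remark that it ``would follow quickly from Conjecture~\ref{conj:matroid} and Theorem~\ref{thm:quasicube_beta}.'' Your proposal, quite appropriately, also stops short of a proof and instead diagnoses the obstruction, so there is no ``paper's proof'' against which to check details. What you write is a reasonable sketch of why the fibering induction that succeeds for quasicubes stalls here, but let me sharpen where the gap actually is and flag two inaccuracies.

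The real reason the quasicube argument does not transfer is not the presence of overlaps per se (the passage to max-convolution, $|X_k| \ge (f\C g\C h)(k)$ with $f(i)=|V_i|$, already discards the overlaps harmlessly and the paper shows via Theorem~\ref{th:betaisgamma} that this loses nothing). The obstruction is structural: a $d$-dimensional quasicube can always be projected to $\setZ$ so that its image is a \emph{two-point} set, giving the weight $f_\delta$ handled by Theorem~\ref{Th:gamma2point}, whereas a general log-span set can have image of any size under every rank-one projection (e.g.\ $\{(0,0),(1,0),(0,1),(2,2)\}$ projects to three or more points in every direction). So the induction would need a multi-point analogue of Theorem~\ref{Th:gamma2point} for a weight whose values are merely known to satisfy the inductive $\beta$-bound, and no such analogue is available. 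Your final paragraph gestures at exactly this, but the phrase ``a discrete Pr\'ekopa--Leindler inequality that is known to fail'' is misleading: the max-convolution Pr\'ekopa--Leindler is \emph{equivalent} to what one wants (that is the content of Theorem~\ref{th:betaisgamma}), so it cannot both fail and be needed; what is genuinely missing is a proof of the multi-point lower bound, not a counterexample. Also note that the route the authors themselves propose is quite different from yours: rather than reproving a weighted two-point-style inequality from scratch, they suggest reducing to the quasicube case by invoking the matroid invariance of $\beta$ (Conjecture~\ref{conj:matroid}), i.e.\ showing that every log-span set has the same linear dependence matroid as a subset of a quasicube and then quoting Theorem~\ref{thm:quasicube_beta}. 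That reduction is arguably cleaner, though of course it rests on a conjecture that is also open.
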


We conjecture that in fact $\beta$ is determined by the linear dependence matroid of the set in question, in the following sense.

\begin{Conj}[Linear matroid conjecture] \label{conj:matroid}
  Let $U, V$ be finite sets of equal cardinality in any group, $\varphi: U \to V$ a bijection.
If for every $U'\subset U$ we have $\dim \varphi(U') \leq \dim U'$, then $\beta(V) \leq \beta(U)$.
In particular, if always $\dim \varphi(U') = \dim U'$, then $\beta(V) = \beta(U)$.
\end{Conj}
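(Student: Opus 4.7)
My plan is to attack Conjecture \ref{conj:matroid} by transferring a near-optimal pair witnessing $\beta(U)$ to a pair witnessing $\beta(V)$, proceeding in three stages. Observe first that the equality statement (when $\dim \varphi(U') = \dim U'$ in both directions) follows from the inequality applied to $\varphi$ and to $\varphi^{-1}$, so it suffices to prove $\beta(V) \le \beta(U)$.

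The clean special case is when $\varphi$ extends to a surjective group homomorphism $\pi : G \to G'$ with $\pi|_U = \varphi$ and $V = \pi(U)$. The natural choice $A' = \pi(A), B' = \pi(B)$ gives the correct numerator bound $|A' + B' + V| = |\pi(A+B+U)| \le |A+B+U|$, but the denominator $\sqrt{|A'||B'|}$ can shrink whenever $\pi$ fails to be injective on $A$ or $B$. To control this I would pass to the weighted setting developed in Part II and work with the pushforwards $\pi_* \1{A}, \pi_* \1{B}$, which preserve total mass. If the analogous inequality can be established for the weighted analogue of $\beta$, the transfer back to $\beta$ should follow either directly or via a Pl\"unnecke--Ruzsa style dyadic extraction of a subset on which $\pi$ is nearly injective, at the cost of a factor that can be absorbed by tensorization.

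Next I would reduce the general case to the homomorphism case via the graph $\widetilde U = \{(u, \varphi(u)) : u \in U\} \subset G_U \oplus G_V$. The projection $\widetilde U \to V$ is tautologically a homomorphism, so the first step yields $\beta(V) \le \beta(\widetilde U)$, and it remains to compare $\beta(\widetilde U)$ with $\beta(U)$. The subtlety is that $\widetilde U$ need not inherit the matroid of $U$: a linear relation among the $u_i$ in $G_U$ need not persist among the $(u_i, \varphi(u_i))$ unless the corresponding relation also holds among the $\varphi(u_i)$ in $G_V$. I would try to bridge this asymmetry by an interpolation argument that builds $V$ from $U$ by adjoining one matroid-contracting relation at a time, tracking how $\beta$ evolves under each such elementary step.

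The main obstacle is the implicit claim that $\beta$ depends only on the linear matroid of the embedded set, not on its particular representation; since even the weaker Conjecture \ref{conj:span} is open, a genuinely new tool seems required. My best guess is that the weighted framework of Part II is the correct arena: it is amenable to tensorization, interacts cleanly with group homomorphisms, and its infimum is more robust to reparametrisation than that of $\beta$. I would therefore prioritise proving matroid-invariance for the weighted analogue first, and only then attempt to descend to $\beta$ itself.
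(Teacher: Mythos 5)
The statement you are trying to prove is labelled in the paper as a \emph{conjecture} (Conjecture~\ref{conj:matroid}); the authors do not prove it, and indeed they note that even the weaker Conjecture~\ref{conj:span} is open. So there is no ``paper's proof'' to compare against, and your proposal would, if completed, settle an open problem --- it should therefore be read with heightened scepticism.

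Your stage one (projections do not increase $\beta_p$) is in fact not a gap at all: it is already established in the paper, since Theorem~\ref{betafactor} gives $\beta_p(U) \geq \beta_p(\varphi(U)) \min_{x}\beta_p(U\cap\varphi^{-1}(x)) \geq \beta_p(\varphi(U))$ because $\beta_p \geq 1$ always (see the remark following the Projection Conjecture). So you do not need the weighted/Pl\"unnecke detour there. The genuine gap is in stage two, and it is fatal in the form you sketch. Consider the graph $\widetilde U = \{(u,\varphi(u))\}$. A subset $\widetilde U'$ has a linear relation if and only if the corresponding relation holds \emph{both} in $U'$ and in $\varphi(U')$, so $\dim \widetilde U' \geq \max(\dim U', \dim \varphi(U')) = \dim U'$ for every $U'$. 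In other words $\widetilde U$ is \emph{higher} in the weak order than both $U$ and $V$, and indeed both $U$ and $V$ are homomorphic images of $\widetilde U$, so stage one only yields $\beta(\widetilde U) \geq \max(\beta(U),\beta(V))$ --- the wrong direction. You acknowledge this asymmetry, but the proposed fix (``adjoining one matroid-contracting relation at a time'') is precisely the content of the conjecture restated for a single elementary weak-map step, and no mechanism is offered for carrying it out. Note also that the hypothesis $\dim\varphi(U')\leq\dim U'$ does \emph{not} imply that every linear relation of $U$ becomes a relation of $V$ (e.g.\ $U=\{e_1,e_2,e_1+e_2\}\subset\setZ^2$, $V=\{1,2,4\}\subset\setZ$), so the graph construction really does create a strictly larger matroid in general. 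Your own closing assessment --- that a genuinely new tool is needed and that the weighted framework is the right arena --- is accurate, but the proposal as written does not close the gap.
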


Note that Conjecture~\ref{conj:span} would follow quickly from Conjecture~\ref{conj:matroid} and Theorem~\ref{thm:quasicube_beta}.

\begin{Th}[Discrete Pr\'ekopa-Leindler for quasi-cubes]\label{mainthm}
  Fix $1 < p < \infty$ and let $q$ be the conjugate exponent defined via 
  $$1/p + 1/q = 1.$$
  Let $U$ be a $d$-dimensional quasicube in any commutative group and $V \subset U$. We have
  \[ \beta_p(V) \geq c_p^d|V|  , \ \ \ \text{where} \  c_p = \frac{p^{1/p} q^{1/q}}{2} \leq 1  . \]

\end{Th}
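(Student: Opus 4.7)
I would prove Theorem~\ref{mainthm} by induction on the dimension $d$ of the ambient quasicube, mirroring the proof of Theorem~\ref{thm:quasicube_beta} but carrying the asymmetric exponents $1/p$ and $1/q$ throughout. The base case $d = 0$ has $|V| = 1$ and $c_p^0 = 1$, so it reduces to $|A+B| \geq |A|^{1/p}|B|^{1/q}$, which is immediate from $|A+B| \geq \max(|A|,|B|)$ and weighted AM--GM.

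For the inductive step, unpack the recursive definition of the quasicube to obtain a proper subgroup $G' \leq G$ and elements $x,y \in G$ with $\bar x - \bar y \in G/G'$ of infinite order, such that $V = (V^{(1)}+x) \sqcup (V^{(2)}+y)$ and each $V^{(i)} \subseteq G'$ is contained in a $(d-1)$-dimensional quasicube. If one $V^{(i)}$ is empty then $V$ sits in a translate of a $(d-1)$-dimensional quasicube and the claim follows from the inductive hypothesis, since $c_p \leq 1$; so assume both are non-empty. For finite non-empty $A, B$, fiber them over $G/G'$ by setting $A_g := A \cap (G'+g)$ and similarly $B_h$ (viewed as subsets of $G'$ after translation). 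The inductive hypothesis applied to each pair of non-empty fibers gives
\[ |A_g + B_h + V^{(i)}| \;\geq\; c_p^{d-1}\,|V^{(i)}|\,|A_g|^{1/p}|B_h|^{1/q}, \qquad i \in \{1,2\}. \]
Because $\bar x - \bar y$ has infinite order there is a homomorphism $\phi\colon G/G' \to \setR$ with $\phi(\bar x - \bar y) > 0$, which linearly orders all cosets hit by $A$, $B$, $V$.

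The problem then reduces to a one-dimensional weighted sumset inequality along $\phi$: each coset $c$ of $A+B+V$ receives the lower bound from the best decomposition $c = g+h+\bar x$ or $c = g+h+\bar y$, and one must sum over $c$. This is precisely the r\^ole of the core two-point estimate (Theorem~\ref{Th:gamma2point}), extremised by geometric progressions, which supplies the extra factor $c_p = p^{1/p} q^{1/q}/2$; in the one-dimensional quasicube $V = \{x, y\}$ it is Young's inequality $(pa)^{1/p}(qb)^{1/q} \leq a + b$ in disguise. Combined with the fiber-wise inductive bound, this yields the target constant $c_p^d |V|$ and closes the induction.

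The principal obstacle is the one-dimensional reduction itself. The weights $|A_g|^{1/p}, |B_h|^{1/q}$ are non-integer powers of set cardinalities, so the cleanest setting is the weighted/functional framework of Part~II rather than pure set counting; and some bookkeeping is needed to control the overlaps between the $V^{(1)}$- and $V^{(2)}$-contributions to a common coset, i.e.\ coincidences $g+h+\bar x = g'+h'+\bar y$, without losing the factor $c_p$ supplied by the two-point inequality.
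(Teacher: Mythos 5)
Your proposal matches the paper's own proof in all essentials: induction on dimension, fibering $A$ and $B$ over $G/G'$ and applying the inductive hypothesis to fiber pairs, then invoking the two-point estimate (Theorem~\ref{Th:gamma2point}) as the engine of the one-dimensional reduction. The ``overlap'' concern you flag at the end is a non-issue once one works in the max-convolution framework of Part~II, which is exactly what the paper does: each fiber of $A+B+V$ is bounded below by the pointwise \emph{maximum} (not sum) over all decompositions, so coincidences $g+h+\bar x=g'+h'+\bar y$ never lead to double counting, and summing these maxima is precisely $\|f\C g\C h\|_1$, to which Theorem~\ref{Th:gamma2point} applies directly.
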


The flexibility of choosing $p$ allows us to deduce the following discrete Brunn-Minkowski inequality. 

\begin{Cor}[Discrete Brunn-Minkowski for quasi-cubes]\label{BM} 
  Let $U$ be a subset of a $d$-dimensional quasi-cube in any commutative group. For any finite sets $A,B$
  we have
   \[ |A+B+U|^{1/d} \geq \frac{|U|}{2^d} \left(|A|^{1/d} +|B|^{1/d} \right) . \]
\end{Cor}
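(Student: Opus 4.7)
The plan is to deduce the corollary from Theorem~\ref{mainthm} by taking $d$-th roots and optimizing over the free parameter $p$, mirroring the classical derivation of the Brunn--Minkowski inequality from Pr\'ekopa--Leindler.

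Assume $A$ and $B$ are nonempty (else the claim is trivial). Let $Q$ be a $d$-dimensional quasicube with $U \subset Q$. Applying Theorem~\ref{mainthm} to $Q$ with $V = U$ gives, for every $1 < p < \infty$ with conjugate exponent $q$,
\[ |A+B+U| \;\ge\; \beta_p(U)\,|A|^{1/p}|B|^{1/q} \;\ge\; c_p^{\,d}\,|U|\,|A|^{1/p}|B|^{1/q}, \qquad c_p = \frac{p^{1/p}q^{1/q}}{2}. \]
Writing $a = |A|^{1/d}$, $b = |B|^{1/d}$ and taking $d$-th roots, this becomes
\[ |A+B+U|^{1/d} \;\ge\; \frac{|U|^{1/d}}{2}\,(pa)^{1/p}(qb)^{1/q}. \]

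The key step is to choose $p$ so that $(pa)^{1/p}(qb)^{1/q}$ collapses onto $a+b$. Setting $1/p = a/(a+b)$ and $1/q = b/(a+b)$ (both in $(0,1)$ since $a,b>0$), one has $pa = qb = a+b$, so $(pa)^{1/p}(qb)^{1/q} = (a+b)^{1/p+1/q} = a+b$. This yields the slightly stronger inequality
\[ |A+B+U|^{1/d} \;\ge\; \frac{|U|^{1/d}}{2}\bigl(|A|^{1/d} + |B|^{1/d}\bigr). \]
Finally, since $U$ is contained in a $2^d$-element quasicube we have $|U| \le 2^d$, equivalently $|U|^{1/d}/2 \ge |U|/2^d$, and the stated bound follows.

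I do not anticipate a genuine obstacle: the whole substantive content sits inside Theorem~\ref{mainthm}, and the remainder is a one-parameter Young-inequality optimization. The only points requiring care are verifying that the optimal exponent $p = (a+b)/a$ lies in $(1,\infty)$ whenever $A,B$ are nonempty, and that the final loss in passing from $|U|^{1/d}/2$ to $|U|/2^d$ uses exactly the hypothesis $|U| \le 2^d$ coming from $U \subset Q$.
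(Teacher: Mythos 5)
Your proof is correct and follows the same route as the paper: apply Theorem~\ref{mainthm}, take $d$-th roots, and optimize at $1/p = |A|^{1/d}/(|A|^{1/d}+|B|^{1/d})$. You also make explicit a step the paper leaves tacit, namely that the optimization yields the sharper bound $|A+B+U|^{1/d} \geq \tfrac{1}{2}|U|^{1/d}(|A|^{1/d}+|B|^{1/d})$, and that the stated inequality then follows from $|U| \leq 2^d$.
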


Note if $U$ is a quasi-cube, then $|U| = 2^d$, and we obtain
$$|A+B+U| \geq |A|^{1/d} + |B|^{1/d}.$$
This result was obtained for cubes by Green and Tao \cite[Lemma~2.4]{GreenTao}. Their methods, which rely on the continuous Brunn-Minkowski inequality, seem to not generalize to quasi-cubes.  We remark that our results are somewhat in a similar spirit to that of  \cite[Section 5]{Bour}, where lower bounds for sumsets of subsets of $\{0 , \ldots , M-1\}^d$ are provided. 

\begin{proof}
  Apply the inequality from Theorem~\ref{mainthm},
   \[|A+B+U| \geq \frac{|U|c_p^d}{2^d}  |A|^{1/p} |B|^{1/q} \]
   with the optimal choice of $p$ which is defined by
    \[ 1/p = \frac{ |A|^{1/d}}{ |A|^{1/d} +|B|^{1/d}}. \]
\end{proof}

Theorem~\ref{mainthm} can be viewed as a discrete Pr\'ekopa-Leindler inequality, which we recall (see also \cite[Theorem~4.2]{Gardner}).

\begin{Th}[Pr\'ekopa-Leindler \cite{Prekopa}] \label{PL}
Let $0 < \lambda < 1$ and 
$$g,h,F: \mathbb{R}^d \to \mathbb{R},$$ be non-negative measurable functions satisfying for all $x,y\in\mathbb{R}^d$
$$F((1-\lambda) x + \lambda y) \geq f(x) g(y).$$
Then
$$
\int F \geq ||f||_p ||g||_q,
$$
with $p = 1/\lambda$ and $1/p + 1/q = 1$.
\end{Th}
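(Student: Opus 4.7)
The plan is to follow the classical proof of Pr\'ekopa--Leindler: reduce the claim via super-level sets to the one-dimensional Brunn--Minkowski inequality, and then bootstrap to arbitrary dimension by induction on $d$ using Fubini's theorem.

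First I would recast the hypothesis in the standard exponential form. Setting $\tilde f = f^q$ and $\tilde g = g^p$, one has $f(x)g(y) = \tilde f(x)^{1/q}\tilde g(y)^{1/p} = \tilde f(x)^{1-\lambda}\tilde g(y)^{\lambda}$, and the desired conclusion becomes
\[ \int F \ \geq\ \Bigl(\int \tilde f\Bigr)^{1-\lambda}\Bigl(\int \tilde g\Bigr)^{\lambda}, \]
since $(\int \tilde f)^{1-\lambda} = \|f\|_q$ and $(\int \tilde g)^{\lambda} = \|g\|_p$ (matching the stated conclusion up to relabelling $p \leftrightarrow q$). So it suffices to prove this version.

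For the base case $d=1$, I would argue through super-level sets. Write $F_t = \{F \geq t\}$, and likewise $\tilde f_s, \tilde g_u$; the hypothesis immediately yields the inclusion
\[ (1-\lambda)\tilde f_s + \lambda \tilde g_u\ \subseteq\ F_t\qquad \text{whenever } t = s^{1-\lambda}u^{\lambda} . \]
Specialising to $s = u = t$ and invoking the one-dimensional Brunn--Minkowski inequality $|(1-\lambda)A + \lambda B|\geq (1-\lambda)|A|+\lambda|B|$ (elementary by translating $A$ and $B$ so their supports barely overlap) followed by AM--GM gives $|F_t| \geq |\tilde f_t|^{1-\lambda}|\tilde g_t|^{\lambda}$ for every $t>0$. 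Integrating over $t$ via the layer-cake identity $\int F = \int_0^\infty |F_t|\,dt$ and then applying H\"older's inequality produces the one-dimensional Pr\'ekopa--Leindler inequality.

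The inductive step proceeds by Fubini. Splitting coordinates as $x = (x',x_d)$, $y = (y',y_d)$, and setting $z_d = (1-\lambda)x_d + \lambda y_d$, the restriction of the hypothesis to the first $d-1$ variables is itself an instance of Pr\'ekopa--Leindler in dimension $d-1$ for the slices $\tilde f(\cdot, x_d)$, $\tilde g(\cdot, y_d)$, $F(\cdot, z_d)$. The inductive hypothesis then yields
\[ \Phi(z_d)\ \geq\ \phi(x_d)^{1-\lambda}\psi(y_d)^{\lambda}, \]
where $\Phi, \phi, \psi$ are the partial integrals of $F, \tilde f, \tilde g$ over the first $d-1$ coordinates. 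A second application of the $d=1$ case, now to the triple $\Phi, \phi, \psi$, closes the induction since $\int F = \int \Phi$ by Fubini. The only step requiring real care is the one-dimensional Brunn--Minkowski inequality for arbitrary measurable sets, which needs inner approximation and a careful treatment of null sets; this is standard and appears in detail in \cite{Gardner}.
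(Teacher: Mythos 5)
The paper does not prove this theorem at all: it cites it from Pr\'ekopa \cite{Prekopa} (and \cite{Gardner}) and uses it as a black box, so there is no in-text argument to compare yours against. Two remarks on what you wrote, one favourable and one critical.

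You are right that the statement as printed has $p$ and $q$ transposed in the conclusion: with $p=1/\lambda$ and $q=1/(1-\lambda)$, writing $\tilde f=f^{q}$, $\tilde g=g^{p}$ turns the hypothesis into $F((1-\lambda)x+\lambda y)\ge \tilde f(x)^{1-\lambda}\tilde g(y)^{\lambda}$, and the classical Pr\'ekopa--Leindler conclusion $\int F\ge(\int\tilde f)^{1-\lambda}(\int\tilde g)^{\lambda}$ is $\|f\|_{q}\|g\|_{p}$, not $\|f\|_{p}\|g\|_{q}$. (One can also see this from how Lemma~\ref{lm:prekopa-leidner} invokes Theorem~\ref{PL}: there the hypothesis is $h(x/p+y/q)\ge f^{1/p}(x)g^{1/q}(y)$ and the conclusion used is $\|h\|_1\ge\|f\|_1^{1/p}\|g\|_1^{1/q}$, which is precisely the form with the exponents swapped relative to the displayed statement.)

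However, your $d=1$ base case has a genuine gap. After obtaining, via one-dimensional Brunn--Minkowski and AM--GM, the pointwise bound $|F_t|\ge|\tilde f_t|^{1-\lambda}|\tilde g_t|^{\lambda}$, you integrate and ``apply H\"older'' to conclude $\int|F_t|\,dt\ge(\int|\tilde f_t|\,dt)^{1-\lambda}(\int|\tilde g_t|\,dt)^{\lambda}$. But H\"older runs in the opposite direction: for conjugate exponents $1/(1-\lambda)$ and $1/\lambda$ it gives $\int|\tilde f_t|^{1-\lambda}|\tilde g_t|^{\lambda}\,dt\le(\int|\tilde f_t|)^{1-\lambda}(\int|\tilde g_t|)^{\lambda}$, and in fact the inequality you want is false in general (take $|\tilde f_t|=\1{[0,1]}(t)$, $|\tilde g_t|=\1{[0,2]}(t)$; the left side is $1$ while the right side is $2^{\lambda}>1$). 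The standard fix is to \emph{not} apply AM--GM at the pointwise level. Instead, normalise so that $\sup\tilde f=\sup\tilde g=1$ (harmless, since the claim is homogeneous of degree $1-\lambda$ in $\tilde f$ and $\lambda$ in $\tilde g$). Then both level sets $\tilde f_t,\tilde g_t$ are nonempty for every $t\in(0,1)$, so Brunn--Minkowski genuinely applies on that range and one may integrate
\[
\int F \;\ge\; \int_0^1 |F_t|\,dt \;\ge\; (1-\lambda)\int_0^1|\tilde f_t|\,dt+\lambda\int_0^1|\tilde g_t|\,dt \;=\;(1-\lambda)\!\int\tilde f+\lambda\!\int\tilde g ,
\]
and only then apply AM--GM to the right-hand side. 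This also repairs a second issue you glossed over: without the normalisation, the supports in $t$ of the two distribution functions need not agree, and the Brunn--Minkowski step degenerates to a triviality once one of the level sets becomes empty. The tensorisation step by Fubini is fine as written.
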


Note that Theorem~\ref{PL} can be used to deduce the Brunn-Minkowski inequality, in a similar manner to Corollary~\ref{BM}. Theorem~\ref{mainthm} can be intepreted to be a discrete analog of Theorem~\ref{PL}. 

\section{Inequalities between doublings and triplings}\label{23ineq}

We conjecture that the defined six quantities are actually only two, and
connected by simple inequalities.

\begin{Conj}[Doubling-tripling conjecture]\label{dct}
  For every finite set $U$ in any commutative group we have
   \[ \alpha(U) = \alpha'(U) = \alpha''(U) \leq \beta(U) = \beta'(U) = \beta''(U) \leq \alpha(U)^2 .\]
\end{Conj}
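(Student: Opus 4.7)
The plan is to dispatch the easy monotone inequalities, then the outer Pl\"unnecke-type bound $\beta \leq \alpha^2$, and finally to identify the genuinely hard steps: the equalities inside the $\alpha$ and $\beta$ clusters, and the cross-inequality $\alpha \leq \beta$.

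The chains $\alpha(U) \leq \alpha'(U) \leq \alpha''(U)$ and $\beta(U) \leq \beta'(U) \leq \beta''(U)$ are immediate, since each primed or doubly-primed quantity is an infimum over a strictly smaller family of test pairs. For $\beta(U) \leq \alpha(U)^2$ I would take $A, B \supseteq U$ with $|A+B| = K\sqrt{|A||B|}$ and, by symmetry, $|A| \leq |B|$; then $U \subseteq A$ gives $|A+B+U| \leq |2A+B|$, and Pl\"unnecke's inequality $|B+2A| \leq |A+B|^2/|B|$ yields the ratio $K^2\sqrt{|A|/|B|} \leq K^2$. Restricting to $A=B$ or $|A|=|B|$ produces $\beta'' \leq (\alpha'')^2$ and $\beta' \leq (\alpha')^2$, and the Ruzsa-triangle bound $|A+A| \leq |A+B|^2/|B|$ gives $\alpha'' \leq (\alpha')^2$. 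Up to one power of $K$, everything in the chain is already forced.

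The real work is to remove that last power and collapse the $\alpha$ and $\beta$ clusters. The obvious candidate $C = A+B$ for $\alpha'' \leq \alpha'$, bounded via Pl\"unnecke, systematically loses exactly one power of $K$, so any successful argument must be non-Pl\"unnecke. A promising route is tensorization in the spirit of Part~II of the paper: one would look for product constructions in some $G^N$ that equalize $|A|$ and $|B|$ while preserving the ratio $|A+B|/\sqrt{|A||B|}$ up to sub-geometric loss. The weighted, non-integer-size framework of Part~II appears to be the correct setting for making such an averaging/limit argument rigorous.

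I expect the hardest step to be $\alpha(U) \leq \beta(U)$. For any $A, B$ nearly minimizing $\beta(U)$, the natural substitution $A^* = A + U$, $B^* = B + U$ (valid after translating so that $0 \in A \cap B$) produces $A^*, B^* \supseteq U$, and the required inequality reduces to
\[
\frac{|A+B+2U|}{|A+B+U|} \leq \sqrt{\frac{|A+U|\,|B+U|}{|A|\,|B|}},
\]
which runs the wrong way for Pl\"unnecke--Ruzsa: iterated $+U$-expansion grows at least geometrically, so the left-hand side is typically large, not small. Any proof will therefore need either a structural description of the $\beta$-minimizers or an analytic compression/convexity argument in the spirit of the discrete Pr\'ekopa--Leindler inequality of Theorem~\ref{mainthm}.
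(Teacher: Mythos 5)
This is labelled a Conjecture in the paper and remains unproved there; what the paper does establish are several of the component inequalities, and your proposal recovers most of those, so the right comparison is against those partial results and the paper's own commentary on what is open.

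Your dispatch of the monotone chains and of $\beta \leq \alpha^2$ via Pl\"unnecke is correct and matches the paper's route (the paper uses the subset form, Lemma~\ref{Plu}, to handle $|A'+2B|$, but the symmetric Pl\"unnecke--Ruzsa form you cite works equally well). You are also right that the equalities $\alpha = \alpha' = \alpha''$ and $\beta = \beta' = \beta''$ are the genuinely open content; the paper's only progress on those is conditional: assuming multiplicativity (Conjecture~\ref{mult}) for $\alpha'$ one gets $\alpha = \alpha'$, and similarly multiplicativity for $\beta'$ gives $\beta = \beta'$, both via the tensor-power trick. So your instinct that tensorization is the relevant tool is in line with the paper, though the paper channels it through the multiplicativity conjecture for sets rather than the functional machinery of Part~II.

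The one concrete gap is your assessment of $\alpha(U) \leq \beta(U)$. You flag it as the hardest step and claim the natural substitution $A^* = A+U$, $B^* = B+U$ forces an inequality that ``runs the wrong way.'' In fact the paper proves $\alpha \leq \beta$ unconditionally by exactly that substitution, and the missing ingredient is not structural classification of minimizers but Petridis's lemma (Lemma~\ref{Pet}) applied to a near-minimizer $(A,B)$ for $\beta$ that satisfies the Petridis minimality property \eqref{eq:minimal_prop}. With $X = A$, $Y = B+V$, $Z = V$, Petridis gives
\[
|A+B+V+V|\,|A| \;\leq\; |A+B+V|\,|A+V|,
\]
i.e.\ precisely the ``wrong-way'' bound you thought was unavailable: the $+V$-expansion of $A+B+V$ is controlled by the $+V$-expansion of $A$ alone. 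Multiplying this with its $A\leftrightarrow B$ symmetric version yields
\[
\frac{|(A+V)+(B+V)|}{|A+V|^{1/2}|B+V|^{1/2}} \;\leq\; \frac{|A+B+V|}{|A|^{1/2}|B|^{1/2}},
\]
and since $A+V, B+V \supseteq V$ this gives $\alpha(V) \leq \beta(V)$. So your structural instinct about the minimizers was right, but the needed property is the Petridis minimality condition, not a Pr\'ekopa--Leindler-type compression. You should incorporate this; it closes the one inequality you marked as hardest and leaves only the cluster equalities, which genuinely are conjectural.
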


We list some properties.
\begin{St}[Basic Inequalities]\label{basicprops}
  Let $V$ be a finite set in a commutative group, $G$, $|V| =n$, $\dim V =d$. We have
   %%% \[ \frac{d}{2}+1 \leq  -- we don't prove this%%%%
   \[ \alpha(V) \leq \alpha'(V) \leq \alpha''(V)
   \begin{cases}
     < 2^d, \cr
     \leq (n+1)/2,
   \end{cases}   \]
  \[   d+1 \leq\beta(V) \leq \beta'(V) \leq \beta''(V)
\begin{cases}
     \leq  2^d, \cr
     \leq n.
   \end{cases}   \]
\end{St}

\begin{proof}
The inequalities 
$$  \alpha(V) \leq \alpha'(V) \leq \alpha''(V), \ \ \ \beta(V) \leq \beta'(V) \leq \beta''(V),$$
follow immediately from the definitions.
Taking $A= V$ in the definition of $\alpha''(V)$, we have
$$\alpha''(V) \leq \frac{|V+V|}{|V|} \leq \frac{1}{|V|} \binom{|V|+1}{2} = \frac{n+1}{2}.$$
 Taking $A = \{0\}$ in the definition of $\beta''(V)$, we find that 
 $$\beta''(V) \leq |V| =n.$$
Since $V$ has dimension $d$, we may assume
$$V \subset H' \times \mathbb{Z}^d.$$
Thus for large enough $N$, we have $V\subset A$, where 
$$A : = H' \times \{-N , \ldots , N\}^d.$$
Since 
$$\frac{|A+A|}{|A|} \to 2^d \ \ \ \text{as}  \ N \to \infty,$$
 we find that $\alpha''(V) \leq 2^d$.
 Also, 
 $$A+A+V \subset H' \times \{-2N - \max_{v \in V} |v|_{\infty} , \ldots ,  2N+\max_{v \in V} |v|_{\infty}\}^d,$$
 and so 
 $$\beta''(V) \leq \frac{|A+A+V|}{|A|} \to 2^d \ \ \ \text{as} \ N\to \infty.$$

Note that $\beta(V) \geq d+1$ follows from the more general Theorem~\ref{thm:quasicube_beta} which we prove later. 

\end{proof}

\tous{For $\alpha''$, I (George) do not see how to circumvent torsion. Even if there is no torsion, proving this seems to be quite tricky, requiring some delicate analysis of Gardner-Gronchi. I can do it for $d=2$, and very likely if $|A|$ and $|B|$ are large enough, the problem is checking when $|A|$ and $|B|$ are relatively small (compared to $d$).}

\begin{St}[Basic Inequalities II]
  For every finite set $U$ in any commutative group we have
\begin{equation}\label{alfabeta}
\alpha(U) \leq \beta(U), \ \alpha'(U) \leq 4\beta'(U),   \ \alpha''(U) \leq 3\beta''(U),  \end{equation}
\begin{equation}\label{betaalfa}
  \beta(U) \leq \alpha(U)^2,  \ \beta''(U) \leq \alpha(U)^3,  \end{equation}
 \begin{equation}\label{aabb}
 \alpha''(U) \leq \alpha(U)^2, \ \beta''(U) \leq \beta''(2U) \leq \beta(U)^ 2. \end{equation}
\end{St}

\begin{proof}
We may assume $U \subset G = H' \times \mathbb{Z}^d$. If $d=0$, then all
   \[ 1 = \alpha(U) = \alpha'(U) = \alpha''(U) = \beta(U) = \beta'(U) = \beta''(U) ,\] 
so we may assume $d \geq 1$. 

We first show the second  inequality in \eq{alfabeta}. Let $A,B$ be such that $|A|= |B|$ and let $k$ be a large integer. Since $d\geq 1$, we may choose a $x\in G$
be such that the sets 
$$A+x,  \ldots, A+kx,$$ are disjoint,
and 
 $$B+x \ldots, B+kx,$$
 are also disjoint.   Put
 \[ A' = U \cup \bigcup_{i=1}^k (A+ix), \   B' = U \cup \bigcup_{i=1}^k (B+ix) . \]
 These sets satisfy 
 $$U\subset A',B' \ \ \ \text{and} \ \ \  |A'|= |B'| \geq k |A|.$$ We have
\begin{equation}\label{uniok}
 A'+B' = (U+U) \cup \bigcup_{i=1}^k (A+U+ix)  \cup \bigcup_{i=1}^k (B+U+ix)  \cup \bigcup_{i=2}^{2k} (A+B+ix) .  \end{equation}
  As $|A+U|, |B+U|, |A+B|$ are all smaller than $|A+B+U|$, we find
  \[ |  A'+B'| \leq |U+U| + (4k-1)|A+B+U|.    \]
  Thus
   \[  \frac{ |  A'+B'|}{ |A'|} \leq \frac{ |U+U|}{ k |A|} + \zj{4-\frac{1}{k}} \frac{|A+B+U|}{|A|}.\]
As this is true for any $A$ and $B$ with $|A| = |B|$, we find
$$\alpha'(U) \leq \frac{|U+U|}{k} + 4 \beta(U),$$
and the second statement of \eqref{alfabeta} follows from letting $k \to \infty$.

The proof of the third statement of \eqref{alfabeta}, that is  $\alpha''(U) \leq 3\beta''(U)$, proceeds similarly. The only difference is we take $A=B$ so some parts of \eq{uniok} coincide
and the 4 is reduced to 3.

We could use the same approach to show $\alpha(U) \leq 4\beta(U)$. The proof below due to Thomas Bloom allows one to get rid of the factor. We need the following from \cite{Petridis}.

\begin{Lemma}[Petridis]\label{Pet} 
Let $X,Y,Z$ be finite subsets of a commutative group with the property that for all $X' \subset X$, we have 
$$\frac{|X'+Y|}{|X'|} \geq \frac{|X+Y|}{|X|}.$$
Then 
$$|X+Y+Z||X|\leq |X+Y||X+Z|.$$
\end{Lemma}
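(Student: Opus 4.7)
The plan is to set $K = |X+Y|/|X|$, so the hypothesis reads $|X'+Y| \geq K|X'|$ for every $X' \subset X$, and the desired conclusion becomes $|X+Y+Z| \leq K |X+Z|$. I would then induct on $|Z|$. The base case $|Z|=1$, say $Z = \{z\}$, is trivial: both sides equal $K|X|$.

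For the inductive step, write $Z = Z' \cup \{z\}$ with $z \notin Z'$, and consider the decompositions
\[ X+Z = (X+Z') \cup (X+z), \qquad X+Y+Z = (X+Y+Z') \cup (X+Y+z). \]
The key device is to introduce the \emph{overlap set} on the $X+Z$ side, namely
\[ X' := \{ x \in X : x + z \in X + Z'\}. \]
By construction $(X+Z') \cap (X+z) = X' + z$, so inclusion-exclusion gives $|X+Z| = |X+Z'| + |X| - |X'|$. The second observation, which is the heart of the argument, is that for each $x \in X'$ we can write $x+z = x_0+z_0$ with $x_0 \in X$ and $z_0 \in Z'$, and therefore $x+Y+z = x_0+Y+z_0 \subset X+Y+Z'$. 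Hence $X'+Y+z$ lies in the intersection $(X+Y+Z') \cap (X+Y+z)$, and so
\[ |(X+Y+Z') \cap (X+Y+z)| \;\geq\; |X'+Y| \;\geq\; K|X'|, \]
where the last inequality uses the Plünnecke-type hypothesis applied to the subset $X' \subset X$.

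Combining inclusion-exclusion on the $X+Y+Z$ side with the induction hypothesis $|X+Y+Z'| \leq K|X+Z'|$, we obtain
\[ |X+Y+Z| \leq |X+Y+Z'| + |X+Y+z| - K|X'| \leq K|X+Z'| + K|X| - K|X'| = K|X+Z|, \]
completing the induction.

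The one point requiring care, and the only conceptually non-routine step, is recognizing that the correct set to which one applies the Plünnecke hypothesis is $X'$, determined purely from the geometry of $X+Z$ (i.e.\ from when translates $x+z$ already lie in $X+Z'$), rather than from the $Y$-translates. Once that definition is in place, the two inclusion-exclusion computations match up exactly because the same quantity $|X'|$ controls both overlaps, and the Plünnecke bound on $|X'+Y|$ is what links them.
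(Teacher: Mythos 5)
Your proof is correct, and it is precisely Petridis's original argument: induction on $|Z|$, with the key step of defining the overlap set $X' = \{x \in X : x+z \in X+Z'\}$ from the $Z$-side geometry, noting that $X'+Y+z$ sits inside both $X+Y+Z'$ and $X+Y+z$, and applying the hypothesis to $X'$ to bound the overlap from below. The paper does not reprove this lemma; it simply cites \cite{Petridis}, so there is no in-paper proof to compare against, but your reconstruction matches the standard reference proof.
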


Let $A,B \subset G$ be finite with the property that for any $A' \subset A$ and $B'\subset B$
\begin{equation} \label{eq:minimal_prop}
\frac{|A+B+V|}{|A|^{1/2}|B|^{1/2}} \leq \frac{|A'+B+V|}{|A'|^{1/2}|B|^{1/2}}, \ \ \ \frac{|A+B+V|}{|A|^{1/2}|B|^{1/2}} \leq \frac{|A+B'+V|}{|A|^{1/2}|B'|^{1/2}}.
\end{equation}
By a standard limiting argument we may assume WLOG that the infimum in the definition of $\beta(V)$ is taken over $A, B$ satisfying (\ref{eq:minimal_prop}). This implies in particular that for any $A' \subset A$
$$
\frac{|A+B+V|}{|A|} \leq \frac{|A'+B+V|}{|A'|}.
$$
Applying Lemma~\ref{Pet} with $X  = A$, $Y  = B+V$ and $Z =V$, we conclude
$$|A+B+V+V| |A| \leq |A+B+V| |A+V|,$$ 
and rearranging gives
$$\frac{|A+V+B+V|}{|A+V|^{1/2} |B+V|^{1/2}} \left( \frac{|B+V|^{1/2}|A|^{1/2}}{|B|^{1/2}|A+V|^{1/2}}\right) \leq \frac{|A+B+V|}{|A|^{1/2}{|B|^{1/2}}}.$$ Applying with the roles of $A$ and $B$ swapped, we also find  
$$\frac{|A+V+B+V|}{|A+V|^{1/2} |B+V|^{1/2}} \left( \frac{|A+V|^{1/2}|B|^{1/2}}{|A|^{1/2}|B+V|^{1/2}}\right) \leq \frac{|A+B+V|}{|A|^{1/2}{|B|^{1/2}}},$$
and so
$$\frac{|A+V+B+V|}{|A+V|^{1/2} |B+V|^{1/2}} \leq \frac{|A+B+V|}{|A|^{1/2}{|B|^{1/2}}}.$$
Thus we conclude that 
$$\alpha(V) \leq \beta(V).$$

For \eq{betaalfa} and \eq{aabb}  we need Pl\"unnecke's inequality.

\begin{Lemma}[Pl\"unnecke]\label{Plu}
Let $X$ and $Y$ be subsets of a commutative group. Let $k$ be a positive integer and  $|X+Y| = c|X|$. Then there is a $X'\subset X$ such that 
$$|X'+kY| \leq c^k |X'| .$$
\end{Lemma}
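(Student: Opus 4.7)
The plan is to prove Pl\"unnecke's inequality by the well-known Petridis method, leveraging Lemma~\ref{Pet} (Petridis) which has already been stated in the excerpt. First, I would pick $X' \subset X$ to be a nonempty subset minimizing the ratio $|X'+Y|/|X'|$, and denote this minimum value by $K$. By the minimality assumption at $X$ itself, $K \leq c$, and in particular $|X'+Y| \leq K|X'| \leq c|X'|$. The whole argument will then produce the bound $|X'+kY| \leq K^k |X'|$, from which $|X'+kY| \leq c^k |X'|$ follows immediately.

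The key observation is that by construction $X'$ satisfies the hypothesis of Lemma~\ref{Pet}: for any $X'' \subset X'$, $|X''+Y|/|X''| \geq K = |X'+Y|/|X'|$. Hence Lemma~\ref{Pet} applies with this fixed $X'$ playing the role of $X$, with the same $Y$, and with arbitrary $Z$, yielding
\[ |X'+Y+Z|\,|X'| \leq |X'+Y|\,|X'+Z| \leq K |X'|\, |X'+Z|. \]
Dividing by $|X'|^2$ gives the telescoping inequality
\[ \frac{|X'+Y+Z|}{|X'|} \leq K \cdot \frac{|X'+Z|}{|X'|}. \]

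The main step is now a straightforward induction on $k$. The base case $k=1$ is the definition of $K$. For the inductive step, assume $|X'+kY| \leq K^k|X'|$ and apply the inequality above with $Z = kY$ to obtain
\[ |X'+(k+1)Y| \leq K \cdot |X'+kY| \leq K^{k+1}|X'|. \]
Since $K \leq c$, this gives $|X'+kY| \leq c^k |X'|$ for every $k$, completing the proof. I do not foresee any serious obstacle: the entire argument is packaged inside Lemma~\ref{Pet}, and the only work is to notice that a ratio-minimizing subset $X'$ satisfies its hypothesis, then iterate. The minor point worth checking is that $X'$ does not need to depend on $k$ — it is chosen once, purely from the pair $(X,Y)$, and then the same $X'$ works simultaneously for all $k$.
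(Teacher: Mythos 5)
Your proof is correct. The paper itself states Lemma~\ref{Plu} without proof, treating Pl\"unnecke's inequality as a known result; your argument is the standard modern (Petridis) derivation, and it correctly exploits Lemma~\ref{Pet}, which the paper has already stated. In particular you are right that $X'$ is chosen once, from $(X,Y)$ alone, and then the hypothesis of Lemma~\ref{Pet} is automatically satisfied (any nonempty $X'' \subset X'$ is also a subset of $X$, so $|X''+Y|/|X''| \geq K$), so the same $X'$ serves for every $k$ in the induction. The only point worth stating explicitly is that the minimum of $|X''+Y|/|X''|$ over nonempty $X'' \subset X$ exists because $X$ is finite, and that $K \leq c$ because $X$ itself is among the candidates.
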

In particular,
\begin{Lemma}[Pl\"unnecke] \label{Plu_alt}
  Let $X, Y$ be finite sets of an additive group. Then there is $X' \subset X$ such that
  $$
  |Y+Y| \leq |X' + Y + Y| \leq |X'| \frac{|X+Y|^2}{|X|^2} \leq \frac{|X+Y|^2}{|X|}
  $$
\end{Lemma}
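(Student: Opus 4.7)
The plan is to recognize that Lemma~\ref{Plu_alt} is essentially the $k=2$ case of Lemma~\ref{Plu}, together with two trivial bounds, so I would not prove anything new but rather unpack the chain of inequalities.

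First I would set $c = |X+Y|/|X|$ and apply Lemma~\ref{Plu} with $k=2$ to obtain a subset $X' \subset X$ satisfying
\[
|X' + Y + Y| = |X' + 2Y| \leq c^2 |X'| = |X'| \frac{|X+Y|^2}{|X|^2},
\]
which is the middle inequality. For the rightmost inequality, I would just use $|X'| \leq |X|$, which gives $|X'|/|X|^2 \leq 1/|X|$ and hence the stated bound. For the leftmost inequality $|Y+Y| \leq |X'+Y+Y|$, I would pick any single element $x' \in X'$ (noting $X'$ is nonempty, as otherwise the middle bound forces $|X+Y| = 0$, i.e., $X$ or $Y$ empty, making the lemma vacuous) and observe that the translate $x' + Y + Y$ is contained in $X' + Y + Y$, so $|Y+Y| = |x' + Y + Y| \leq |X' + Y + Y|$.

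There is no real obstacle here: the only slightly subtle point is that Lemma~\ref{Plu} is being quoted as a black box, and one must check that the subset $X'$ produced by it is the same $X'$ one can use for all three inequalities — but since the left and right inequalities hold for \emph{any} nonempty $X' \subset X$, this is immediate.
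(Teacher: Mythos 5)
Your proof is correct and matches exactly what the paper intends: the paper introduces Lemma~\ref{Plu_alt} with the phrase ``In particular'' after Lemma~\ref{Plu}, meaning it is the $k=2$ case plus the trivial observations $|X'|\leq|X|$ and $|Y+Y|\leq|X'+Y+Y|$, which is precisely your argument.
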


We now proceed to \eqref{betaalfa}. Let $A, B$ be any sets containing $U$.  After swapping the roles of $A$ and $B$, we may suppose $|A| \geq |B| $. By Lemma~\ref{Plu}, there is an $A'\subset A$ such that
 \[ |A'+2B| \leq \zj{ \frac{|A+B|}{|A| }}^2 |A'| . \]
 As $A'+B+U \subset A'+2B $, we conclude
  \[ \beta(U) \leq \frac{|A'+B+U|}{\sqrt{|A'| |B| }}\leq \sqrt{\frac{|A'|}{|A|}}\frac{|A+B|^2}{|A| |B|} \leq  \frac{|A+B|^2}{|A| |B|}. \]
As $A$ and $B$ are arbitrary, we conclude 
$$\beta(U) \leq \alpha(U)^2.$$

We approach the first inequality in \eqref{aabb} similarly. Let $A$ and $B$ be arbitrary sets containing $U$.
Then by Lemma~\ref{Plu_alt}
$$
\frac{|B+B|}{|B|} \leq  \frac{|A+B|^2}{|A| |B|},
$$
and $\alpha''(U) \leq \alpha(U)^2$ follows. 

We now proceed to the second statement of \eqref{betaalfa}. Let $A$ and $B$ be sets containing $U$ with $|B| \leq |A|$. 
By Lemma~\ref{Plu} we may find an $A'\subset A$ such that
 \[|B+B+U| \leq  |A'+3B| \leq \zj{ \frac{|A+B|}{|A| }}^3 |A| . \]
Dividing both sides by $|B|$ and using $|B| \leq |A|$ gives $\beta''(U) \leq \alpha(U)^3$.

We now proceed to the second statements of \eqref{aabb}. First, $\beta''(U) \leq \beta''(2U)$ follows immediately from the definitions.  Let  $A,B$ be arbitrary with $|B| \leq |A|$. We find, by Lemma~\ref{Plu}, a $B'\subset B$ such that
 \[ | B' + 2(A+U)| \leq \zj{\frac{|A+B+U| }{   |B|}}^2 |B'|, \]
   and hence
    \[ \frac{ |2A+2U|}{|A|} \leq \frac{|A+B+U|^2 }{  |A| |B|} \]
    and so $\beta''(2U) \leq \beta(U)^2$ follows.
\end{proof}

\tous {This is what easily follows from Pl\"unnecke. Improvements welcome.}

       \begin{Prob}
         How tight are these inequalities? For the discrete cube $K_d=\{0,1 \}^d$
we have $\beta(K_d)=2^d$, $2^{d/2} \leq \alpha(K_d) \leq (3/2)^d$, so $\beta \leq \alpha^2$ is pretty tight, the exponent is definitely not lower than $ \log 2 / \log (3/2)$. 
       \end{Prob}

\section{The independence problem}

In the preceding sections we tacitly assumed that the ambient group $G$ is fixed, and the
sets $A,B$ in the definition of the $\alpha$'s and $\beta$'s are taken from this group. Sometimes
we shall consider different groups, and the possibility of dependence arises.

For this section we change the notations to $\alpha(U, G)$, to indicate the ambient group (and similarly for all other parameters). 

\begin{Conj}[The independence hypothesis]\label{ind}
Let $G$ be a group, $G'$ its subgroup, $U\subset G'$ and let $\vartheta$ be any of the functionals
$\alpha, \alpha', \alpha'', \beta, \beta', \beta''$. We have
 \[ \vartheta(U, G) = \vartheta(U, G'). \]  
\end{Conj}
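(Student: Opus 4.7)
The plan is to tackle the two directions separately. The inequality $\vartheta(U,G) \leq \vartheta(U,G')$ is immediate for all six functionals, as any admissible pair in $G'$ remains admissible in $G$ and the infimum can only decrease when the family over which it is taken is enlarged. All the substantive work lies in the reverse inequality $\vartheta(U,G) \geq \vartheta(U,G')$, which asks that every configuration in $G$ can be matched (or beaten) by a configuration inside $G'$.

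For the tripling family $\beta, \beta', \beta''$ I would proceed by a fibre decomposition along the cosets of $G'$ in $G$. Let $\pi\colon G \to G/G'$ denote the quotient map and, for finite $A, B \subset G$, set $A_g = A \cap \pi^{-1}(g)$. Fixing a choice of coset representatives, each fibre admits an untwisted copy $A_g^\circ \subset G'$. Because $U \subset G'$, the piece $(A + B + U) \cap \pi^{-1}(t)$ is a union over $g + h = t$ of shifted copies of $A_g^\circ + B_h^\circ + U \subset G'$, so
\[
  |A+B+U| \;\geq\; \sum_{t \in G/G'} \max_{g+h=t} |A_g^\circ + B_h^\circ + U|.
\]
Applying the definition of $\beta(U,G')$ to each non-empty fibre pair reduces the claim to the auxiliary, discrete Pr\'ekopa--Leindler-type inequality
\[
  \sum_{t \in G/G'} \max_{g+h=t} \sqrt{|A_g^\circ|\,|B_h^\circ|} \;\geq\; \sqrt{|A|\,|B|}.
\]
My plan for this lemma is tensorisation in the spirit of Part~II of the paper: first handle the one-dimensional case ($G/G'$ cyclic, including $\mathbb{Z}$) via a layer-cake argument reducing to the trivial bound $|A+B| \geq \max(|A|,|B|)$ applied to the level sets of $a,b$; then iterate over the cyclic factors, combining a one-dimensional application to the outer sum with the obvious rearrangement $\sum_t \max_{g+g'=s} c_{g,g'}(t) \geq \max_{g+g'=s} \sum_t c_{g,g'}(t)$ and the fibre-wise hypothesis. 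Granted the lemma, $\beta(U,G) \geq \beta(U,G')$ is immediate; $\beta'$ comes by pre-balancing fibre cardinalities with translated copies as in the proof of~\eqref{alfabeta}, and $\beta''$ by specialising to $A = B$.

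The $\alpha$ family is the genuine obstacle. The same fibre decomposition yields $|A+B| \geq \sum_t \max_{g+h=t} |A_g^\circ + B_h^\circ|$, but the defining bound $|A_g^\circ + B_h^\circ| \geq \alpha(U,G') \sqrt{|A_g^\circ||B_h^\circ|}$ is available only when both fibres contain $U$, which forces $g = h = 0$. The concentrated regime (most of the mass of $A,B$ living in the zero coset) is handled by the zero-coset bound alone, but the opposite regime seems to demand a different construction. One attempt: assume $G \cong G' \oplus H$ (available modulo torsion) and use the projection $\pi'\colon G \to G'$ to form $A^* = \pi'(A), B^* = \pi'(B) \subset G'$, which still contain $U$ and satisfy $|A^* + B^*| \leq |A+B|$; the hard step is then a Pl\"unnecke-type comparison of the form
\[
  |A^*+B^*|^2\,|A|\,|B| \;\leq\; |A+B|^2\,|A^*|\,|B^*|,
\]
or an interpolation between the zero-coset bound and the projection bound. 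A cleaner conditional route is to first establish Conjecture~\ref{dct}: granted $\alpha = \beta$, independence for $\alpha$ reduces to independence for $\beta$. The variants $\alpha', \alpha''$ should then fall out by imposing the appropriate cardinality or coincidence constraint throughout the construction.
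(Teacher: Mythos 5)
You are attempting to prove a statement that the paper itself labels a \emph{conjecture} and leaves open. The authors explicitly remark immediately after Conjecture~\ref{ind} that they cannot answer it even in the special case $G=\setZ^d$, $U\subset p\cdot\setZ^d$, and that ``the only case where we can show this in generality is for $\beta$.'' The paper's actual content on this front is Theorem~\ref{th:betaind} (independence for $\beta$ alone) and Statement~\ref{tfindep} (all functionals when $G=G'\times H$ with $H$ torsion-free, a much weaker hypothesis than $G'\leq G$).

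Your $\beta$ argument is correct and is essentially the paper's argument. The inequality you isolate,
\[ \sum_{t\in G/G'}\max_{g+h=t}\sqrt{|A_g^\circ|\,|B_h^\circ|}\ \geq\ \sqrt{|A|\,|B|}, \]
is exactly what the paper proves, but much more cheaply than your tensorisation plan: take the largest $A$-fiber $A_1$ and the largest $B$-fiber $B_1$, observe that the two lower bounds
\[|A+B+U|\geq\beta(U,G')\sqrt{|A_1|}\sum_j\sqrt{|B_j|},\qquad |A+B+U|\geq\beta(U,G')\sqrt{|B_1|}\sum_i\sqrt{|A_i|},\]
hold, take their geometric mean, and apply Cauchy--Schwarz in the form $\sum x_i^2\leq(\max x_i)\sum x_i$ to each family. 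This is a two-line proof and renders your proposed layer-cake-plus-tensorisation reduction unnecessary.

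The claims for $\beta'$ and $\beta''$ are gaps. Specialising to $A=B$ does not give $\beta''(U,G)\geq\beta''(U,G')$: for a coset index $t$, the fibers $A_{g}^\circ, A_{h}^\circ$ with $g+h=t$ are equal only when $g=h$, so off-diagonal pairs yield only the weaker bound $\beta(U,G')$, not $\beta''(U,G')$. Restricting to the diagonal terms $t=2g$ salvages this only when the doubling map on $G/G'$ is injective, i.e.\ $G/G'$ is $2$-torsion-free; the conjecture is stated for arbitrary commutative $G$. For $\beta'$, the ``pre-balancing with translated copies'' is not spelled out and does not obviously equalise fiber cardinalities over the cosets of $G'$ --- the construction in the proof of~\eqref{alfabeta} equalises the totals $|A'|$ and $|B'|$, not the individual fibers. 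The paper does not resolve $\beta'$ or $\beta''$ in generality either, so these are not missteps to be patched; they are genuinely open.

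For the $\alpha$ family you correctly identify the fiber decomposition as unusable (the $\alpha$-bound requires the constraint $A_g^\circ,B_h^\circ\supset U$, which only holds for the zero coset) and you flag the Pl\"unnecke comparison and the route via Conjecture~\ref{dct} as speculative. This is an accurate assessment: the paper's position is the same.
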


We cannot even answer Conjecture~\ref{ind} even in the following simple special case. Let $G=\setZ^d$, and assume that $U\subset p \cdot \mathbb{Z}^d$.
 Do we get the same values of $\alpha, \alpha', \alpha'',  \beta', \beta''$
if we restrict $A, B$ to be subsets of $p \cdot \mathbb{Z}^d$?

The only case where we can show this in generality is for $\beta$.

\begin{Th}[Independence for $\beta$]\label{th:betaind}
  Let $G$ be a group, $G'$ its subgroup, $U\subset G'$. We have
   \[ \beta(U, G) = \beta(U, G'). \]
\end{Th}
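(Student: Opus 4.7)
The plan is to prove the nontrivial direction $\beta(U,G) \geq \beta(U,G')$; the reverse $\beta(U,G) \leq \beta(U,G')$ is immediate from definitions, since the infimum for $\beta(U,G)$ is over a larger class of pairs. Given $A, B \subset G$, I would first decompose them by cosets of $G'$: for each $\bar a \in G/G'$, pick a representative and set $A^{\bar a} := (A \cap (G'+\bar a)) - \bar a \subset G'$, and similarly $B^{\bar b}$. The crucial consequence of $U \subset G'$ is that each piece $A^{\bar a}+B^{\bar b}+U$ lies in the single coset $G' + \bar a + \bar b$, so contributions to $A+B+U$ from different values of $\bar a + \bar b$ in $G/G'$ land in disjoint cosets. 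This gives
\[
|A+B+U| \;=\; \sum_{\bar c \in G/G'} \Big|\bigcup_{\bar a+\bar b=\bar c}(A^{\bar a}+B^{\bar b}+U)\Big| \;\geq\; \sum_{\bar c}\max_{\bar a+\bar b=\bar c}|A^{\bar a}+B^{\bar b}+U|.
\]
Since each $(A^{\bar a},B^{\bar b})$ now sits inside $G'$, applying the definition of $\beta(U,G')$ to each nonempty fiber replaces $|A^{\bar a}+B^{\bar b}+U|$ by $\beta(U,G')\sqrt{|A^{\bar a}|\,|B^{\bar b}|}$, and the theorem reduces to the single combinatorial inequality
\[
\sum_{\bar c \in G/G'} \max_{\bar a+\bar b=\bar c} \sqrt{|A^{\bar a}|\,|B^{\bar b}|} \;\geq\; \sqrt{|A|\,|B|}.
\]

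I would prove this by abstracting it to a discrete max-convolution norm bound on the abelian group $H = G/G'$: for any nonnegative finitely supported $f, g$ on $H$ and $h(z) := \max_{x+y=z} f(x)g(y)$, one has $\|h\|_1 \geq \|f\|_2 \|g\|_2$. The plan to prove this lemma is a short translation trick. Pick $x_0$ with $f(x_0) = \|f\|_\infty$; then $h(z) \geq f(x_0) g(z - x_0)$ for every $z$, so summing over $z$ yields $\|h\|_1 \geq \|f\|_\infty \|g\|_1$; by symmetry $\|h\|_1 \geq \|g\|_\infty \|f\|_1$. Multiplying the two bounds together with the elementary estimate $\|\phi\|_\infty \|\phi\|_1 \geq \|\phi\|_2^2$ (immediate from $\sum \phi^2 \leq \|\phi\|_\infty \sum \phi$) gives $\|h\|_1^2 \geq \|f\|_2^2 \|g\|_2^2$. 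Applying this lemma with $f(\bar a) = \sqrt{|A^{\bar a}|}$ and $g(\bar b) = \sqrt{|B^{\bar b}|}$ on $H = G/G'$ closes the reduction above.

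I expect the combinatorial inequality to be the main obstacle, because the fibered quantity on the left is a \emph{max} rather than a sum: naive attacks via sumset lower bounds such as $|X+Y| \geq |X|+|Y|-1$ combined with a layer-cake decomposition only give clean estimates when $G/G'$ is torsion-free, and even there seem to lose too much. The max-convolution lemma sidesteps this completely: the pointwise translation $h(\cdot) \geq f(x_0) g(\cdot - x_0)$ captures a full translate of $g$ at the peak value of $f$, and interpolating the resulting $L^\infty$--$L^1$ bounds is exactly sharp enough to recover $\|f\|_2\|g\|_2$, with the argument working uniformly over any abelian quotient $G/G'$, torsion or not.
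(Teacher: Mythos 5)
Your proof is correct and follows essentially the same strategy as the paper: decompose $A,B$ by cosets of $G'$, use that contributions to $A+B+U$ from distinct cosets of $G/G'$ are disjoint (since $U\subset G'$), apply $\beta(U,G')$ fiber by fiber, and then close with the estimate $\norm{\phi}_\infty\norm{\phi}_1\ge\norm{\phi}_2^2$ applied symmetrically to the two coset-weight sequences. Your repackaging of the endgame as a max-convolution $L^1$--$L^2$ bound on $G/G'$ is a tidy abstraction of the paper's ``take the largest fiber, symmetrize, form the geometric mean, apply H\"older'' calculation, but the two arguments are step-for-step the same.
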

\begin{proof}
  Take $A,B\subset G$ and split them according to cosets of $G'$, say
   \[ A= \bigcup A_i, \ B = \bigcup B_j . \]
   Assume that $A_1$ is the largest of the $A_i$ and similarly for $B$. The sets $A_1+B_j+U$ are disjoint (as $j$ varies), and hence
    \[ |A+B+U| \geq \sum_j |A_1+B_j+U| \geq  \beta(U, G') \sqrt{|A_1|}  \sum_j \sqrt{ |B_j|}. \]
By symmetry of $A$ and $B$, 
\[ |A+B+U| \geq   \beta(U, G') \sqrt{|B_1|}  \sum_i \sqrt{ |A_i|}. \]
Forming the geometric mean of the above two inequalities and using H\" older of the form
 \[ \sum x_i^2 \leq (\max x_i) \sum x_i\]
separately for the numbers $|A_i|$ and $|B_j|$, we obtain the desired result
\end{proof}
  
An important special case  is easily seen.

\begin{St}[Cartesian products with torsion]\label{tfindep}
  Let $G$ be a group, $G=G' \times H$ with $H$ torsion-free, $U\subset G'$ and let $\vartheta$ be any of the functionals
$\alpha, \alpha', \alpha'',  \beta', \beta''$. We have
 \[ \vartheta(U, G) = \vartheta(U, G'). \] 
\end{St}

In particular this implies that by embedding $\setZ^d$ into $\setZ^k$ with $k>d$ these values do not change.

\begin{proof}
  If $G' $ is a torsion group, then all these functionals have value 1. Assume this is not the case,
and fix a $g\in G'$ of infinite order.

  Take $A, B\subset G$. We are going to construct $A', B'\subset G'$ such that
   \[ |A'+B'| = |A+B|, \ |A'+B'+U| = |A+B+U|,  \]
and the restrictions used to define $\alpha, \alpha', \alpha'',  \beta', \beta''$ are preserved.

Let $H'$ be the subgroup of $H$ generated by the elements in the $H$-projection of $A\cup B$. Since $H$ is torsion-free, we have $H' \cong \setZ^d$ for some $d$. Let 
$e_1, \ldots, e_d$ be a system of generators for $H'$. For fixed integers $m_1, \ldots, m_d$ (to be chosen later) define a homomorphism $\varphi: G' \times H' \to G'$ by
 \[ \varphi(x, y_1e_1 + \ldots + y_de_d) = x+ (m_1y_1 + \ldots + m_dy_d) g .\]
Put $A'=\varphi(A)$, $B'= \varphi(B)$. It is clear that for $m_1, \ldots, m_d$ large enough (and dependent on $A, B, U$), $\varphi$ is one-to-one on $A, B, A+B, A+B+U$ and the claim follows.
\end{proof}

\tous{Probably the assumption can be weakened to that the factor-group $G/G'$ is torsion-free. This
may even not be more general. It may be true that if $G$ is finitely generated (which we can safely assume)
and $G/G'$ is torsion-free, then $G'$ is a direct factor.}

\section{Torsion}

The presence of torsion is the source of difficulties. We conjecture it should not matter much.
 
\begin{Conj}
  Let  $G$ be a group, $H$ its torsion subgroup, $G'=G/H$ the factor group, $\varphi: G \to G'$ the natural homomorphism,
 $U\subset G$, $U'=\varphi(U)\subset G'$ and let $\vartheta$ be any of the functionals
 $\alpha, \alpha', \alpha'', \beta', \beta''$. We have
  \[ \vartheta(U') = \vartheta(U) . \]
\end{Conj}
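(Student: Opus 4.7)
The plan is to prove $\vartheta(U,G)=\vartheta(U',G')$ as a pair of opposite inequalities, after a preliminary reduction: since every witness $A, B$ is finite, we may replace $G$ by the subgroup it spans together with $U$, which is finitely generated abelian. By the structure theorem this subgroup is isomorphic to $\setZ^d \times H_0$ for some finite torsion $H_0$, with $\varphi$ the projection onto the $\setZ^d$-factor; we identify $G'$ with $\setZ^d$.

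For $\vartheta(U,G) \leq \vartheta(U',G')$ the idea is a full-torsion lift. Given near-optimal $A', B' \subset G'$ for $\vartheta(U',G')$, set $A := A' \times H_0$ and $B := B' \times H_0 \subset G$. Each defining constraint transfers: $A \supset U$ iff $A' \supset U'$, $|A|=|B|$ iff $|A'|=|B'|$, and $A=B$ iff $A'=B'$, so all five variants in the statement are handled uniformly. Using $H_0+H_0=H_0$, a direct calculation gives $A+B+U = (A'+B'+U') \times H_0$, hence $|A+B+U| = |H_0| \cdot |A'+B'+U'|$ and $|A||B| = |H_0|^2 |A'||B'|$, so the ratios defining $\alpha^{(\cdot)}$ and $\beta^{(\cdot)}$ coincide.

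For the reverse $\vartheta(U,G) \geq \vartheta(U',G')$, the strategy is to project competitors $A, B \subset G$ to $A' = \varphi(A)$, $B' = \varphi(B) \subset G'$ and show the ratio can only decrease. The containment $U \subset A, B$ descends to $U' \subset A', B'$, and the condition $A=B$ descends too. For $\alpha'$ and $\beta'$ the equality $|A|=|B|$ may fail to give $|A'|=|B'|$ after projection; since $G' \cong \setZ^d$ is infinite (the case $d=0$ being trivial), one may balance cardinalities by padding with a few far-apart elements of $G'$ at negligible cost to the sumset. The substantive sub-lemma one then needs is
\[
    \frac{|A+B+U|}{\sqrt{|A||B|}} \ \geq\ \frac{|A'+B'+U'|}{\sqrt{|A'||B'|}}.
\]
Writing $a_g = |A \cap \varphi^{-1}(g)|$ and similarly $b_g, c_g$, one has $|A| = \sum a_g$, $|A+B+U|=\sum c_g$, and for any $g \in A'+B'+U'$ and any decomposition $g = g_1+g_2+g_3$ the inequality $c_g \geq |A_{g_1}+B_{g_2}+U_{g_3}| \geq \max(a_{g_1}, b_{g_2}) \geq \sqrt{a_{g_1} b_{g_2}}$ holds, using $|X+Y| \geq \max(|X|,|Y|)$ inside each $H_0$-coset. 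Equality is attained on rectangular inputs $A = A' \times S$, $B = B' \times T$.

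The main obstacle is this sub-lemma: elementary approaches (Cauchy--Schwarz, max-fiber bounds, averaging) combine the pointwise bounds on $c_g$ too loosely and do not recover the global inequality. I expect the real content is either a compression/rearrangement argument pushing $A, B$ towards rectangular form while non-increasing the ratio, or an iterated-sumset argument exploiting that the asymptotic growth rate $|kA|^{1/k}$ is insensitive to the finite torsion factor. Designing a compression that simultaneously respects the fiber decomposition, the triple sumset, and the various size constraints is precisely what makes this statement appear here as a conjecture rather than a theorem.
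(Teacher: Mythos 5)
This statement is labelled a \emph{Conjecture} in the paper; the paper supplies no proof of it (only a Remark observing that the case of $\beta$---which is not even in the list---follows from Statement~\ref{st:homomorphism} combined with supermultiplicativity, Theorem~\ref{Th:beta-mult}). So there is no paper proof to match against, and the right question is whether you have a new proof.

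You do not, and to your credit you say so plainly. Your easy direction $\vartheta(U,G)\le\vartheta(U',G')$ is correct in substance and coincides with the paper's Statement~\ref{st:homomorphism}: lift $A',B'$ to full fibres over a finite torsion group $H_0$ (after restricting to a finitely generated subgroup containing $U$ and lifts of $A'\cup B'$, so that the torsion involved is finite), and observe all constraints and the ratio transfer cleanly. One small caveat worth flagging: restricting the ambient group is harmless here because it can only increase $\vartheta(U,\cdot)$, so the reduction gives $\vartheta(U,G)\le\vartheta(U,\tilde G)\le\vartheta(U',G')$; this should be spelled out since the independence problem for these functionals is itself open.

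The hard direction is where the substance lies, and your candidate sub-lemma---that projecting $A\mapsto\varphi(A)$, $B\mapsto\varphi(B)$ can only decrease the $\alpha$- or $\beta$-ratio---is precisely what the paper warns against. Immediately after the Projection Conjecture the paper remarks that ``the natural approach of taking $A'=\varphi(A)$, $B'=\varphi(B)$ may not work even when $G=\mathbb{Z}^2$, $G'=\mathbb{Z}$.'' Your pointwise bound $c_g\ge\sqrt{a_{g_1}b_{g_2}}$ is correct fibre-by-fibre but, as you note, summing it does not recover the global inequality; the specific failure mode is that $|A'|,|B'|$ can be much smaller than $|A|,|B|$ while $|A+B+U|$ need not be proportionally larger than $|A'+B'+U'|$. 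The compression idea you float as the likely fix is exactly what the paper uses in the torsion-free case (Statement~\ref{compsum} and Theorem~\ref{compalphabeta}), but it relies on Cauchy--Davenport making each fibre sumset grow like $|X|+|Y|-1$, which fails once the fibres sit inside a finite torsion group. So you have correctly located the obstacle; what you propose is the known route in the torsion-free setting and there is no evident replacement for it in the torsion setting, which is why the statement remains a conjecture.
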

\begin{Rem}
    The case of $\beta$ follows from Statement~\ref{st:homomorphism} below and supermultiplicativity (Theorem~\ref{Th:beta-mult}) as $\beta$ is always at least 1.
\end{Rem}

\begin{St}[Projections and torsion]\label{st:homomorphism}
   Let  $G$ be a group, $H$ its torsion subgroup, $G'=G/H$ the factor group, $\varphi: G \to G'$ the natural homomorphism,
 $U\subset G$, $U'=\varphi(U)\subset G'$ and let $\vartheta$ be any of the functionals
 $\alpha, \alpha', \alpha'', \beta,  \beta', \beta''$. We have
  \[ \vartheta(U') \geq  \vartheta(U) . \]
\end{St}
\begin{proof}
    For concreteness, let us prove Statement~\ref{st:homomorphism} for the case of $\beta$, as for the other functionals the argument is similar. 
    
    For an arbitrary $\epsilon > 0$ there are $A', B' \subset G'$ such that 
    \[
        \frac{|A'+B'+U'|}{|A'|^{1/2}|B'|^{1/2}} \leq \beta(U') + \epsilon.
    \]
    WLOG we may assume $H$ is of finite order. Take $A := \phi^{-1}(A') $  and $B := \phi^{-1}(B')$, so that $|A| = |H||A'|$ and $|B| = |H||B'|$. At the same time clearly 
    \[
    |A+B+U| \leq |A'+B'+U'||H|,
    \]
    so
    \[
        \beta(U) \leq  \frac{|A+B+U|}{|A|^{1/2}|B|^{1/2}} \leq \beta(U') + \epsilon.
    \]
    The claim follows as $\epsilon$ can be taken arbitrarily close to zero.
\end{proof}

\begin{St}[The trivial lower bounds]\label{st:trivial}
  Let  $G$ be a group, $H$ its torsion subgroup, $U\subset G$. If $U$ is contained in a single coset of $H$, then
   \[ \alpha(U)= \alpha'(U)= \alpha''(U)= \beta(U)= \beta'(U)=\beta'(U)= 1 ,\]
   otherwise
    \[ \beta(U) \geq 2, \ \alpha(U) \geq 3/2. \]
\end{St}
\begin{proof}
    The statement is trivial when $U$ is contained in a coset of $H$.  
    
    Otherwise, we may assume WLOG that $U$ contains the union of $\{0\} \oplus U_0$ and $\{1\} \oplus U_1$ with some $U_0, U_1 \subset G/\setZ$.  We also write 
    \[
     A = \bigsqcup^N_{i =1} a_i \oplus A_i 
    \]
    and
    \[
        B = \bigsqcup^M_{j=1} b_j \oplus B_j 
    \]
 with $A_i, B_j \subset G/\setZ$ and some $N, M$, so that the integers $\{a_i\}$ and $\{b_j\}$ are monotone increasing. Then $|A+B+U|$ contains the disjoint union of 
    \begin{align*}
       &(a_1 + b_1)\oplus (A_1 + B_1 + U_0),\\
       &(a_1 + b_1 + 1) \oplus (A_1 + B_1 + U_1), \ldots, (a_N + b_1 + 1) \oplus (A_N + B_1+ U_1),\\  
       &(a_N + b_2 + 1) \oplus (A_N + B_2+ U_1), \ldots, (a_N + b_M + 1) \oplus (A_N + B_M + U_1). 
   \end{align*}
Since in any group 
\[
|A_i + B_j + U_k| \geq \max \{|A_i|, |B_j|\},
\]
we conclude that 
$$
|A+B+U| \geq \sum^N_{i=1}|A_i| + \sum^M_{j=1}|B_j| = |A| + |B| \geq 2|A|^{1/2}|B|^{1/2}.
$$
In a similar way, for an arbitrary $A \supset U$ holds
$$
|A+B| \geq |A| + 
|B| -\min\{|A_1|, |A_N|, |B_1|, |B_M|\} \geq \frac{3}{2}|A|^{1/2}|B|^{1/2},
$$
and hence $\beta(U) \geq 2$  and $\alpha(U) \geq 3/2$.
\end{proof}

\section{Projection and compression}

By \emph{projection} we mean the application of any homomorphism. We think projections never increase
the value of our  $\alpha$'s and $\beta$'s.

\begin{Conj}[Projection conjecture]
  Let  $G$ be a group, $H$ its subgroup, $G'=G/H$ the factor group, $\varphi: G \to G'$ the natural homomorphism,
 $U\subset G$, $U'=\varphi(U)\subset G'$ and let $\vartheta$ be any of the functionals
 $\alpha, \alpha', \alpha'', \beta', \beta''$. We have
  \[ \vartheta(U') \leq \vartheta(U) . \]
\end{Conj}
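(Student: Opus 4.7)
The plan is to proceed by direct projection and reduce the conjecture to a single fiber-averaging inequality. For definiteness fix $\vartheta = \alpha$; the five other cases will be analogous. For arbitrary $\epsilon > 0$, pick $A, B \subset G$ with $U \subset A \cap B$ whose ratio $|A+B|/(|A||B|)^{1/2}$ is within $\epsilon$ of $\alpha(U)$. Setting $A' := \varphi(A)$ and $B' := \varphi(B)$, both contain $U'$, so the conjecture for $\alpha$ follows once one establishes
\begin{equation}\label{eq:fiber-avg}
    \frac{|A+B|}{|A'+B'|} \;\geq\; \sqrt{\frac{|A|}{|A'|} \cdot \frac{|B|}{|B'|}}.
\end{equation}
The same reduction works for the other functionals: projection preserves the constraints $|A|=|B|$ (for $\alpha'$) and $A=B$ (for $\alpha''$), while for $\beta'$ and $\beta''$ inequality \eqref{eq:fiber-avg} is replaced by the analogous one with $A+B+U$ and $A'+B'+U'$ in place of $A+B$ and $A'+B'$.

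The inequality \eqref{eq:fiber-avg} is clean in the ``cylinder'' special case, where $G = G' \times H$ splits and $A = A' \times A_H$, $B = B' \times B_H$ for some $A_H, B_H \subset H$. Then $A+B = (A'+B')\times(A_H+B_H)$ gives $|A|/|A'| = |A_H|$, $|B|/|B'| = |B_H|$, and \eqref{eq:fiber-avg} reduces to $|A_H+B_H| \geq \sqrt{|A_H||B_H|}$, which follows from $|A_H+B_H| \geq \max(|A_H|, |B_H|)$. This already settles the conjecture in the nontrivial family of situations where $A$ and $B$ happen to be ``vertically straight'' over their projections.

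The main obstacle---and the reason I believe the conjecture is genuinely difficult---is reducing the general case to the cylindrical one. My plan is a Bollob\'as-Leader-style compression on the $H$-fibers: fix an ordering on $H$ and replace each nonempty $A_{a'} := A \cap \varphi^{-1}(a')$ by an initial segment of $H$ of the same cardinality, and likewise for $B$. Standard compression estimates suggest $|A+B|$ cannot increase under this operation while $A'$ and $B'$ are preserved, and iterating across a basis of $H$ should drive the fibers into a common shape, reducing matters to the cylinder case. The sticking point is that existing compression theorems do not accommodate the constraint $U \subset A, B$ (for $\alpha, \alpha', \alpha''$) nor the asymmetric role of $U$ in the $\beta$-variants, and a $U$-respecting compression is not immediate. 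Designing such a compression, together with verifying that iteration converges to the cylindrical case without loss, is the core technical challenge; it is also consistent with the fact that $\beta$ is omitted from the statement, since the denominator $(|A||B|)^{1/2}$ prevents a clean reduction when $|A|$ and $|B|$ differ wildly across the fibers.
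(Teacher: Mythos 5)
This statement is a \emph{Conjecture} in the paper and is not proved there in full generality; what the paper does prove is the special case where $H$ is torsion-free (Theorem~\ref{projineq}), via compressions. Measured against that, your proposal has one genuine gap and one misconception.

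The gap is the fiber-averaging inequality \eqref{eq:fiber-avg}. You reduce everything to it but do not prove it, and the paper explicitly warns (in the remark just below the conjecture) that ``the natural approach of taking $A'=\varphi(A)$, $B'=\varphi(B)$ may not work even when $G=\setZ^2$, $G'=\setZ$.'' In other words, the inequality that sits at the center of your plan is exactly the one the authors flag as suspect, and nothing in your proposal discharges it. Note also that your intuition for why $\beta$ is omitted from the conjecture is backwards: $\beta$ (and all $\beta_p$) is \emph{not} omitted because the denominator is awkward; it is omitted because the projection inequality for $\beta_p$ is already a theorem, via the tensorization result (Theorem~\ref{betafactor}) combined with $\beta_p\geq 1$.

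On the compression side you are closer to the paper's actual argument than you may realize, but the mechanism is different in a way that matters. The paper's compression $C_\varphi$ does not aim to ``drive the fibers into a common shape'' and land in the cylinder case: it moves to the \emph{larger} group $G'\times\setZ$, replaces each fiber of $A$, $B$, $U$ by an initial segment of $\setZ$ of the same size, and proves $C_\varphi(A)+C_\varphi(B)\subset C_\varphi(A+B)$ by Cauchy--Davenport along fibers (Statement~\ref{compsum}) --- this is where torsion-freeness of $H$ enters, and why the general conjecture remains open. It then does not need a cylinder reduction at all: since $U'\subset C_\varphi(U)$ under the embedding $G'\hookrightarrow G'\times\{0\}$, monotonicity of $\vartheta$ in $U$ gives $\vartheta(U',G'\times\setZ)\leq\vartheta(C_\varphi(U),G'\times\setZ)\leq\vartheta(U,G)$, and Statement~\ref{tfindep} removes the extra $\setZ$ factor. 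Your worry about a ``$U$-respecting compression'' is a non-issue in this framework, because one only needs $C_\varphi(A)\supset C_\varphi(U)$ whenever $A\supset U$, which is automatic; what is genuinely hard, and unresolved, is torsion in $H$, which breaks the Cauchy--Davenport step rather than the $U$-constraint.
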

\begin{Rem}
  For $\beta_p$ the conjecture follows from Theorem~\ref{betafactor} as $\beta_p \geq 1$ always.
\end{Rem}

\begin{Rem}
  Essentially this means the following. Given sets $A,B\subset G$ (subject to certain conditions, depending
  on which of the functionals we consider) we need to find $A', B'\subset G'$ such that
   \[  \frac{|A'+B'|}{\sqrt{|A'| |B'| }} \leq   \frac{|A+B|}{\sqrt{|A| |B| }} \]
for the $\alpha$'s, or 
\[  \frac{|A'+B'+U'|}{|A'|^{1/p} |B'|^{1-1/p} } \leq   \frac{|A+B+U|}{|A|^{1/p} |B|^{1-1/p} } \]
for the $\beta$'s. The natural approach of taking $A'=\varphi(A)$, $B'=\varphi(B)$ may not work even when $G=\setZ^2$, $G'=\setZ$.
\end{Rem}

We establish an important special case.

\begin{Th}[Projection conjecture with no torsion]\label{projineq}
 Let  $G$ be a group, $H$ its subgroup, $G'=G/H$ the factor group, $\varphi: G \to G'$ the natural homomorphism,
 $U\subset G$ , $U'=\varphi(U)\subset G'$ and let $\vartheta$ be any of the functionals
 $\alpha, \alpha', \alpha'', \beta_p,  \beta', \beta''$. If $H$ is torsion-free, then
  \[ \vartheta(U') \leq \vartheta(U) . \]
\end{Th}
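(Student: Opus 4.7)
The plan is to take a near-optimal competitor pair $A, B \subset G$ for $\vartheta(U,G)$ and construct sets $\tilde A, \tilde B \subset G'$ with the same or smaller ratio; the torsion-freeness of $H$ will be used at exactly one point, through the sharp sumset bound $|X+Y| \ge |X|+|Y|-1$ in $H$. After passing to the subgroup of $G$ generated by $A\cup B\cup U$, we may assume $G$ is finitely generated. If $G'$ is pure torsion, then $G'$ is finite and $\vartheta(U',G')=1$, so the inequality is trivial (each of the six functionals is always at least $1$); otherwise pick $e\in G'$ of infinite order.

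Decompose $A,B,U$ into fibers over $G' = G/H$: write $A=\bigsqcup_{s'\in\varphi(A)} A_{s'}$ with $A_{s'} = A\cap\varphi^{-1}(s')$, and similarly $B_{t'}$, $U_{u'}$. For a large integer $N$, define
\[
\tilde A_{s'} := \{s' + mNe : 0 \le m < |A_{s'}|\} \subset G', \qquad \tilde A := \bigcup_{s'\in\varphi(A)} \tilde A_{s'},
\]
and analogously $\tilde B$. For $N$ sufficiently large, the progressions $\tilde A_{s'}$ are pairwise disjoint (and likewise for $\tilde B_{t'}$), so $|\tilde A|=|A|$ and $|\tilde B|=|B|$. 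Each side constraint of the various functionals transfers immediately: $|A|=|B|$ yields $|\tilde A|=|\tilde B|$; $A=B$ yields $\tilde A=\tilde B$; and $U\subset A$ yields $U'\subset\tilde A$ since each $s'\in U'$ lies in $\tilde A_{s'}$ (take $m=0$).

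The heart of the argument is the sumset comparison. For $N$ large, every element of $\tilde A + \tilde B + U'$ has a unique expression as $w'+mNe$ with $w'\in\varphi(A)+\varphi(B)+U'$ and $0\le m \le M(w')-1$, where
\[
M(w') := \max_{s'+t'+u'=w'}\bigl(|A_{s'}|+|B_{t'}|-1\bigr),
\]
so $|\tilde A + \tilde B + U'| = \sum_{w'} M(w')$. On the other hand, each fiber $(A+B+U)_{w'}$ lies inside a single coset of $H$ and contains $A_{s'}+B_{t'}+U_{u'}$ for every decomposition $s'+t'+u'=w'$; since $H$ is torsion-free,
\[
|A_{s'}+B_{t'}+U_{u'}| \ge |A_{s'}|+|B_{t'}|+|U_{u'}|-2 \ge |A_{s'}|+|B_{t'}|-1,
\]
whence $|(A+B+U)_{w'}| \ge M(w')$ and summing gives $|A+B+U| \ge |\tilde A + \tilde B + U'|$. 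The analogous bound $|\tilde A+\tilde B| \le |A+B|$ needed for the $\alpha$-type functionals is obtained the same way by dropping the $U$-term. Combined with $|\tilde A|=|A|$ and $|\tilde B|=|B|$, the required ratio inequality holds, and taking the infimum gives $\vartheta(U',G') \le \vartheta(U,G)$.

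The main obstacle is just bookkeeping: choosing $N$ so that representations are unique (this is where the infinite-order element $e$ is crucial) and verifying that the constraints transfer across all six functionals. The torsion-free hypothesis on $H$ is used only through the sumset bound in $H$, which is precisely what makes the inequality go in the opposite direction from Statement~\ref{st:homomorphism}.
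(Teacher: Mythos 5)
Your proof is correct and is essentially the same approach as the paper's. The paper first compresses each fiber over $G'$ to an initial segment of $\setZ$, landing in $G'\times\setZ$ (Statement~\ref{compsum} and Theorem~\ref{compalphabeta}, both resting on the torsion-free Cauchy--Davenport bound in $H$), and then removes the extra $\setZ$-factor via the large-dilation embedding of Statement~\ref{tfindep}; you fuse those two steps by compressing each fiber directly to an arithmetic progression $\{s'+mNe\}\subset G'$ with a large common step, but the underlying mechanism (fiberwise compression plus the inequality $|X+Y|\geq|X|+|Y|-1$ in the torsion-free $H$) is identical.
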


\begin{Def}
  Let  $G$ be a group, $H$ its subgroup, $G'=G/H$ the factor group, $\varphi: G \to G'$ the natural homomorphism.
The \emph{compression along $\varphi$} is the mapping $C_\varphi$ of finite subsets of $G$ into finite subsets of
$G' \times \setZ$ defined as follows. Let $A\subset G$ be a finite set. We put
 \[ C_\varphi(A) = \bigcup_{x\in\varphi(A)} (x \times \{ 0, 1, \ldots, |A \cap \varphi^{-1}(x)| -1 \} . \]
\end{Def}

That is, each part of $A$ in a coset of $H$ is replaced by an interval of the same size. 
If $G=\setZ^d$ and $H=\setZ^k$ with $k<d$, then we can naturally represent the compression in $\setZ^d$,
which is the classical usage of this term.

In what follows we will write $\varphi^{-1}_A(x)$ as an alias for $\varphi^{-1}(x) \cap A$. For a given set $A$ and $x \in G'$, such a set is called the \emph{fiber} of $A$ above $x$. One can say that the compression operator ``normalizes'' each fiber of $A$ by replacing it with an initial segment in $\mathbb{Z}$.

Clearly $ | C_\varphi(A)| = |A|$ always.

\begin{St}[Compressions shrink sumsets]\label{compsum}
   Let  $G$ be a group, $H$ its subgroup, $G'=G/H$ the factor group, $\varphi: G \to G'$ the natural homomorphism,
   $A, B\subset G$. If $H$ is torsion-free, then
    \[   C_\varphi(A) +  C_\varphi(B) \subset  C_\varphi(A+B) .\]
\end{St}
\begin{proof}
   The claim is standard and can be adopted from e.g. \cite{Gardner_Gronchi_2001}. 
   
   Let $z \in \varphi(C_\varphi(A) +  C_\varphi(B))$. There are $z_a \in \varphi(A)$ and $z_b \in \varphi(B)$ such that $z = z_a + z_b$. By the Cauchy-Davenport inequality and the definition of the compression,
   \[
    |\varphi^{-1}_{C_\varphi(A) +  C_\varphi(B)}(z)| = |\varphi^{-1}_{A}(z_a)| + |\varphi^{-1}_{B}(z_b)| - 1 \leq 
    |\varphi^{-1}_{A}(z_a) + \varphi^{-1}_{B}(z_b)| \leq |\varphi^{-1}_{C_\varphi(A+B)}(z)|,
   \]
   and the claim follows.
\end{proof}

\begin{Th}[Compressions]\label{compalphabeta}
   Let  $G$ be a group, $H$ its subgroup, $G'=G/H$ the factor group, $\varphi: G \to G'$ the natural homomorphism,
 $U\subset G$, and let $\vartheta$ be any of the functionals
 $\alpha, \alpha', \alpha'', \beta_p,  \beta', \beta''$. If $H$ is torsion-free, then
  \[ \vartheta(C_\varphi(U)) \leq  \vartheta(U) . \]
\end{Th}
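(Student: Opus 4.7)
The plan is to take near-optimal $A, B$ in the definition of $\vartheta(U)$, set $A' = C_\varphi(A)$ and $B' = C_\varphi(B)$ (collapsing to $A' = B' = C_\varphi(A)$ in the isomeric cases), and show these are admissible competitors for $\vartheta(C_\varphi(U))$ whose ratio is no larger than the original. Since $|C_\varphi(X)| = |X|$ for every finite $X$, the denominator is unchanged, so the whole argument reduces to a sumset comparison.

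First I would verify that all the admissibility constraints survive compression. The cardinality constraint $|A| = |B|$ appearing in $\alpha', \beta'$ and the diagonal constraint $A = B$ appearing in $\alpha'', \beta''$ are preserved by $C_\varphi$ trivially (for the diagonal one, simply take $A' = B' = C_\varphi(A)$). The only nontrivial check is the inclusion $C_\varphi(U) \subset A'$ needed for the $\alpha$-functionals: if $U \subset A$, then for every $x \in G'$ we have $|U \cap \varphi^{-1}(x)| \leq |A \cap \varphi^{-1}(x)|$, so the initial segment of $\setZ$ used to build $C_\varphi(U)$ above $x$ is contained in the one used for $C_\varphi(A)$, giving $C_\varphi(U) \subset C_\varphi(A)$.

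Next, applying Statement~\ref{compsum} twice bounds the numerator:
\[
C_\varphi(A) + C_\varphi(B) + C_\varphi(U) \subset C_\varphi(A+B) + C_\varphi(U) \subset C_\varphi(A+B+U),
\]
so $|C_\varphi(A) + C_\varphi(B) + C_\varphi(U)| \leq |A+B+U|$, with the one-step analogue $|C_\varphi(A) + C_\varphi(B)| \leq |A+B|$ handling the $\alpha$ case. Dividing by $|A|^{1/p}|B|^{1-1/p}$ for $\beta_p$ (and its specializations $\beta', \beta''$), or by $\sqrt{|A||B|}$ for the $\alpha$'s, and then passing to the infimum over admissible $A, B$, yields $\vartheta(C_\varphi(U)) \leq \vartheta(U)$.

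The only substantive input is Statement~\ref{compsum}, which is where torsion-freeness of $H$ enters (through Cauchy--Davenport on fibers). Once that is granted, the rest is essentially bookkeeping through the six admissibility regimes. I expect the main (mild) obstacle to be the case-check itself --- in particular ensuring that the isomeric constraint $A = B$ is respected by taking $A' = B'$, and that for the $\alpha$-family the containment $C_\varphi(U) \subset C_\varphi(A)$ is genuinely available from $U \subset A$ rather than assumed. Neither point is deep, but both must be handled cleanly, and a uniform statement of the six cases is worth writing out in a brief table at the start of the proof.
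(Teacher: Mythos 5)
Your proposal is correct and follows essentially the same route as the paper: compress $A$ and $B$, use Statement~\ref{compsum} (iterated once more for the three-fold sumset) to bound the numerator, observe that $|C_\varphi(X)|=|X|$ so the denominator is unchanged, and check that the containment and cardinality constraints are preserved. The paper compresses the case-check into one line, but the content is identical.
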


\begin{proof}
  Indeed, the previous statement implies that
\[  \frac{|C_\varphi(A)+C_\varphi(B)|}{\sqrt{|C_\varphi(A)| |C_\varphi(B)| }} \leq   \frac{|A+B|}{\sqrt{|A| |B| }} \]
and
\[  \frac{|C_\varphi(A)+C_\varphi(B)+C_\varphi(U)|}{|C_\varphi(A)|^{1/p} |C_\varphi(B)|^{1-1/p} } \leq   \frac{|A+B+U|}{|A|^{1/p} |B|^{1-1/p} }. \]
Also, the restrictions are preserved (if $A\supset U$, then $C_\varphi(A)\supset C_\varphi(U)$; if $|A| = |B|$, then
$ |C_\varphi(A)| = |  C_\varphi(B)|$).
\end{proof}

\begin{proof}[Proof of Theorem \ref{projineq}]
We can naturally embed $G'$ into $G' \times \setZ$ as $G' \times \{ 0\}$. With this embedding we have $U'\subset C_\varphi(U)$, hence
  
   \[ \vartheta(U,G) \geq \vartheta(C_\varphi (U), G' \times \setZ) \geq  \vartheta(U', G' \times \setZ) =\vartheta(U', G') ; \]
in the last step we apply Statement \ref{tfindep}.  
\end{proof}

\section{Direct product}

The behaviour of our quantities under  direct product and a somewhat more general operation (\emph{tensorization}, see Theorem~\ref{betafactor} below) is important for our applications.

\begin{Conj}[Multiplicativity hypothesis]\label{mult}
Let $G = G_1 \times G_2$, $V_1 \subset G_1$, $V_2\subset G_2$, $U = V_1 \times V_2$, and let $\vartheta$ be any of the functionals
$\alpha, \alpha', \alpha'', \beta_p,  \beta', \beta''$. We have
 \[ \vartheta(U) = \vartheta(V_1) \vartheta(V_2) . \]
\end{Conj}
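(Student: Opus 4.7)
The plan is to prove two matching inequalities. The upper bound $\vartheta(U)\le\vartheta(V_1)\vartheta(V_2)$ is almost immediate: for $\varepsilon>0$ pick $A_i,B_i\subset G_i$ witnessing $\vartheta(V_i)$ within a factor $1+\varepsilon$ and respecting whatever restrictions enter the definition (containment $A_i,B_i\supset V_i$; equal cardinality for $\alpha',\beta'$; $A_i=B_i$ for $\alpha'',\beta''$). Setting $A=A_1\times A_2$ and $B=B_1\times B_2$, one has $A+B=(A_1+B_1)\times(A_2+B_2)$ and $A+B+U=(A_1+B_1+V_1)\times(A_2+B_2+V_2)$, so every relevant cardinality factors as a product, the constraints transfer, and sending $\varepsilon\to 0$ delivers the upper bound for all six functionals simultaneously.

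The lower bound $\vartheta(U)\ge\vartheta(V_1)\vartheta(V_2)$ is the real content. Focus first on $\beta_p$. Given arbitrary $A,B\subset G_1\times G_2$, slice along the first coordinate: set $A^x=\{y\in G_2:(x,y)\in A\}$ and $B^y$ analogously. For every $z\in G_1$ and every decomposition $z=x+y+v$ with $v\in V_1$, the fibre of $A+B+U$ above $z$ contains $A^x+B^y+V_2$; assuming $A^x,B^y$ are non-empty, that fibre has size at least $\beta_p(V_2)\,|A^x|^{1/p}|B^y|^{1-1/p}$. Summing over $z$ and extracting $\beta_p(V_2)$ reduces the desired inequality to the scalar Pr\'ekopa--Leindler-type bound
\[
\sum_{z\in G_1}\ \max_{\substack{x+y+v=z\\ v\in V_1}} f(x)^{1/p}g(y)^{1/q}\;\ge\;\beta_p(V_1)\,\Bigl(\sum_x f(x)\Bigr)^{1/p}\Bigl(\sum_y g(y)\Bigr)^{1/q}
\]
with $f(x)=|A^x|$ and $g(y)=|B^y|$. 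This is exactly the functional inequality that the weighted $\beta_p$-analog of Part~II is designed to furnish: tensorize via a two-point base inequality plus induction on the ambient dimension, in the style of Pr\'ekopa--Leindler. In other words, multiplicativity of a weighted $\beta_p$ on Cartesian products of functions tensorizes into multiplicativity on Cartesian products of sets, and supplies the $\beta=\beta_2$ case as well.

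The main obstacle is the $\alpha$ family. The slicing argument above breaks down because an arbitrary $A\supset V_1\times V_2$ contains $V_2$ in the fibre $A^x$ only for $x\in V_1$; the fibres outside $V_1$ are uncontrolled, so one cannot apply a uniform $\alpha(V_2)$ bound fibre-wise. Bootstrapping from the $\beta_p$-multiplicativity through the inequalities of Section~\ref{23ineq} loses constants, and compression via Theorem~\ref{compalphabeta} reduces matters to the Projection Conjecture, which is open. The genuine difficulty is to concoct a weighted analog of $\alpha$ with a workable two-point tensorization step despite the absence of an anchor set $U$, and this appears to be the principal obstruction behind the $\alpha$-cases of the conjecture.
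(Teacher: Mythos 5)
You have correctly read the situation: this is a \emph{conjecture} in the paper, established only in part. The paper proves submultiplicativity for all six functionals (Statement~\ref{sub}) and full multiplicativity only for $\beta_p$ (Theorem~\ref{Th:beta-mult}), the latter via the functional tensorization of Theorem~\ref{gammafactor}. Your submultiplicativity argument (take $A=A_1\times A_2$, $B=B_1\times B_2$) is exactly the paper's, and your fiber-slicing derivation of the $\beta_p$ lower bound is the set-level shadow of the paper's $\gamma_p$ tensorization, reducing to a scalar max-convolution inequality over $G_1$ — same decomposition, same appeal to the two-point Pr\'ekopa--Leindler base case. You also correctly identify the $\alpha$ family as a genuine obstruction, for the right reason: outside $V_1$ the fibres of $A$ need not contain $V_2$, so no uniform $\alpha(V_2)$-bound applies fibre-wise, and there is no anchor set $U$ to rescue the argument.

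One gap in your account: $\beta'$ and $\beta''$ are also unresolved, not covered by the slicing argument, and you implicitly lump them with the solved $\beta_p$ case. The difficulty is that the constraints defining $\beta'$ and $\beta''$ do not factor along fibres. For $\beta''$ one needs $A=B$, but the fibre of $A+A+U$ above $z=x+y+v$ contains $A^x+A^y+V_2$ with $x\neq y$ in general, which is not of the form $C+C+V_2$, so $\beta''(V_2)$ gives no control; one can only invoke the unrestricted $\beta(V_2)$, which is weaker. Similarly, $|A|=|B|$ does not force $|A^x|=|B^y|$ fibre-by-fibre, so $\beta'(V_2)$ cannot be applied inside each fibre. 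This is consistent with the paper, where the functional Theorem in Section~\ref{funcdir} proves only $\gamma'(f)\le\gamma'(f_1)\gamma'(f_2)$ and $\gamma''(f)\le\gamma''(f_1)\gamma''(f_2)$, i.e.\ the easy direction, and leaves the reverse open for $\gamma'$ and $\gamma''$. So the honest statement is that all of $\alpha, \alpha', \alpha'', \beta', \beta''$ remain conjectural, and the isometric/isomeric $\beta$'s fail for a reason structurally parallel to, though milder than, the failure for $\alpha$.
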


Submultiplicativity is easy.

\begin{St}[Sub-multiplicativity]\label{sub} 
  Let $G = G_1 \times G_2$, $V_1 \subset G_1$, $V_2\subset G_2$, $U = V_1 \times V_2$, and let $\vartheta$ be any of the functionals
$\alpha, \alpha', \alpha'', \beta_p,  \beta', \beta''$. We have
 \[ \vartheta(U) \leq \vartheta(V_1) \vartheta(V_2) . \]
\end{St}

\begin{proof}
We show only for $\vartheta = \alpha$, the rest being similar. Let $A_1 ,B_1$ be arbitrary sets containing $V_1$ and $A_2 , B_2$ be arbitrary sets containing $V_2$. Then $A_1 \times A_2$ and $B_1 \times B_2$ contain $V_1 \times V_2$ and so
$$\alpha(V_1 \times V_2) \leq \frac{|A_1 \times A_2 + B_1 \times B_2|}{(|A_1||B_1||A_2||B_2|)^{1/2}} = \frac{|A_1 + B_1||A_2 + B_2|}{(|A_1||B_1||A_2||B_2|)^{1/2}}.$$ Thus 
$$\alpha(V_1 \times V_2) \leq \alpha(V_1) \alpha(V_2).$$
\end{proof}

The multiplicativity hypothesis, Conjecture~\ref{mult},  would have consequences for the comparison problems of Section \ref{23ineq}.

\begin{St}
  Let $U$ be a subset of a commutative group.

If Conjecture~\ref{mult} holds for $\alpha'$,  then $\alpha(U)=\alpha'(U) \leq \beta'(U)$.

 If Conjecture~\ref{mult} holds for $\alpha''$,  then $\alpha''(U)\leq \beta''(U)$.

If Conjecture~\ref{mult} holds for $\beta'$,  then $\beta(U) = \beta'(U)$.
\end{St}

\begin{proof}
The inequalities
$$\alpha'(U) \leq \beta'(U) , \ \ \ \alpha''(U) \leq \beta''(U),$$
follow from \eqref{alfabeta}, that is 
$$\alpha'(U) \leq 4 \beta'(U) , \ \ \ \alpha''(U) \leq3 \beta''(U),$$
and Conjecture~\ref{mult} with the tensor power trick. We prove only the first of the two inequalities, the second following similarly. Indeed for any $n \geq 1$, first using Conjecture~\ref{mult} for $\alpha'$ and then Statement~\ref{sub} for $\beta'$, we find
$$\alpha'(U)^n = \alpha'(U^n) \leq 4 \beta'(U^n)\leq 4 \beta'(U)^n .$$
Thus $$\alpha'(U) \leq 4^{1/n} \beta'(U),$$ and the result follows from allowing $n\to \infty$. 
We now show that Conjecture~\ref{mult} implies
$$\alpha(U) = \alpha'(U).$$
By Statement~\ref{basicprops}, it is enough to show $\alpha'(U) \leq \alpha(U)$. Let $A$ and $B$ be sets containing $U$. Then $A\times B$ and $B\times A$ contain $U\times U$ and are of the same size, so
$$\alpha'(U^2) \leq \frac{|A\times B + B \times A|}{|A||B|} = \frac{|A+B|^2}{|A||B|}.$$
The result then follows from Conjecture~\ref{mult} for $\alpha'$. The inequality $\beta(U) = \beta'(U)$ is similar. 
\end{proof}

We are far from knowing the multipliciativity of $\alpha$, as we cannot even compute $\alpha(\{0,1\}^d)$. We do know multiplicativity of $\beta$.

\begin{Th}[Multiplicativity of $\beta$]\label{Th:beta-mult}
  Let $G = G_1 \times G_2$, $V_1 \subset G_1$, $V_2\subset G_2$, $U = V_1 \times V_2$. We have
   \[ \beta_p(U) =  \beta_p(V_1) \beta_p(V_2). \]
\end{Th}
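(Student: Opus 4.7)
The upper bound $\beta_p(U) \leq \beta_p(V_1)\beta_p(V_2)$ is Statement~\ref{sub}, so the work is to prove the matching lower bound $\beta_p(U) \geq \beta_p(V_1)\beta_p(V_2)$. My plan is to tensorize: slice arbitrary $A, B \subset G_1 \times G_2$ along $G_2$-fibers, apply $\beta_p(V_1)$ fiberwise, and then apply a weighted (functional) version of $\beta_p(V_2)$ to the resulting sum over $G_2$.

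Concretely, for each $y \in G_2$ I would set $A_y := \{x \in G_1 : (x, y) \in A\}$ and similarly $B_y$. For any $z \in G_2$ and any decomposition $z = y_1 + y_2 + u_2$ with $u_2 \in V_2$, the $z$-fiber of $A + B + V_1 \times V_2$ contains $A_{y_1} + B_{y_2} + V_1$. Choosing the best such decomposition and invoking the definition of $\beta_p(V_1)$ gives
\[
    |(A+B+V_1\times V_2)_z| \;\geq\; \beta_p(V_1)\max_{\substack{y_1+y_2+u_2=z \\ u_2\in V_2}} f(y_1)\,g(y_2),
\]
where $f(y):=|A_y|^{1/p}$ and $g(y):=|B_y|^{1/q}$. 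Summing over $z$, the theorem reduces to the one-group weighted inequality
\[
    \sum_{z\in G_2} \max_{\substack{y_1+y_2+u_2=z \\ u_2\in V_2}} f(y_1)\,g(y_2) \;\geq\; \beta_p(V_2)\Bigl(\sum_y f(y)^p\Bigr)^{1/p}\Bigl(\sum_y g(y)^q\Bigr)^{1/q},
\]
since $\sum_y f(y)^p = |A|$ and $\sum_y g(y)^q = |B|$.

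The hard part will be this functional version of $\beta_p(V_2)$, which must hold for arbitrary nonnegative weights $f, g$ rather than just indicators. I would prove it via a Pr\'ekopa--Leindler--style layer-cake argument: decompose $f^p$ and $g^q$ through their super-level sets $\{f^p > t\}$ and $\{g^q > s\}$; observe that the super-level set of the max-convolution at threshold $ts$ contains the sumset $\{f^p > t\} + \{g^q > s\} + V_2$; apply the set inequality $\beta_p(V_2)$ at each pair of levels; and reassemble via the change of variables $\tau = ts$ together with H\"older's inequality. This is precisely the weighted framework that the paper flags as the substance of Part~II, and it is also the point at which the two-point inequality (Theorem~\ref{Th:gamma2point}) enters when proving the weighted bound by induction. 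Once this functional $\beta_p(V_2)$ is available, chaining it with the fiberwise bound above yields $\beta_p(U) \geq \beta_p(V_1)\beta_p(V_2)$ and closes the tensorization.
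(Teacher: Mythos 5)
Your proposal is correct and takes essentially the same route as the paper: slice along one factor, apply $\beta_p$ of the other factor fiberwise, interchange max and sum, and finish with the functional inequality $\gamma_p(1_{V_2})\ge\beta_p(V_2)$ proved by a layer-cake/multiplicative Pr\'ekopa--Leindler argument (this is Theorem~\ref{th:betaisgamma}, and the overall scheme is Theorem~\ref{gammafactor} specialized to a product). One small aside: Theorem~\ref{Th:gamma2point} is not actually needed for multiplicativity, only for the quasi-cube bounds.
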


This will follow from supermultiplicativity, which we shall establish in a more general setting.

\begin{Th}[$\beta$ along fibers]\label{betafactor}
   Let  $G$ be a group, $H$ its subgroup, $G'=G/H$ the factor group, $\varphi: G \to G'$ the natural homomorphism,
   $U\subset G$, $V=\varphi(U)$. We have
    \[  \beta_p(U) \geq  \beta_p(V) \min_{x\in V} \beta_p\bigl( U \cap \varphi^{-1}(x) \bigr) . \]
\end{Th}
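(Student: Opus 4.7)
The plan is to decompose $A$ and $B$ along the cosets of $H$, apply the fiberwise $\beta_p$-bound inside each coset, and reduce the problem to a weighted (functional) form of the $\beta_p$-inequality for $V$ in the quotient $G' = G/H$. This is a standard Pr\'ekopa--Leindler-style tensorization: the ``inner'' $\beta_p$'s live on cosets of $H$ and the ``outer'' $\beta_p$ lives on $G'$, with the two being combined by an integral/weighted estimate.

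Fix finite nonempty $A, B \subset G$ and set $m := \min_{x \in V}\beta_p(U\cap \varphi^{-1}(x))$. Let $P := \varphi(A)$, $Q := \varphi(B)$, and decompose $A = \bigsqcup_{y \in P}A_y$, $B = \bigsqcup_{z \in Q}B_z$, $U = \bigsqcup_{x \in V}U_x$ with $A_y := A \cap \varphi^{-1}(y)$, etc.; write $a_y := |A_y|$, $b_z := |B_z|$. For any triple $(y, z, x) \in P \times Q \times V$ the set $A_y + B_z + U_x$ lies in the single coset $\varphi^{-1}(y+z+x)$, and using $(A_y, B_z)$ as a test pair in the definition of $\beta_p(U_x)$ yields
$$|A_y + B_z + U_x| \geq \beta_p(U_x)\,a_y^{1/p}b_z^{1/q} \geq m\,a_y^{1/p}b_z^{1/q}.$$
Since $(A+B+U)\cap \varphi^{-1}(w)$ contains every such $A_y+B_z+U_x$ with $y+z+x=w$, taking the maximum over these triples and summing over $w \in P+Q+V$ gives
$$|A+B+U| \geq m \sum_{w}\, \max_{\substack{y \in P,\,z \in Q,\,x \in V\\ y+z+x=w}} a_y^{1/p}\,b_z^{1/q}.$$

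It thus remains to establish the weighted form of the $\beta_p(V)$-inequality in $G'$:
$$\sum_{w} \max_{y+z+x=w} a_y^{1/p}b_z^{1/q} \geq \beta_p(V)\Bigl(\sum_y a_y\Bigr)^{1/p}\Bigl(\sum_z b_z\Bigr)^{1/q} = \beta_p(V)\,|A|^{1/p}|B|^{1/q}.$$
When $(a_y),(b_z)$ are indicators of finite sets $P',Q' \subset G'$, the left-hand side is exactly $|P'+Q'+V|$ and this reduces to the definition of $\beta_p(V)$; the required inequality extends that bound to arbitrary finitely-supported nonnegative weights. Combining it with the previous display gives $|A+B+U| \geq m\,\beta_p(V)\,|A|^{1/p}|B|^{1/q}$, completing the proof since $A,B$ were arbitrary.

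The \emph{main obstacle} is the functional inequality in the last paragraph. The unweighted definition of $\beta_p(V)$ controls only cardinalities of finite sets, whereas what is needed here is a bound on sums of mixed $p$-th and $q$-th powers of real weights. A naive integer-multiplicity construction in an auxiliary extension $G' \times \mathbb{Z}^N$ is not enough: replacing each $y$ by a box of volume $a_y$ produces fiber contributions of size $a_y + b_z - 1$ rather than the desired $a_y^{1/p}\,b_z^{1/q}$, so the set version of $\beta_p$ cannot simply be imported by a product construction. The correct route is the box-volume / Pr\'ekopa--Leindler calculation, which is precisely the reason the paper develops a weighted (integral) variant of $\beta_p$; once that variant is in hand, the fiber decomposition above finishes the theorem.
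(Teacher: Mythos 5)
Your reduction is sound and is essentially the paper's argument specialized to indicator functions. The display $|A+B+U|\ge m\sum_{w}\max_{y+z+x=w}a_y^{1/p}b_z^{1/q}$ is precisely the sum--max exchange in the proof of Theorem~\ref{gammafactor} with $f=\mathbf{1}_U$, $g=\mathbf{1}_A$, $h=\mathbf{1}_B$ (your weights $a_y^{1/p}$ and $b_z^{1/q}$ are the fiber norms $\|g|_{\varphi^{-1}(y)}\|_p$ and $\|h|_{\varphi^{-1}(z)}\|_q$ appearing there). The ``main obstacle'' you correctly isolate, namely $\sum_{w}\max_{y+z+x=w}a_y^{1/p}b_z^{1/q}\ge\beta_p(V)\,|A|^{1/p}|B|^{1/q}$, is exactly the nontrivial direction $\gamma_p(\mathbf{1}_V)\ge\beta_p(V)$ of Theorem~\ref{th:betaisgamma}: writing $g'(y)=a_y^{1/p}$, $h'(z)=b_z^{1/q}$, the left side is $\|\mathbf{1}_V\C g'\C h'\|_1$ and the right side is $\beta_p(V)\|g'\|_p\|h'\|_q$. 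The paper establishes that inequality via a layer-cake decomposition together with the multiplicative Pr\'ekopa--Leindler inequality (Lemma~\ref{lm:prekopa-leidner}), and your remark that a box blow-up in an auxiliary $\setZ^N$ only produces additive fiber sizes $a_y+b_z-1$ rather than the needed multiplicative $a_y^{1/p}b_z^{1/q}$ is exactly why a genuinely functional argument is unavoidable at that point. Citing Theorem~\ref{th:betaisgamma} fills the gap, and then your proof coincides with the paper's, which deduces Theorem~\ref{betafactor} from Theorem~\ref{gammafactor} combined with Theorem~\ref{th:betaisgamma}.
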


If $H$ is a direct factor, this can be reformulated as follows.

\begin{Cor} \label{thm:beta_direct_factor}
  Let $G = G_1 \times G_2$, $V \subset G_1$, and for each $x\in V$ given a set $W_x\subset G_2$. Put
   \[ U= \bigcup_{x\in V} \{ x\} \times W_x .\]
We have
   \[ \beta_p(U) \geq  \beta_p(V) \min_{x\in V} \beta_p(W_x) . \]
\end{Cor}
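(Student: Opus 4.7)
The plan is to obtain the corollary as a direct specialization of Theorem~\ref{betafactor}. Take $H := \{0\} \times G_2$ as a subgroup of $G = G_1 \times G_2$; then $G/H$ is canonically identified with $G_1$, and the natural projection $\varphi\colon G \to G_1$ is the first-coordinate map. Discarding indices $x$ with $W_x = \emptyset$ (which contribute nothing to $U$), we have $\varphi(U) = V$ and $U \cap \varphi^{-1}(x) = \{x\} \times W_x$ for each $x \in V$. Theorem~\ref{betafactor} then yields
\[
\beta_p(U) \geq \beta_p(V) \min_{x \in V} \beta_p\bigl(\{x\} \times W_x\bigr),
\]
with $\beta_p(V)$ naturally computed in the quotient $G_1$ and each $\beta_p\bigl(\{x\} \times W_x\bigr)$ in the ambient $G$.

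What remains is the identification $\beta_p\bigl(\{x\} \times W_x, G\bigr) = \beta_p(W_x, G_2)$. Translation invariance of $\beta_p$ (shifting $A$ and $B$ by $(-x, 0)$ in the defining infimum) reduces this to $\beta_p\bigl(\{0\} \times W_x, G\bigr) = \beta_p(W_x, G_2)$, i.e.\ to ambient-group independence for $\beta_p$ along the subgroup $\{0\} \times G_2 \cong G_2$. One direction is immediate, since any test pair $A, B \subset G_2$ embeds unchanged into $G$. For the reverse direction, the proof of Theorem~\ref{th:betaind} adapts essentially verbatim: decomposing $A, B \subset G$ into cosets of $\{0\} \times G_2$, taking $A_1$ and $B_1$ to be the largest pieces, and using disjointness of the slices across distinct cosets yields
\[
|A + B + (\{0\} \times W_x)| \geq \beta_p(W_x, G_2)\, |A_1|^{1/p} \sum_j |B_j|^{1/q}
\]
and its $A$--$B$-symmetric counterpart.

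These two inequalities combine via a weighted geometric mean of weights $1/q$ and $1/p$, together with the elementary bound $\sum_i |A_i| \leq |A_1|^{1/q} \sum_i |A_i|^{1/p}$ — immediate from $|A_1| = \max_i |A_i|$ and $|A_i| = |A_i|^{1/p} |A_i|^{1/q}$ — to give the desired lower bound $\beta_p(W_x, G_2)\, |A|^{1/p} |B|^{1/q}$. The only step requiring genuine care is the exponent bookkeeping in this Hölder-type combination, which cleanly mirrors the $p = q = 2$ version of the argument in Theorem~\ref{th:betaind} with $\sqrt{\cdot}$ replaced by $|\cdot|^{1/p}$ and $|\cdot|^{1/q}$ respectively; I do not anticipate a substantive obstacle.
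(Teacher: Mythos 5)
Your proof is correct and follows the paper's intended route: specialize Theorem~\ref{betafactor} with $H=\{0\}\times G_2$, discard indices with $W_x=\emptyset$ (in which case the bound is vacuous since $\beta_p(\emptyset)=0$), and then translate $\beta_p(\{x\}\times W_x, G)$ into $\beta_p(W_x,G_2)$. The one point the paper leaves implicit---and you rightly make explicit---is that the ambient-group independence (Theorem~\ref{th:betaind}) is stated only for $\beta=\beta_2$, so the $\beta_p$ version requires the weighted geometric mean with exponents $1/q$ and $1/p$ together with $\sum_i |A_i| \le |A_1|^{1/q}\sum_i |A_i|^{1/p}$ in place of the $p=q=2$ H\"older step; your bookkeeping there checks out.
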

\tous{An alternative form is as follows: 
\begin{Th}
  Let $G = G_1 \times G_2$,  $U\subset G$. For $x\in G_1$, define $V_x = \{y\in G_2: (x,y)\in U \}$. Put $U_1= \{x\in G_1: V_x \neq \emptyset \}$. We have
   \[ \gamma(U) \geq \gamma(U_1) \min_{x\in U_1} \gamma(V_x) . \]
In particular, if  $U = V_1 \times V_2$, then $\gamma(U)\geq \gamma(V_1) \gamma(V_2)$.
\end{Th}
Which is better?  \\
Dmitrii: $\gamma$ is not yet defined here, so I'd stick to $\beta$ here}

Theorem~\ref{betafactor} (and thus Theorem~\ref{Th:beta-mult} and Corollary~\ref{thm:beta_direct_factor}) will be proved in a yet more general form in Section~\ref{funcdir}. It turns out that a functional analog of $\beta$ that we introduce shortly, provides greater flexibility for carrying out an induction argument.

\vskip 1cm
\centerline{ \large{ \bf Part II: functions}}
\vskip 1cm
    
\section{Functional tripling}

We shall consider nonnegative-valued functions in the space $\ell^1(G)$. A set $A$  naturally
corresponds to the function $\1A$.

\begin{Def}
  The \emph{max-convolution} of the functions $f,g$ is
   \[ (f\C g)(x) = \max_t  f(t)g(x-t) .\]
\end{Def}

This generalizes the notion of sumset. For the indicator functions $\1A, \1B$ of sets $A, B$ we have
 \[ \1A \C \1B = \1{A+B} .\]

One can replace the notion of cardinality of a set with the $\ell^1$ norm of a function. However, we have a more robust notion. 

\begin{Def}
  The \emph{level sets} of a function $f$ are the sets
   \[ \f(t) = \{ x\in G: f(x) \geq t \} . \]
   The \emph{distribution function} of $f$ is the function $F: \setR^+ \to \setZ$ given by
    \[ F(t) = | \f(t) | .\]
\end{Def}

Note that this is different from the definition used in probability theory.

\begin{Def}
  Let $f,g$ be functions with distribution functions $F, G$. If $F=G$, we call them
\emph{identically distributed} and write $f \sim g$.
\end{Def}

 \begin{Def}\label{gamma} The \emph{functional triplings} of a function $f$ are the quantities
    \[ \gamma(f) = \inf_{g,h} \frac{ \norm{f\C g\C h}_1}{ \norm{g}_2  \norm{h}_2 } ,\]         
the \emph{unrestricted tripling};
    \[ \gamma_p(f) = \inf_{g,h} \frac{ \norm{f\C g\C h}_1}{ \norm{g}_p  \norm{h}_q } ,\]    
its asymmetric variant, where $1/p+1/q=1$;     
\[ \gamma'(f) = \inf_{g \sim h} \frac{ \norm{f\C g\C h}_1}{ \norm{g}_2  \norm{h}_2 } ,\]       
the \emph{isometric tripling};
 \[ \gamma''(f) = \inf_{g} \frac{ \norm{f\C g\C g}_1}{ \norm{g}_2^2  } ,\]         
the \emph{isomeric tripling}.
       \end{Def}

       \begin{Conj}
          \[ \gamma = \gamma' = \gamma'' .\]
       \end{Conj}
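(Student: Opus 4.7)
The trivial direction $\gamma(f) \le \gamma'(f) \le \gamma''(f)$ is immediate: the admissible classes nest as $\{g=h\}\subset\{g\sim h\}\subset\{\text{all pairs}\}$, and when $g=h$ the denominator $\norm{g}_2\norm{h}_2$ collapses to $\norm{g}_2^2$. The entire content of the conjecture is thus the reverse chain, and it suffices to prove $\gamma''(f) \le \gamma(f)$. Since the analogous identity $\beta = \beta''$ for sets is still open (it is part of Conjecture~\ref{dct}), we should not expect a short elementary proof.

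My plan is a tensor power argument modelled on the treatment of the $\alpha$'s and $\beta$'s in Section~\ref{23ineq}. Two ingredients are needed: (i) multiplicativity $\gamma(f_1\otimes f_2) = \gamma(f_1)\gamma(f_2)$ and $\gamma''(f_1\otimes f_2) = \gamma''(f_1)\gamma''(f_2)$ under direct products of groups, and (ii) an \emph{absolute} constant $C$ with $\gamma''(f) \le C\gamma(f)$ for every nonnegative $f \in \ell^1(G)$. Granted both,
\[
\gamma''(f)^n \;=\; \gamma''(f^{\otimes n}) \;\le\; C\,\gamma(f^{\otimes n}) \;=\; C\,\gamma(f)^n,
\]
and letting $n \to \infty$ yields $\gamma''(f) \le \gamma(f)$. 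Submultiplicativity in (i) is trivial by tensoring competitors; supermultiplicativity should follow by a fibrewise induction analogous to the forthcoming Theorem~\ref{betafactor}, carried out in the functional framework of Part~II. To establish (ii) I would try to port the disjoint-translate construction that yields $\alpha''(V) \le 3\beta''(V)$ in Statement~\ref{basicprops}: pick $(g,h)$ nearly optimal for $\gamma(f)$ and a direction $x_0 \in G$ of infinite order, and form
\[
g_k(x) \;=\; \max\bigl(\max_{1\le i\le k} g(x - ix_0),\ \max_{1\le i\le k} h(x - (k+i)x_0)\bigr),
\]
with $x_0$ so large that the $2k$ translates have disjoint supports. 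Then $\norm{g_k}_2^2 = k(\norm{g}_2^2+\norm{h}_2^2) \ge 2k\norm{g}_2\norm{h}_2$ by AM--GM, while $f\C g_k\C g_k$ splits as a disjoint max of $\approx 4k^2$ translated copies of the three shapes $f\C g\C g$, $f\C g\C h$, $f\C h\C h$.

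The decisive obstacle is bounding the same-sided pieces $\norm{f\C g\C g}_1$ and $\norm{f\C h\C h}_1$ by an absolute constant times the mixed piece $\norm{f\C g\C h}_1$. The set-level analogue of this, coming from Pl\"unnecke (Lemma~\ref{Plu_alt}), only yields a \emph{quadratic} loss---this is exactly why for sets one only knows $\beta(U) \le \alpha(U)^2$ in \eqref{betaalfa} and not the linear bound $\beta'' \le C\beta$. A naive execution of the above plan would therefore only deliver $\gamma''(f) \le \gamma(f)^2$, and the tensor power trick would stall at equality of the logarithms rather than of the quantities themselves. My expectation is that the decisive step must exploit the extra flexibility of functions over sets: one should manufacture a single symmetric competitor from $(g,h)$ by an interpolation such as $g' \sim \sqrt{gh}$ (equating level sets rather than pointwise values) and invoke a sharpened two-point estimate in the spirit of the core lemma of Part~II, where the extremisers are geometric progressions for which the symmetrisation $g=h$ is free. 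Verifying that such an interpolation really is available in sufficient generality---and, in particular, stable under the fibrewise induction used for (i)---is the point at which I expect the real work to lie.
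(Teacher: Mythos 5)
You have correctly read the statement for what it is: the paper labels it a Conjecture and supplies no proof, and your diagnosis of the decisive obstruction---the absence of a ``functional Pl\"unnecke'' that would give a linear bound $\gamma''(f) \le C\gamma(f)$---is precisely the reason the authors privately record for the conjecture being open. The trivial chain $\gamma \le \gamma' \le \gamma''$ by nesting of admissible classes is also right, and you are honest that your sketch does not close the gap.

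One further gap in the plan is worth naming explicitly, because you pass over it as if it were routine. Step (i) of your tensor-power argument requires $\gamma''(f^{\otimes n}) \ge \gamma''(f)^n$, i.e.\ \emph{super}multiplicativity of $\gamma''$. The paper's multiplicativity theorem in Section~\ref{funcdir} proves equality only for $\gamma_p$; for $\gamma'$ and $\gamma''$ it gives only $\gamma''(f_1 \otimes f_2) \le \gamma''(f_1)\gamma''(f_2)$, which is the wrong direction for your purposes. Nor is it ``analogous to the forthcoming Theorem~\ref{betafactor}'': the fiberwise tensorization of Theorem~\ref{gammafactor} builds auxiliary functions $g', h'$ on $G'$ by taking $\ell^p$- and $\ell^q$-norms of the restrictions of $g$ and $h$ to each fiber, and there is no reason these should stay equal (or identically distributed) when $g=h$, so the argument does not respect the constraint defining $\gamma''$. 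Thus your plan is conditional on \emph{two} unproved ingredients---the crude comparison $\gamma'' \le C\gamma$ and supermultiplicativity of $\gamma''$---each of which appears to be of comparable depth to the conjecture itself.
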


\tous{Can we bound $\gamma''$ in terms of $\gamma$? Since we don't have a ``functional Pl\"unnecke'',
we cannot apply the same approach as for $\beta$.}

Tripling of sets can be expressed via functional tripling.

\begin{Th}[Function and Set analog of $\beta$ are the same]\label{th:betaisgamma}
  Let $U$ be any finite set in a commutative group. We have
   \[ \beta_p(U)=\gamma_p(\1U), \ \beta'(U)=\gamma'(\1U), \ \beta'(U)=\gamma''(\1U). \]
\end{Th}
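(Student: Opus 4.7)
The plan is to match upper and lower bounds in each of the three identities. The easy direction $\gamma \le \beta$ in all three cases follows by restricting the functional infima to indicator functions: substituting $g = \1A$, $h = \1B$ gives $\|\1U \C \1A \C \1B\|_1 = |U+A+B|$, $\|\1A\|_p = |A|^{1/p}$, $\|\1B\|_q = |B|^{1/q}$, so the restricted ratio coincides with the one defining $\beta_p(U)$. The side conditions $\1A \sim \1B$ (equivalent to $|A|=|B|$) and $g=h$ (equivalent to $A=B$) handle the $\gamma'$ and $\gamma''$ cases.

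For the reverse direction in the $\beta_p$ case, I would fix non-negative $g,h$ and set $F = \1U \C g \C h$. Writing $\g(t) = \{g \ge t\}$, $\h(s) = \{h \ge s\}$ with distribution functions $\phi, \psi$, the decisive pointwise inequality $F(y) \ge ts$ on $y \in U + \g(t) + \h(s)$ gives $\{F \ge r\} \supseteq U + \g(t) + \h(s)$ whenever $ts \ge r$. Applying the definition of $\beta_p$ to the pair $(\g(t), \h(s))$ and invoking Cavalieri's principle,
\[
   \|F\|_1 = \int_0^\infty |\{F \ge r\}|\, dr \;\ge\; \beta_p(U) \int_0^\infty \sup_{ts = r} \phi(t)^{1/p} \psi(s)^{1/q}\, dr.
\]

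The hard part will be the analytic inequality $\int_0^\infty \sup_{ts=r} \phi(t)^{1/p}\psi(s)^{1/q}\, dr \ge \|g\|_p \|h\|_q$. My plan is to reduce it to the continuous Prékopa-Leindler inequality (Theorem~\ref{PL}) via the multiplicative substitution $t = e^a$, $s = e^b$, $r = e^c$, which converts the multiplicative constraint $ts=r$ into the additive $a+b=c$. Setting $u(a) = \phi(e^a) e^{pa}$ and $v(b) = \psi(e^b) e^{qb}$, and using the identity $\|g\|_p^p = p \int u$, the inequality rearranges into
\[
   \|u^{1/p} \C v^{1/q}\|_{L^1(\setR)} \;\ge\; p^{1/p} q^{1/q} \Bigl(\int u\Bigr)^{1/p} \Bigl(\int v\Bigr)^{1/q},
\]
which is a direct consequence of Prékopa-Leindler applied with $\lambda = 1/q$ after the standard rescaling $f(x) = u(x/p)$, $g(y) = v(y/q)$.

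For $\gamma'$ and $\gamma''$ the argument simplifies substantially, because the hypothesis on $g,h$ forces $\phi = \psi$. Taking $s = t$ in the layer cake makes $|\g(t)| = |\h(t)|$, so $\beta'(U)$ applies to the pair and gives $|\{F \ge t^2\}| \ge \beta'(U)\phi(t)$. Integrating,
\[
   \|F\|_1 \;\ge\; \beta'(U) \int_0^\infty \phi(\sqrt r)\, dr \;=\; 2\beta'(U) \int_0^\infty u\phi(u)\, du \;=\; \beta'(U) \|g\|_2 \|h\|_2,
\]
sidestepping Prékopa-Leindler entirely. The $\gamma''$ case with $g = h$ is identical, with $\beta''(U)$ in place of $\beta'(U)$.
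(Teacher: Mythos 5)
Your proposal is correct and, for the $\beta_p = \gamma_p$ identity, follows essentially the same route as the paper: the easy direction is restriction to indicator functions, and the reverse direction goes through the level-set inclusion $U + \g(t) + \h(s) \subset \{F \ge ts\}$, then reduces the resulting layer-cake inequality to the continuous Pr\'ekopa--Leindler theorem via the exponential change of variables. The paper isolates that last analytic step as a stand-alone ``multiplicative Pr\'ekopa--Leindler'' lemma (Lemma~\ref{lm:prekopa-leidner}) with the normalization $\max g = \max h = 1$; you carry out the same substitution inline. (Small slip: in the final rescaling the parameter should read $\lambda = 1/p$ rather than $\lambda = 1/q$, but this is only a matter of which exponent you call $\lambda$, and the computation $\int f = p\int u$, $\int g = q\int v$ confirms the constant $p^{1/p}q^{1/q}$.)

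Where you genuinely depart from the paper is in $\gamma'$ and $\gamma''$: the paper just says ``the other equalities follow similarly'' and stops, whereas you observe that when $g \sim h$ (resp.\ $g = h$) the diagonal choice $s = t$ suffices, since $\phi = \psi$ turns the layer-cake integral $\int_0^\infty \phi(\sqrt r)\,dr$ into $\|g\|_2^2$ exactly, bypassing Pr\'ekopa--Leindler altogether. This is a clean shortcut worth noting; it would not work for $\gamma_p$ with general $g,h$ (where Cauchy--Schwarz goes the wrong way on the diagonal term), so your remark that the equidistribution hypothesis is what makes the simplification possible is precisely the right observation. You also implicitly correct a typo in the statement: the third identity should read $\beta''(U) = \gamma''(\1U)$, which your argument establishes.
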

\begin{proof}

We prove only $\beta_p(U) = \gamma_p(1_U)$, as the other equalities follow similarly (in fact the definitions of $\gamma' , \gamma''$ are designed just for this). We have
$$\frac{|A+B+U|}{|A|^{p} |B|^{q}} = \frac{ \norm{1_U\C 1_A\C 1_B}_1}{ \norm{1_A}_p  \norm{1_B}_q },$$
and the inequality $\gamma_p(1_U) \leq \beta_p(U)$ follows from taking an infimum over $A$ and $B$. 

To prove the reverse inequality, we need a lemma, which is a multiplicative analog of Pr\'ekopa-Leindler, Theorem~\ref{PL}.

\begin{Lemma}[Multiplicative Pr\'ekopa-Leindler] \label{lm:prekopa-leidner}
  Let $F, G, H$ be measurable functions $\mathbb{R}_+ \to [0, 1]$ and $1 < p, q < \infty $ are H\"older conjugates, that is 
  $$
  \frac{1}{p} + \frac{1}{q} = 1.
  $$
  Assume that for any $u, v \in \mathbb{R}_{+}$
  $$
  H(uv) \geq F(u)G(v).
  $$
  Then 
  $$
  \|H\|_1 \geq \left(\int^1_{0} F^p(t^{1/p}) dt\right)^{1/p} \left(\int^1_{0} G^q(t^{1/q}) dt\right)^{1/q}
  $$
\end{Lemma}
\begin{proof}
  Define 
  $$
  h(x) := H(e^{-x})e^{-x}
  $$
  and further
  \begin{eqnarray}
  f(x) &:=& F^p(e^{-x/p})e^{-x} \\
  g(x) &:=& G^q(e^{-x/q})e^{-x}.
  \end{eqnarray}
 We then have that for any $x, y > 0$
 $$
 h\left(x/p + y/q\right) \geq f^{1/p}(x) g^{1/q}(y),
 $$
 so by the Pr\'ekopa-Leindler  inequality, Theorem~\ref{PL},
 $$
 \| h \|_1 \geq \|f\|^{1/p}_1 \|g\|^{1/q}_1
 $$
 The claim follows after the change of variables $t = e^{-x}$.
\end{proof}

   We want to prove that for any non-negative functions $g,h$
   $$
   \| 1_V \C g \C h \|_1 \geq \beta (V) \|g\|_p \|h\|_q.
   $$
After rescaling, we may assume $\max g = \max h = 1$.  Let 
$$
S(t) := \{z : 1_V \C g\C h(z) \geq t\}.
$$
Further define the distribution functions
$$
G(t) := \{z : g(z) \geq t\}
$$
and 
$$
H(t) := \{z : h(z) \geq t\}.
$$

For any $u, v$ then have the inclusion
$$
  G(u) + H(v) + V \subset S(uv),
$$
so
$$
|S(uv)| \geq \beta_p(V) |G(u)|^{1/p} |H(v)|^{1/q}.
$$
It follows from Lemma~\ref{lm:prekopa-leidner} that 
$$
\| 1_V \C g \C h \|_1 \geq \beta_p(V) \left(\int^1_{0} G(t^{1/p}) dt\right)^{1/p} \left(\int^1_{0} H(t^{1/q}) dt\right)^{1/q}.
$$
The result now follows from the layer-cake principle of the form
$$\int F(t^{1/p})dt = \int f(t)^pdt.$$

\end{proof}

\section{The independence problem}

The independence problem arises as it did for sets.

For this section we change the notations to $\gamma(U, G)$ etc. to indicate the ambient group.

\begin{Th}[Ambient group does not change $\gamma$]
  Let $G$ be a group, $G'$ its subgroup, $f\in\ell^1( G')$. We have
 \[ \gamma_p(f, G) = \gamma_p(f, G'). \]  
\end{Th}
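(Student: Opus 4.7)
The plan is to prove the nontrivial direction $\gamma_p(f, G) \ge \gamma_p(f, G')$ by adapting the strategy used for the set analog in Theorem~\ref{th:betaind}, with $\ell^p$ and $\ell^q$ norms replacing cardinalities and a \emph{weighted} geometric mean replacing the symmetric one to accommodate the asymmetric exponents. The reverse direction $\gamma_p(f, G) \le \gamma_p(f, G')$ is immediate: any $(g,h)$ supported in $G'$ extends by zero to $G$ without changing $\norm{f \C g \C h}_1$, $\norm{g}_p$, or $\norm{h}_q$.

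For the main direction, fix nonnegative $g, h$ on $G$ with $\norm{g}_p, \norm{h}_q < \infty$, and split them along cosets of $G'$: for each coset $c \in G/G'$ let $g_c$ denote the restriction of $g$ to $c$, regarded as a function on $G'$ after translation by a representative of $c$, and define $h_c$ analogously. Since $f$ is supported in $G'$, the max-convolution $f \C g_a \C h_b$ lives in the coset $a + b$, so for fixed $a$ the family $\{f \C g_a \C h_b\}_b$ has pairwise disjoint supports. Pointwise monotonicity of $\C$ gives $f \C g \C h \ge f \C g_a \C h_b$ pointwise, so summing in $\ell^1$ and invoking the definition of $\gamma_p(f, G')$ on each term (after translating back to $G'$) yields, for every $a_0$,
\[
\norm{f \C g \C h}_1 \;\ge\; \sum_b \norm{f \C g_{a_0} \C h_b}_1 \;\ge\; \gamma_p(f, G') \, \norm{g_{a_0}}_p \sum_b \norm{h_b}_q.
\]
Choosing $a_0$ with $\norm{g_{a_0}}_p = \max_a \norm{g_a}_p$ (attained since $\sum_a \norm{g_a}_p^p = \norm{g}_p^p < \infty$), and likewise choosing $b_0$ to maximize $\norm{h_b}_q$ for the symmetric bound, produces
\[
\begin{aligned}
\norm{f \C g \C h}_1 &\ge \gamma_p(f, G') \cdot \max_a \norm{g_a}_p \cdot \sum_b \norm{h_b}_q,\\
\norm{f \C g \C h}_1 &\ge \gamma_p(f, G') \cdot \max_b \norm{h_b}_q \cdot \sum_a \norm{g_a}_p.
\end{aligned}
\]

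To combine these I will take the weighted geometric mean with weights $1/q$ and $1/p$ (rather than $1/2$--$1/2$ as in the set case $p=2$), obtaining
\[
\norm{f \C g \C h}_1 \ge \gamma_p(f, G') \bigl(\max_a \norm{g_a}_p\bigr)^{1/q} \bigl(\textstyle\sum_a \norm{g_a}_p\bigr)^{1/p} \bigl(\max_b \norm{h_b}_q\bigr)^{1/p} \bigl(\textstyle\sum_b \norm{h_b}_q\bigr)^{1/q}.
\]
The proof then concludes with the pointwise inequality $\bigl(\max_a x_a\bigr)^{1/q}\bigl(\sum_a x_a\bigr)^{1/p} \ge \bigl(\sum_a x_a^p\bigr)^{1/p}$, which after $p$-th powers reduces to $x_a^p \le x_a (\max_a x_a)^{p-1}$ (using $p/q = p-1$). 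Specializing to $x_a = \norm{g_a}_p$ recovers $\norm{g}_p$, and the symmetric statement for $h$ recovers $\norm{h}_q$.

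The step requiring the most care is the choice of weights: the identity $1/p + 1/q = 1$ is exactly what forces \emph{both} Hölder-type reductions to be simultaneously compatible for the same pair of weights; any other choice would recover an $\ell^1$ norm on one side and fail on the other. Everything else reduces to monotonicity of $\C$, disjointness of supports across cosets of $G'$, and a standard $\epsilon$-approximation if one worries about the attainment of the maxima.
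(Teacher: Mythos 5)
Your proof is correct, and it takes a genuinely different (and more self-contained) route than the paper. The paper simply remarks that the theorem ``follows from Theorem~\ref{th:betaisgamma} and Theorem~\ref{th:betaind}'' -- i.e.\ it reduces the functional statement to the set-theoretic one via $\beta_p(U)=\gamma_p(\1U)$ and then appeals to the coset-splitting argument that proved $\beta(U,G)=\beta(U,G')$. That reduction is immediate only when $f$ is an indicator function; for general $f\in\ell^1(G')$ the cited chain doesn't literally close, and the paper leaves that gap to the reader. What you do instead is redo the coset decomposition directly at the functional level: split $g,h$ along cosets of $G'$, use that $f\C g_a\C h_b$ is supported in the single coset $a+b$ (so the family is disjointly supported as $b$ varies), obtain the two one-sided bounds, and combine them. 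Your key technical adaptation -- taking the $(1/q,1/p)$-weighted geometric mean rather than the symmetric one, so that the elementary inequality $(\max_a x_a)^{1/q}\bigl(\sum_a x_a\bigr)^{1/p}\geq\bigl(\sum_a x_a^p\bigr)^{1/p}$ (equivalently $x_a^p\leq x_a(\max x_a)^{p-1}$, using $p/q=p-1$) reassembles $\norm{g}_p$ and $\norm{h}_q$ exactly -- is precisely what is needed and is not spelled out anywhere in the paper. So your argument both fills the logical gap for non-indicator $f$ and makes the $p\neq 2$ bookkeeping explicit; the paper's version buys brevity at the cost of being a proof by reference that only transparently covers the indicator case.
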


\begin{proof}
This follows from Theorem~\ref{th:betaisgamma} and Theorem~\ref{th:betaind}. 
\end{proof}
\begin{Conj}[Functional independence hypothesis]
Let $G$ be a group, $G'$ its subgroup, $f\in\ell^1( G')$ and let $\vartheta$ be any of the functionals
$\gamma', \gamma''$. We have
 \[ \vartheta(f, G) = \vartheta(f, G'). \]  
\end{Conj}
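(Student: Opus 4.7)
My plan mirrors the set-level proof of Theorem~\ref{th:betaind}, lifted to the functional setting. The easy direction $\gamma_p(f,G) \leq \gamma_p(f,G')$ is immediate, since any admissible pair $(g,h)$ in $\ell^1(G')\times\ell^1(G')$ is also admissible in $\ell^1(G)\times\ell^1(G)$, so enlarging the domain of the infimum can only decrease it.

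For the reverse, fix nonnegative $g,h\in\ell^1(G)$ and decompose them along the cosets of $G'$: for each $\bar c\in G/G'$, choose a representative and let $\tilde g_{\bar c}\in\ell^1(G')$ denote the restriction of $g$ to the coset $\bar c$, translated into $G'$; define $\tilde h_{\bar d}$ analogously. Since $f$ is supported on $G'$, the restriction of $f\C g\C h$ to each coset $\bar z$ of $G'$ dominates pointwise the translate of $f\C\tilde g_{\bar c}\C\tilde h_{\bar d}$ for every decomposition $\bar c+\bar d=\bar z$. Taking $\ell^1$ norms (which are translation-invariant), summing over cosets, and applying the definition of $\gamma_p(f,G')$ termwise gives, for any fixed $\bar c^*$,
\[
\|f\C g\C h\|_1 \;\geq\; \gamma_p(f,G')\,\|\tilde g_{\bar c^*}\|_p\sum_{\bar d}\|\tilde h_{\bar d}\|_q.
\]
Taking $\bar c^*$ to maximize $\|\tilde g_{\bar c}\|_p$ and running the symmetric argument swapping the roles of $g$ and $h$ yields the two lower bounds $\|f\C g\C h\|_1 \geq \gamma_p(f,G')\cdot AD$ and $\|f\C g\C h\|_1 \geq \gamma_p(f,G')\cdot CB$, where $A = \max_{\bar c}\|\tilde g_{\bar c}\|_p$, $B = \sum_{\bar c}\|\tilde g_{\bar c}\|_p$, $C = \max_{\bar d}\|\tilde h_{\bar d}\|_q$, $D = \sum_{\bar d}\|\tilde h_{\bar d}\|_q$.

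The last step is to extract $\|g\|_p\|h\|_q$ from $A,B,C,D$. The asymmetric counterpart of ``$\sum x_i^2 \le (\max x_i)\sum x_i$'' used for $p=2$ in Theorem~\ref{th:betaind} is the H\"older-type bound $\sum X^p \leq (\max X)^{p-1}\sum X$, which applied to $X_{\bar c} = \|\tilde g_{\bar c}\|_p$ gives $\|g\|_p \leq A^{1/q}B^{1/p}$; symmetrically $\|h\|_q \leq C^{1/p}D^{1/q}$. Taking the weighted geometric mean of the two displayed lower bounds with weights $1/q$ and $1/p$ then gives
\[
\|f\C g\C h\|_1 \;\geq\; \gamma_p(f,G')\,(AD)^{1/q}(CB)^{1/p} \;=\; \gamma_p(f,G')\,A^{1/q}B^{1/p}C^{1/p}D^{1/q} \;\geq\; \gamma_p(f,G')\,\|g\|_p\|h\|_q,
\]
which completes the proof after taking the infimum over $g,h$. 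The main subtlety, and the reason the author's one-line appeal to the symmetric Theorem~\ref{th:betaind} is honest, lies in matching the weights $1/q$ and $1/p$ in the geometric mean to the H\"older exponents appearing in $\|g\|_p\|h\|_q$; for $p=2$ this collapses to the symmetric averaging used in Theorem~\ref{th:betaind}, while for general $p$ one must track the asymmetric bookkeeping carefully.
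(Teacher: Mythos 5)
You have proven the wrong statement. The item in question is a \emph{Conjecture}, not a theorem, and it concerns the restricted functionals $\gamma'$ and $\gamma''$, not $\gamma_p$. The independence of $\gamma_p$ from the ambient group is the theorem immediately preceding this conjecture, which the paper dispenses with in one line (via Theorems~\ref{th:betaisgamma} and~\ref{th:betaind}). What you have written is a correct, self-contained direct proof of that preceding $\gamma_p$ theorem: your decomposition along cosets of $G'$, the bound $\sum X^p \le (\max X)^{p-1}\sum X$ applied to the fiberwise $\ell^p$-norms, and the $1/q,1/p$-weighted geometric mean of the two one-sided estimates all do exactly the right job, and this is arguably more transparent than routing through $\beta$ and Theorem~\ref{th:betaind}. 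But it is not a proof of the conjecture, and the ``subtlety'' you identify (general $p$ versus $p=2$) is not the one that keeps the conjecture open.

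The actual obstruction is precisely the one your argument destroys. Recall that $\gamma'(f)$ infimizes over pairs with $g\sim h$ (identically distributed) and $\gamma''(f)$ over $g=h$. To bound $\|f\C g\C h\|_1$ from below in terms of $\gamma'(f,G')$ or $\gamma''(f,G')$ you would need, for each coset pair you invoke, the fibers $\tilde g_{\bar c^*}$ and $\tilde h_{\bar d}$ to again be identically distributed (respectively equal) as functions on $G'$. But even if $g=h$ on $G$, the restrictions $\tilde g_{\bar c^*}$ and $\tilde g_{\bar d}$ to \emph{different} cosets need not be identically distributed, and your key step --- fixing $\bar c^*$ to maximize $\|\tilde g_{\bar c}\|_p$ while letting $\bar d$ range --- deliberately pairs mismatched fibers. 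This is the same difficulty that makes the set-level independence Conjecture~\ref{ind} open for $\alpha',\alpha'',\beta',\beta''$; Theorem~\ref{th:betaind} succeeds for $\beta$ only because $\beta$ has no symmetry constraint binding $A$ to $B$. The paper offers no proof of the functional conjecture, and your proposal does not supply one.
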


\section{Direct product}\label{funcdir}

\begin{Th}[Multiplicativity]
   Let $G = G_1 \times G_2$,  $f_1\in\ell^1( G_1)$,  $f_2\in\ell^1( G_2)$,  and define $f\in\ell^1( G)$ by 
   $f(x,y)=f_1(x)f_2(y)$. We have
    \[ \gamma_p(f) =  \gamma_p(f_1) \gamma_p(f_2), \]
\[ \gamma'(f) \leq \gamma'(f_1) \gamma'(f_2), \]
\[ \gamma''(f) \leq \gamma''(f_1) \gamma''(f_2). \] 
\end{Th}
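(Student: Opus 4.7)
The three inequalities split naturally into an easy ``upper'' direction (sub-multiplicativity, needed for all three) and a harder ``lower'' direction (only asserted, and only needed, for $\gamma_p$). The whole argument is the functional analogue of Statement~\ref{sub} on the upper side and of Theorem~\ref{betafactor} on the lower side.

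For the sub-multiplicative bound $\gamma_p(f)\le\gamma_p(f_1)\gamma_p(f_2)$ (and similarly for $\gamma',\gamma''$), I would test against product functions: given candidates $g_1,h_1$ on $G_1$ and $g_2,h_2$ on $G_2$, set $g(x,y)=g_1(x)g_2(y)$ and $h(x,y)=h_1(x)h_2(y)$. The key observation is that the max-convolution of tensor products is a tensor product of max-convolutions, i.e.
\[ (f\C g\C h)(x,y)=(f_1\C g_1\C h_1)(x)\cdot(f_2\C g_2\C h_2)(y), \]
because the max over $G_1\times G_2$ of a separable product separates. Combined with the factorization $\|g\|_p=\|g_1\|_p\|g_2\|_p$ and the analogous identity for $\|h\|_q$, the ratio defining $\gamma_p$ factors exactly, and taking infima yields the inequality. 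For $\gamma''$ we simply take $g_1=h_1$ and $g_2=h_2$; for $\gamma'$ I would verify that $g_1\sim h_1$ and $g_2\sim h_2$ force $g_1\otimes g_2\sim h_1\otimes h_2$ by decomposing the distribution function of a product into a convolution-like sum over level sets.

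The lower bound $\gamma_p(f)\ge\gamma_p(f_1)\gamma_p(f_2)$ is the main obstacle. Fix arbitrary non-negative $g,h$ on $G_1\times G_2$ and consider the $G_2$-fibers $g_{t_1}(\cdot):=g(t_1,\cdot)$ and $h_{u_1}(\cdot):=h(u_1,\cdot)$. Define the fiber-norm profiles
\[ \phi(t_1):=\|g_{t_1}\|_p,\qquad \psi(u_1):=\|h_{u_1}\|_q, \]
so that $\|\phi\|_p=\|g\|_p$ and $\|\psi\|_q=\|h\|_q$ by Fubini. The plan is to prove the pointwise (in $z_1$) inequality
\[ \sum_{z_2}(f\C g\C h)(z_1,z_2)\;\ge\;\gamma_p(f_2)\cdot(f_1\C\phi\C\psi)(z_1), \]
which, summed over $z_1$ and combined with the definition of $\gamma_p(f_1)$, produces the desired bound. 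To establish it, unwind the max in $(f\C g\C h)(z_1,z_2)$ by isolating its $G_1$-coordinates: for every choice of $s_1,t_1$ with $u_1=z_1-s_1-t_1$,
\[ (f\C g\C h)(z_1,z_2)\;\ge\;f_1(s_1)\,(f_2\C g_{t_1}\C h_{u_1})(z_2). \]
Sum over $z_2$ (valid because only $z_2$ is summed and the other indices are frozen), apply the definition of $\gamma_p(f_2)$ to the inner $\ell^1$-norm, and then take the max over $s_1,t_1$ on the right.

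The hard part is exactly the interchange of $\max$ and $\sum$ above: one must be careful that the argument works \emph{after} freezing $s_1,t_1$ and only then taking the max, since $\max$ and $\sum$ do not commute. This is why the technique tensorizes only the \emph{unrestricted} functional $\gamma_p$: the auxiliary profiles $\phi,\psi$ are constructed from $g$ and $h$ by different norms ($p$ versus $q$) and need not satisfy $\phi\sim\psi$ even if $g\sim h$, nor can one arrange $\phi=\psi$ from $g=h$ for $\gamma''$ in a way that matches the $p=q=2$ identity back to the original definition. Consequently, only sub-multiplicativity is claimed (and proved as above) for $\gamma'$ and $\gamma''$, while the equality for $\gamma_p$ follows by combining the two directions.
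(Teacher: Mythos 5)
Your argument is correct and follows essentially the same path as the paper. The sub-multiplicativity direction is the paper's observation that max-convolution of separable functions tensorizes (so the ratio factors exactly when $g,h$ are products, with the $\gamma'$ case handled because identical distributions are preserved under tensor products), and the lower bound for $\gamma_p$ is precisely the sum-max interchange combined with the fiber $\|\cdot\|_p$ and $\|\cdot\|_q$ profiles $\phi,\psi$ that the paper packages in greater generality as Theorem~\ref{gammafactor} (tensorization for an arbitrary quotient $G\to G/H$) and then specializes via Theorem~\ref{thm:cart_tenzor_gamma}.
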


\begin{proof}
The $\leq$ inequalities all follow from the fact that (with $g_i, h_i$ defined similarly to $f_i$) 
$$
f \C g \C h(x, y) \leq f_1 \C g_1 \C h_1(x) f_2 \C g_2 \C h_2(y)
$$
for any $x \in G_1$ and $y \in G_2$. 

The reverse inequality for $\gamma_p$ follows from a much more general Theorem~\ref{gammafactor} (and Theorem~\ref{thm:cart_tenzor_gamma}) towards which we immediately proceed.

%We now prove the first equality. Then
%\begin{align*}
%||f \C g \C h||_1 &= \sum_{\substack{u \in G_1 \\ v \in G_2}} %\max_{\substack{ x + a + c = u \\ y + b + d = v}}   f_1(x) %f_2(y) g(a,b)h(c,d) \\
%&= \sum_{u\in G_1} \max_{x + a + c = u} f_1(x)\sum_{v \in G_2} %\sum_{v = y + b + d}  f_2(y) g(a,b)h(c,d) \\
%& \geq \gamma_p(f_2)\sum_{u \in G_1} \sum_{u = x + a + c}  %f_1(x)  \left( \sum_{b} g(a,b)^p \right)^{1/p} \left( \sum_{d} %h(c,d)^q\right)^{1/q} \\
%& \geq \gamma_p(f_1) \gamma_p(f_2) ||g||_p ||h||_q
%\end{align*}
%Thus 
%$$\gamma(f) \geq \gamma(f_1) \gamma(f_2).$$
\end{proof}

%We now prove the functional analog of Theorem~\ref{betafactor}. 
\begin{Th}[Tensorization]\label{gammafactor}
   Let  $G$ be a group, $H$ a subgroup, $G'=G/H$ the factor group, $\varphi: G \to G'$ the natural homomorphism,
   $f\in\ell^1( G)$. Define $f_{\varphi} \in\ell^1( G')$ by
    \[ f_{\varphi} (x) := \gamma_p \bigl(f|_{ \varphi^{-1}(x)} \bigr).    \]
     We have $\gamma_p(f)  \geq \gamma_p(f_{\varphi})$.
\end{Th}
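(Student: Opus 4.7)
The plan is to split the $\ell^1$ norm of $f \C g \C h$ along the cosets of $H$, apply the definition of $\gamma_p$ fibre by fibre, recognize the result as a max-convolution on the quotient $G'$, and close the loop with one more application of $\gamma_p$.

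More concretely, fix any non-negative $g,h \in \ell^1(G)$. For every $y\in G'$ define the fibrewise norms
\[ g_{\varphi}(y) := \bigl\| g|_{\varphi^{-1}(y)} \bigr\|_p, \qquad h_{\varphi}(y) := \bigl\| h|_{\varphi^{-1}(y)} \bigr\|_q.\]
By Fubini for the counting measure one checks immediately that $\|g_{\varphi}\|_p = \|g\|_p$ and $\|h_{\varphi}\|_q = \|h\|_q$. The first key observation is that for any $x\in G'$ and any $z\in \varphi^{-1}(x)$, the value $(f\C g\C h)(z)$ is a max over triples $(t_1,t_2,t_3)$ with $t_1+t_2+t_3=z$, which can be organized by the cosets $x_i=\varphi(t_i)$ satisfying $x_1+x_2+x_3=x$. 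This yields the pointwise identity
\[ (f\C g\C h)(z) \; = \; \max_{x_1+x_2+x_3=x}\bigl(f|_{\varphi^{-1}(x_1)} \C g|_{\varphi^{-1}(x_2)} \C h|_{\varphi^{-1}(x_3)}\bigr)(z). \]

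Summing over $z \in \varphi^{-1}(x)$ and dropping all but one choice of the decomposition gives, for every $x_1+x_2+x_3=x$,
\[ \sum_{z\in\varphi^{-1}(x)} (f\C g\C h)(z) \; \geq \; \bigl\| f|_{\varphi^{-1}(x_1)} \C g|_{\varphi^{-1}(x_2)} \C h|_{\varphi^{-1}(x_3)} \bigr\|_1. \]
Applying the defining inequality of $\gamma_p$ to the triple $(f|_{\varphi^{-1}(x_1)},\; g|_{\varphi^{-1}(x_2)},\; h|_{\varphi^{-1}(x_3)})$ bounds the right-hand side below by $f_{\varphi}(x_1)\, g_{\varphi}(x_2)\, h_{\varphi}(x_3)$. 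Taking the maximum over all admissible decompositions $x_1+x_2+x_3=x$ gives
\[ \sum_{z\in\varphi^{-1}(x)} (f\C g\C h)(z) \; \geq \; (f_{\varphi} \C g_{\varphi} \C h_{\varphi})(x). \]

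Summing over $x\in G'$, we obtain $\|f\C g\C h\|_1 \geq \|f_{\varphi} \C g_{\varphi} \C h_{\varphi}\|_1$. A final application of $\gamma_p$ on the quotient to the triple $(f_{\varphi}, g_{\varphi}, h_{\varphi})$ yields
\[ \|f\C g\C h\|_1 \; \geq \; \gamma_p(f_{\varphi})\, \|g_{\varphi}\|_p\, \|h_{\varphi}\|_q \; = \; \gamma_p(f_{\varphi})\, \|g\|_p\, \|h\|_q,\]
and dividing by $\|g\|_p\|h\|_q$ and taking the infimum over $g,h$ gives the theorem.

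The argument is essentially bookkeeping; the main point requiring a careful check is the pointwise identity that expresses $(f\C g\C h)|_{\varphi^{-1}(x)}$ as the max over coset-triples of restricted max-convolutions, but this is forced by the definition of $\C$ once the variables $t_i$ are classified by their cosets. The companion identities $\|g_{\varphi}\|_p = \|g\|_p$, $\|h_{\varphi}\|_q = \|h\|_q$ are just the $\ell^p$ version of Fubini and are exactly what makes the $\gamma_p$-functional tensorize cleanly. No multiplicative Pr\'ekopa--Leindler or further analytic tool is needed here — that input was absorbed into the passage from $\beta_p$ to $\gamma_p$ via Theorem~\ref{th:betaisgamma}.
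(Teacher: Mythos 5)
Your proof is correct and is essentially the same argument as the paper's: both decompose the $\ell^1$ sum over cosets of $H$, apply the elementary inequality $\sum\max\geq\max\sum$ (which you phrase as ``dropping all but one decomposition'' and then maximizing), invoke $\gamma_p$ fibre by fibre to produce the functions $g_\varphi, h_\varphi$, observe that $\|g_\varphi\|_p=\|g\|_p$ and $\|h_\varphi\|_q=\|h\|_q$, and finish with one more application of $\gamma_p$ on the quotient. The only difference is presentational: the paper writes it as a single chain of displayed (in)equalities, whereas you first isolate the pointwise identity classifying triples by their $H$-cosets.
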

\begin{proof}

%We then have
%$$
%S(x, y) = \max_{u, v} f(u, v)g(x-u, y-v)
%$$
%and 
%$$
%t(x, y) = \max_{u \in U, v \in U_v} S(x-u, y-v)
%$$

Let $g,h \in \ell^`(G)$ be non-negative. We have
\begin{align}
||f \C g \C h||_1 &= \sum_{z\in G} \max_{x_1 + x_2 + x_3 = z} f(x_1) g(x_2) h(x_3) \nonumber \\
& =\sum_{z_1 \in G'}  \sum_{z \in z_1 + H} \max_{\substack{w_1 , w_2 , w_3  \in G' \\ w_1 + w_2 + w_3 = z_1}} \max_{\substack{x_1 + x_2 +x_3 = z \\ x_i \in H + w_i}} f(x_1) g(x_2) h(x_3) \nonumber \\
& \geq \sum_{z_1 \in G'} \max_{\substack{w_1 , w_2 , w_3  \in G' \\ w_1 + w_2 + w_3 = z_1}} \sum_{z \in z_1 + H} \max_{\substack{x_1 + x_2 +x_3 = z \\ x_i \in H + w_i}} f(x_1) g(x_2) h(x_3) \label{eq:last}  
%\sum_{w_1 , w_2 , w_3 \in G/H} \gamma( f_{\phi})(w_1) %\left(\sum_{x_2 \in H + w_2} g(x_2)^p %\right)^{1/p}\left(\sum_{x_3 \in H+ w_3} h(x_3)^q \right)^{1/q} %\\
\end{align}
Define the functions $g', h'$ on $G'$ as follows
$$
g'(w_2) := ||g|_{\varphi^{-1}(w_2)}||_p, h'(w_3) := ||h|_{\varphi^{-1}(w_3)}||_q.
$$
One can further estimate (\ref{eq:last})
\begin{align*}
& \geq \sum_{z_1 \in G'} \max_{\substack{w_1 , w_2 , w_3  \in G' \\ w_1 + w_2 + w_3 = z_1}} f_{\varphi} (w_1) g'(w_2) h'(w_3) \\
& \geq \gamma_p(f_{\varphi}) ||g||_p ||h||_q
\end{align*}
The last inequality follows from the observation that 
$$
||g'||_p = ||g||_p, ||h'||_q = ||h||_q.
$$
\end{proof}

We can specialize Theorem~\ref{gammafactor} to cartesian products. 

\begin{Th} \label{thm:cart_tenzor_gamma}
  Let $G = G_1 \times G_2$, $f$ a function on $G$. For $x\in G_1$, define $f_x(y)=f(x,y)$, functions on $G_2$.
Let $g(x) = \gamma(f_x)$, a function on $G_1$. We have $\gamma_p(f)  \geq \gamma_p(g)$.
\end{Th}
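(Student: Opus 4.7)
My plan is to derive Theorem~\ref{thm:cart_tenzor_gamma} as an immediate specialization of Theorem~\ref{gammafactor}. The setup of the latter requires a group $G$, a subgroup $H$, the factor group $G' = G/H$, and the natural projection $\varphi \colon G \to G'$; the conclusion is $\gamma_p(f) \geq \gamma_p(f_\varphi)$, where $f_\varphi(x) := \gamma_p(f|_{\varphi^{-1}(x)})$. To recover the Cartesian-product statement, I would take $H := \{0\} \times G_2$ inside $G = G_1 \times G_2$. Then $G'$ is canonically isomorphic to $G_1$, and $\varphi$ becomes the projection $(x,y) \mapsto x$.

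With this identification the coset $\varphi^{-1}(x)$ is precisely $\{x\} \times G_2$, and restricting $f$ to this coset yields (after the obvious identification with $G_2$) exactly the slice function $f_x(y) = f(x,y)$. Hence $f_\varphi(x) = \gamma_p(f|_{\varphi^{-1}(x)}) = \gamma_p(f_x)$, which (reading $\gamma$ as $\gamma_p$ in the statement, as consistency with the conclusion requires) is exactly the function $g$ defined in Theorem~\ref{thm:cart_tenzor_gamma}. Applying Theorem~\ref{gammafactor} then gives $\gamma_p(f) \geq \gamma_p(f_\varphi) = \gamma_p(g)$, which is the desired inequality.

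The only small check I would make explicit is the compatibility of $\gamma_p$ with the identification $H \cong G_2$ and $G' \cong G_1$, i.e.\ that $\gamma_p(f|_{\varphi^{-1}(x)})$ computed inside the coset agrees with $\gamma_p(f_x)$ computed inside $G_2$. This is immediate because $\gamma_p$ is defined purely in terms of the max-convolution and $\ell^p$ norms, which are preserved under translation and the canonical group isomorphism. There is essentially no obstacle here: the content of the theorem lies entirely in Theorem~\ref{gammafactor}, and the Cartesian-product form is a bookkeeping restatement.
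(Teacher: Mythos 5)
Your proof is correct and is essentially identical to the paper's: the paper likewise takes $\varphi\colon G_1\times G_2\to G_1$ to be the projection (equivalently, $H=\{0\}\times G_2$), observes $f_\varphi(x)=\gamma_p(f_x)$, and invokes Theorem~\ref{gammafactor}. Your additional remark that the $\gamma$ in the statement should be read as $\gamma_p$ is a reasonable reading of what is evidently a small typo in the paper.
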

\begin{proof}
We let $\varphi: G_1 \times G_2 \to G_1$ by projection. 
Then 
$$f_{\varphi}(x) = \gamma(f_x),$$
and so the result follows from Theorem~\ref{gammafactor}. 
\end{proof}

We are now ready to prove Theorem~\ref{betafactor}. 

\begin{proof}[Theorem~\ref{betafactor}]
Let $U$ be as in the statement and $f = 1_U = :  U $. Then, by Theorem~\ref{th:betaisgamma}, for $x\in V$ 
$$
 f_{\varphi}(x) := \gamma_p\left(f|_{\varphi^{-1}(x)} \right) = \beta_p(U \cap \varphi^{-1}(x)).
$$
In particular, we have the point-wise bound
$$
f_{\varphi}(x) \geq 1_V(x) \min_{t \in V}\beta_p(U \cap \varphi^{-1}(t)),
$$
so again by Theorem~\ref{th:betaisgamma} and linearity
$$
\gamma_p(f_{\varphi}) \geq \beta_p(V) \min_{t \in V}\beta_p(U \cap \varphi^{-1}(t)).
$$
The result now follows from Theorem~\ref{gammafactor}, as 
$$
\beta_p(U) = \gamma_p(f) \geq \gamma_p(f_{\varphi}) \geq \beta_p(V) \min_{x \in V}\beta_p(U \cap \varphi^{-1}(x)).
$$
\end{proof}

\section{Functional tripling for functions supported on quasicubes}

The goal of this section is to prove Theorem~\ref{thm:quasicube_beta}. The basic strategy is to use tensorization to reduce to a two-point inequality. We let $U$ be a $d$ dimensional quasi-cube, defined in Defintion~\ref{Def:quasicube}. Thus $U$ is $d$ dimensional as thus we may assume 
$$U \subset H' \times \mathbb{Z}^d,$$
where $H'$ is a torsion subgroup. We first show we may assume $H' = \{0\}$. Let $\pi$ be the projection to $\mathbb{Z}^d$. Then by induction, it follows that $|\pi(U)| = |U|$. Thus for $V \subset U$, to prove Theorem~\ref{thm:quasicube_beta}, 
$$\beta(\pi(V)) \geq |V|.$$
By Theorem~\ref{th:betaind}, we may assume 
$$V \subset U \subset \mathbb{Z}^d.$$
Central to our study will be the following.
\begin{Th}[$\gamma$ for two-point functions]\label{Th:gamma2point}
For $0 \leq \delta \leq 1$, 
$$f_{\delta} := 1_{\{0\}} + \delta 1_{\{1\}}.$$
Then 
$$\gamma(f_{\delta}) = \delta +1,$$
and more generally,
$$\gamma_p(f_{\delta})  \geq \frac{p^{1/p} q^{1/q}}{2}(1 + \delta).$$
\end{Th}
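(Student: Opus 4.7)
The upper bound $\gamma_p(f_\delta) \leq 1+\delta$ is immediate by taking $g = h = \1{\{0\}}$, for which $f_\delta \C g \C h = f_\delta$ itself, so the ratio equals $\|f_\delta\|_1 = 1+\delta$. At $p = 2$ we have $c_p = 1$, so the identity $\gamma(f_\delta) = 1+\delta$ will follow once we have the lower bound $\gamma_p(f_\delta) \geq c_p(1+\delta)$.

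For the lower bound I would work via distribution functions. Set $K = g \C h$ and $F = f_\delta \C g \C h$, so that $F(z) = \max(K(z), \delta K(z-1))$. Writing $G(u) = |\{g \geq u\}|$, $H(v) = |\{h \geq v\}|$, and $\sigma(t) = |\{F \geq t\}|$, the basic geometric input is
\[
\sigma(\delta u v) \;\geq\; G(u) + H(v)
\]
whenever both sides are nonempty. This holds because $\{F \geq \delta u v\} \supset \{0,1\} + \{g \geq u\} + \{h \geq v\}$, and adding the two-point set $\{0,1\}$ to a nonempty finite subset of $\setZ$ improves Cauchy--Davenport by one extra element. The sharp elementary inequality $a + b \geq p^{1/p} q^{1/q} a^{1/p} b^{1/q}$ (sharp at $a/b = q/p$, and precisely the inequality that yields $\beta_p(\{0,1\}) = p^{1/p}q^{1/q}$) then promotes the above to the multiplicative form $\sigma(\delta u v) \geq p^{1/p}q^{1/q} G(u)^{1/p} H(v)^{1/q}$.

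I would apply the multiplicative Pr\'ekopa--Leindler inequality (Lemma~\ref{lm:prekopa-leidner}) to $\tilde H(w) := \sigma(\delta w)/(p^{1/p}q^{1/q})$ with the substitution $v \mapsto v/\delta$; the layer-cake formulas $\int_0^\infty G(s)\, p s^{p-1}\, ds = \|g\|_p^p$ and the analogue for $h$ evaluate the resulting integrals and produce
\[
\|F\|_1 \;\geq\; p^{1/p}q^{1/q}\, \delta\, \|g\|_p \|h\|_q \;=\; 2 c_p\, \delta\, \|g\|_p \|h\|_q.
\]
At $\delta = 1$ this is already the target and recovers the set-theoretic $\beta_p(\{0,1\}) = p^{1/p}q^{1/q}$ in the functional setting.

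The main obstacle is boosting the prefactor from $2c_p\delta$ to $c_p(1+\delta)$ when $\delta < 1$. For this one must additionally exploit the complementary Cauchy--Davenport bound $\sigma(u v) \geq G(u) + H(v) - 1$ at the less restrictive scale $u v$, which encodes the $y_1 = 0$ branch of $f_\delta$. I would combine the two via the identity
\[
\|F\|_1 \;=\; (1+\delta)\|K\|_1 \;-\; \sum_z \min\bigl(K(z),\, \delta K(z-1)\bigr),
\]
reducing the task to an upper bound on the overlap sum; the geometric sequence $g(i) = h(i) = \delta^i$ saturates everything and verifies that $(1+\delta)$ is the correct tight factor at $p = 2$. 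The delicate point is the ``$-1$'' in the second Cauchy--Davenport bound, which prevents a direct second application of Pr\'ekopa--Leindler; I would handle it by splitting the $t$-integral $\int \sigma(t)\,dt$ at the natural threshold $t = \delta\|K\|_\infty$, applying each of the two Cauchy--Davenport bounds in its own regime, and reassembling the pieces so that the weighted combination $(\delta) + (1-\delta) = 1$ of the two sub-integrals contributes exactly the missing factor $(1+\delta)/(2\delta)$.
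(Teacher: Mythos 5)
Your overall strategy is genuinely different from the paper's. The paper proves Theorem~\ref{Th:gamma2point} by a variational argument: reduce via Lemma~\ref{increasing} to non-increasing $g,h$, take a putative counterexample of minimal joint support size, perturb the entries on the index set where $\delta^{-i}g(i)$ is maximal (and similarly for $h$), use the first-order optimality condition to derive inequality~\eqref{min}, and conclude that either the support can be shrunk (contradiction with minimality) or $g,h$ are geometric, in which case Example~\ref{geom} closes the argument. You instead go through distribution functions, Cauchy--Davenport, and the multiplicative Pr\'ekopa--Leindler lemma, which mirrors the mechanism of Theorem~\ref{th:betaisgamma} rather than the variational mechanism the paper actually uses for this theorem.

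Your first half is correct and cleanly executed: the inclusion $\{0,1\}+\{g\geq u\}+\{h\geq v\}\subset\{F\geq\delta uv\}$, the Cauchy--Davenport bound $\sigma(\delta uv)\geq G(u)+H(v)$, the weighted AM--GM promotion to $\sigma(\delta uv)\geq p^{1/p}q^{1/q}G(u)^{1/p}H(v)^{1/q}$, and the application of Lemma~\ref{lm:prekopa-leidner} all check out and yield $\|F\|_1\geq p^{1/p}q^{1/q}\,\delta\,\|g\|_p\|h\|_q = 2c_p\delta\|g\|_p\|h\|_q$. The upper bound $\gamma_p(f_\delta)\leq 1+\delta$ via $g=h=\1{\{0\}}$ is also correct and worth keeping.

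However, the crucial step --- upgrading $2c_p\delta$ to $c_p(1+\delta)$ --- is precisely where your sketch stops being a proof. The identity $\|F\|_1=(1+\delta)\|K\|_1-\sum_z\min(K(z),\delta K(z-1))$ is true, but you have given no argument bounding the overlap sum, and when $K$ is nearly constant on a long interval that sum is close to $\delta\|K\|_1$, wiping out the $(1+\delta)$ gain. The second branch of Cauchy--Davenport, $\sigma(uv)\geq G(u)+H(v)-1$, carries a $-1$ that you yourself flag as obstructing a second application of Pr\'ekopa--Leindler; your final paragraph (``split at $t=\delta\|K\|_\infty$, reassemble so that $(\delta)+(1-\delta)=1$ contributes the missing factor $(1+\delta)/(2\delta)$'') is not a computation --- a convex combination summing to $1$ cannot by itself produce a multiplicative factor of $(1+\delta)/(2\delta)$, and you have not specified what is being integrated in each regime or how the pieces combine. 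As it stands this is a plan, not a proof, and for small $\delta$ the gap between what you have ($\approx 2c_p\delta$) and what is needed ($\approx c_p$) is qualitative. You would need either to carry out the split-integral argument explicitly and show it actually closes, or to switch to something like the paper's perturbation-plus-compactness approach, which sidesteps the ``$-1$'' issue entirely by characterising the minimiser.
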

We remark that the stronger
$$\gamma_p(f_{\delta}) \geq (\delta^{c} +1)^{1/c} ,  \ \ \ 2^{1/c} = p^{1/p} q^{1/q},$$
is probably true, though we do not require it (see this mathoverflow post \cite{MOpost}). Such a result would be useful for quasi-cubes that are asymmetrical in size. We also remark that Pr\'ekopa \cite[Equation 2.4]{Prekopa} proved Theorem~\ref{Th:gamma2point} in the special case $p=q=2$ and $\delta =1$ and his proof extends to the $\delta = 0$ case. 

We now present an important family of examples. First, they are a natural guess for minimizers of $\gamma
_p(f_{\delta})$ and indeed show that Theorem~\ref{Th:gamma2point} is best possible. Secondly, in the proof of Theorem~\ref{Th:gamma2point} below, we show that to bound $\gamma_p(f_{\delta})$ from below it is enough to consider $g$ and $h$, from  Definition~\ref{gamma}, from the following. 
\begin{Ex}\label{geom}
Fix $0 < \delta < 1$. Let $g = (1 , \delta , \ldots , \delta^r)$ and $h = (1 , \delta , \ldots , \delta^s)$. Then 
$$||f_{\delta} \C g \C h ||_1 = \sum_{j=0}^{r+s+1} \delta^j = \frac{1 - \delta^{r+s+2}}{1-\delta},$$
while
$$||g||_p^p = \frac{1 - \delta^{p(r+1)}}{1-\delta^p},$$
and 
$$||h||_q^q = \frac{1-\delta^{q(s+1)}}{1-\delta^q}.$$
Note that 
\begin{equation} \label{same} \frac{1-\delta^{r+s+2}}{(1-\delta^{p(r+1)})^{1/p}(1-\delta^{q(s+1)})^{1/q}} \geq 1.\end{equation}
Indeed, this follows from the inequality
$$(1-x y) \geq (1-x^p)^{1/p} (1-y^q)^{1/q} , \ \ \ 0 \leq x,y \leq 1,$$
which is an application of H\"older's inequality applied to the vectors
$$(x^n)_{n \in \mathbb{Z}_{\geq 0}} , \ \  \ (y^n)_{n \in \mathbb{Z}_{\geq 0}} .$$
Thus \eqref{same} is minimized by allowing $r,s \to \infty$ and so 
$$\frac{||f_{\delta} \C g \C h||_1}{||g||_p ||h||_q} \geq \frac{(1-\delta^p)^{1/p} (1-\delta^q)^{1/q}}{1-\delta}.$$
In the most important case, that is $p=q=2$, we have 
$$\frac{||f_{\delta} \C g \C h||_1}{||g||_2 ||h||_2} \geq 1+ \delta,$$ 
while the more general case is a bit harder.
\begin{Lemma}\label{minim}[Minimizer for $\gamma_p$]
Let $0 < \delta <1$ and 
$$g = h = (1, \delta ,\delta^2 \ldots, ).$$
Then we have $$\frac{||f_{\delta} \C g \C h||_1}{||g||_p ||h||_q} \geq \frac{p^{1/p} q^{1/q}}{2} (1+\delta).$$
\end{Lemma}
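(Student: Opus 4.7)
The plan is to specialize Example~\ref{geom} to the $r,s\to\infty$ limit, i.e.\ the full one-sided geometric sequence. A direct check shows $(f_\delta\C g\C h)(z) = \delta^z$ for every integer $z \geq 0$ (the $x=0$ and $x=1$ branches of $f_\delta$ both yield $\delta^z$), so
\[ \norm{f_\delta\C g\C h}_1 = \frac{1}{1-\delta}, \qquad \norm{g}_p = (1-\delta^p)^{-1/p}, \qquad \norm{h}_q = (1-\delta^q)^{-1/q}. \]
Substituting and clearing the common factor $1-\delta$, the claim is equivalent to the scalar inequality
\[ (1-\delta^p)^{1/p}(1-\delta^q)^{1/q} \geq \frac{p^{1/p}q^{1/q}}{2}\,(1-\delta^2), \qquad 0 < \delta < 1. \]

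Taking logarithms and exploiting the trivial identity $\tfrac{1}{p}\cdot p + \tfrac{1}{q}\cdot q = 2$, I would rewrite the target as
\[ \frac{1}{p}\log\frac{1-\delta^p}{p} + \frac{1}{q}\log\frac{1-\delta^q}{q} \geq \log\frac{1-\delta^2}{2}. \]
With $G(r) := (1-\delta^r)/r$ this is precisely Jensen's inequality for $\log G$ with weights $(1/p,1/q)$ evaluated at the points $(p,q)$, \emph{provided} $\log G$ is convex on $(0,\infty)$. So the whole problem reduces to log-convexity of $G$.

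For this, I write $\delta = e^{-s}$ with $s > 0$ and note the Laplace-transform representation
\[ G(r) = \frac{1-e^{-rs}}{r} = \int_0^{s} e^{-ru}\,du, \]
so that $G$ is the Laplace transform of the finite non-negative measure $du$ restricted to $[0,s]$. Log-convexity of such a transform is standard: applying Cauchy--Schwarz to $e^{-r_1 u/2}$ and $e^{-r_2 u/2}$ in $L^2([0,s],du)$ gives
\[ G\bigl((r_1+r_2)/2\bigr) \leq G(r_1)^{1/2}\,G(r_2)^{1/2}, \]
and continuity of $G$ upgrades midpoint log-convexity to full log-convexity. Jensen now delivers the required bound.

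The main subtlety I anticipate is identifying the \emph{correct} direction of curvature: Jensen works only if $\log G$ is convex, whereas a naive first guess (motivated by the concavity of $r\mapsto 1-\delta^r$) would try concavity and reverse the inequality. The Laplace-transform viewpoint is what cleanly pins this down. As sanity checks, the limit $\delta \to 0$ collapses the target inequality to the weighted AM-GM bound $p^{1/p}q^{1/q}\leq 2$, while $\delta \to 1$ produces equality to leading order, confirming that the constant $c_p = p^{1/p}q^{1/q}/2$ in Theorem~\ref{Th:gamma2point} is sharp.
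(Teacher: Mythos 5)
Your proof is correct and is essentially the paper's argument in different clothing: the paper applies H\"older's inequality with exponents $p,q$ directly to $\int_\delta^1 s^{r-1}\,ds$, which delivers the required instance of log-convexity $G(2)\le G(p)^{1/p}G(q)^{1/q}$ in one step, whereas you derive it via Cauchy--Schwarz (midpoint log-convexity of the same integral after the substitution $t=e^{-u}$) followed by a continuity upgrade and Jensen. The reduction of the lemma to the scalar inequality $(1-\delta^p)^{1/p}(1-\delta^q)^{1/q}\ge \tfrac{p^{1/p}q^{1/q}}{2}(1-\delta^2)$ is identical in both.
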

\begin{proof}
Similar to the $p=q=2$ case above it suffices to show that
$$\frac{(1-\delta^p)^{1/p} (1-\delta^q)^{1/q}}{1-\delta} \geq \frac{p^{1/p} q^{1/q}}{2} (1+\delta)$$
or
$$
(1-\delta^{p})^{1/p}(1-\delta^{q})^{1/q} \geq \frac{p^{1/p} q^{1/q}}{2}(1-\delta^2).
$$
This follows upon applying H\"older's ienquality:
$$\int_{\delta}^1 s^2 \frac{ds}{s} \leq \left(\int_{\delta}^1 s^p \frac{ds}{s} \right)^{1/p} \left(\int_{\delta}^1 s^q \frac{ds}{s} \right)^{1/q} .$$
\end{proof}
\end{Ex}

\begin{proof}[Proof of Theorem~\ref{thm:quasicube_beta} and Theorem~\ref{mainthm}.] 
We first show how Theorem~\ref{Th:gamma2point} implies Theorem~\ref{thm:quasicube_beta}. Thus we assume $p=q=2$. 
By the discussion at the beginning at the current section we may assume that $V \subset \mathbb{Z}^d$. By Theorem~\ref{th:betaind} we can further assume that in fact $V \subset \mathbb{Q}^d$. Since $\beta$ is invariant under bijective linear transformations of $\mathbb{Q}^d$, after a suitable translation and choosing a basis $\{e_i\}$ for the ambient group $\mathbb{Q}^d$ (now viewed as a linear space) one can WLOG write 
$$V = 0 \oplus V_0 \cup e_1 \oplus V_1,$$
where $V_0$ and $V_1$ are $d-1$ dimensional quasi-cubes. 

Now, we again use the independence of the ambient group (Theorem~\ref{th:betaind}) to reduce the ambient group to the one generated by $V$, so that now $V \subset G := \mathbb{Z}e_1 \times \mathbb{Z}^{d-1}$.

Let $A,B \subset \mathbb{Z}^d$ and write
$$A = \bigcup_i ie_1 \oplus A_i , \ \ \ B =\bigcup_j je_1 \oplus B_j.$$

We claim that $\gamma(\1V) = |V|$ and in particular
\begin{equation}\label{inter} 
|A+B+V| \geq ||f \C g \C h||_1, 
\end{equation}
where 
$$f = |V_0| 1_0 + |V_1| 1_1, \ \ \ g(i) = |A_i|^{1/2} , \ \ \ h(j) = |B_j|^{1/2}.$$

We induct on the dimension on $V$. The base case follows directly from Theorem~\ref{Th:gamma2point} and linearity of $\gamma$.

We let $\pi_1$ be projection onto the first coordinate and 
$$
X_k = \pi^{-1}(k) \cap (A+B+V).
$$
Then, $X_k$ contains all the fiber sumsets as long as the first coordinate equals $k$, so
\begin{align*}|X_k| &= \max \{ \max_{i+ j = k} |A_i + B_j + V_0| , \max_{i+j = k-1} |A_i + B_j + V_1|\} \\ 
& \geq \max\{\max_{i+ j = k} \beta(V_0) |A_i|^{1/2} |B_j|^{1/2} , \max_{i+j = k-1}\beta(V_1) |A_i|^{1/2} |B_j|^{1/2}\} \\
& = \max\{\max_{i+ j = k} \gamma(\1{V_0}) |A_i|^{1/2} |B_j|^{1/2} , \max_{i+j = k-1}\gamma(\1{V_1}) |A_i|^{1/2} |B_j|^{1/2}\} \\
& = \max\{\max_{i+ j = k} |V_0| |A_i|^{1/2} |B_j|^{1/2} , \max_{i+j = k-1}|V_1| |A_i|^{1/2} |B_j|^{1/2}\} \\
& = f \C g \C h (k).\end{align*}

Summing over $k$ gives \eqref{inter}.  By Theorem~\ref{Th:gamma2point}, we have
$$|A+B+V| \geq ||f \C g \C h||_1 \geq (|V_1| + |V_2|) ||f||_2 ||g||_2 = |V| |A|^{1/2} |B|^{1/2},$$
which implies Theorem~\ref{thm:quasicube_beta}.

We now handle the case of general $p$. Everything proceeds the same as in the $p=2$ case, except the induction claim is
$$\gamma_p(\1V) \geq \frac{|V|p^{d/p} q^{d/q}}{2^d}.$$

Theorem~\ref{mainthm} now follows from Theorem~\ref{th:betaisgamma}.

\end{proof}

We now proceed to the proof of Theorem~\ref{Th:gamma2point}. We first need the following lemma.
\begin{Lemma}[$\gamma$ and permutations]\label{increasing}
Let $f,g,h : \mathbb{Z} \to \mathbb{R}$ be non-negative functions with finite support. Let $\sigma ,\tau , \rho$ be permuations of the support of $f,g,h$, respectively. Set
$$
f_{\sigma}(x) : = f\circ \sigma(x),
$$
and similarly for $g$ and $h$. Then 
$$
||f_{\sigma} \C g_{\tau} \C h_{\rho}||_1,
$$ is minimized for a choice of permutations that makes each function non-increasing.
\end{Lemma}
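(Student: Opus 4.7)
My plan is to reduce the statement to a pointwise comparison of level sets via the layer-cake formula, and then to a local swap inequality. Starting from
\[
\|f_\sigma \C g_\tau \C h_\rho\|_1 = \int_0^\infty |L_t(\sigma,\tau,\rho)|\,dt, \qquad L_t = \{f_\sigma \C g_\tau \C h_\rho \geq t\},
\]
the very definition of max-convolution yields the representation
\[
L_t(\sigma,\tau,\rho) = \bigcup_{\alpha\beta\gamma \geq t} \bigl(A_\alpha(\sigma) + B_\beta(\tau) + C_\gamma(\rho)\bigr),
\]
with $A_\alpha(\sigma) := \{f_\sigma \geq \alpha\}$ and analogously for $B_\beta, C_\gamma$. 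The crucial observation is that the cardinalities of these level sets depend only on the multisets of values of $f,g,h$; the permutations $\sigma,\tau,\rho$ only move the level sets around inside the fixed supports $S_f, S_g, S_h$.

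It thus suffices to prove that for every $t > 0$ the cardinality $|L_t|$ is minimized by the non-increasing choice $(\sigma^*,\tau^*,\rho^*)$. For this choice each $A_\alpha(\sigma^*)$ is the leftmost initial segment of $S_f$ of the prescribed size, and as $\alpha$ decreases the three families $A_\alpha, B_\beta, C_\gamma$ form nested chains; consequently the Minkowski sums $A_\alpha + B_\beta + C_\gamma$ are also nested, and the union defining $L_t(\sigma^*,\tau^*,\rho^*)$ collapses to its single largest member. Under an arbitrary arrangement the component sumsets can sit at essentially unrelated positions inside $S_f + S_g + S_h$, which ought to force the union to be at least as large. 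Integrating the resulting inequality $|L_t(\sigma^*,\tau^*,\rho^*)| \leq |L_t(\sigma,\tau,\rho)|$ against $dt$ then yields the lemma.

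To prove the pointwise comparison I would use a local swap. Suppose $f_\sigma$ is not non-increasing and pick consecutive $x_i < x_{i+1}$ in $S_f$ with $u := f_\sigma(x_i) < f_\sigma(x_{i+1}) =: v$; let $\sigma'$ be the permutation obtained by exchanging these two values. For every $(b,c) \in S_g \times S_h$ the two contributions $u\,g_\tau(b) h_\rho(c)$ at position $x_i + b + c$ and $v\,g_\tau(b) h_\rho(c)$ at $x_{i+1} + b + c$ are replaced by the same pair with $u$ and $v$ interchanged, so collecting $\ell^1$-contributions along cosets of $(x_{i+1}-x_i)\setZ$ reduces the swap inequality to a one-dimensional estimate of the form
\[
\sum_k \max\bigl(M_k,\, u Q_k,\, v Q_{k-1}\bigr) \;\geq\; \sum_k \max\bigl(M_k,\, v Q_k,\, u Q_{k-1}\bigr),
\]
with $u < v$, $M_k$ absorbing the contributions from all $a \in S_f \setminus \{x_i, x_{i+1}\}$, and $Q_k$ given by the values of $g_\tau \C h_\rho$ along the coset. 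Iterating the swap on each of $f,g,h$ in turn brings every function to its non-increasing form. The hard part will be precisely this one-dimensional estimate: term by term the difference has no definite sign, so the verification requires a careful global matching of the "$uQ$" and "$vQ$" contributions across the coset. I expect to push this through by telescoping along the nested level-set decomposition of $g_\tau \C h_\rho$ and, where needed, falling back on the Cauchy--Davenport bound $|A+B+C| \geq |A|+|B|+|C|-2$ to confirm that the non-increasing arrangement saturates the union $\bigcup A_\alpha + B_\beta + C_\gamma$ on the relevant sumsets.
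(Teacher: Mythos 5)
Your opening reduction is sound: by the layer-cake formula, $\|f_\sigma \C g_\tau \C h_\rho\|_1 = \int_0^\infty |L_t|\,dt$, each level set is a union $\bigcup_{\alpha\beta\gamma\ge t}\bigl(A_\alpha(\sigma)+B_\beta(\tau)+C_\gamma(\rho)\bigr)$, and the sizes of the individual level sets $A_\alpha,B_\beta,C_\gamma$ are permutation-invariant. You also correctly observe that for the non-increasing choice the three families are nested initial segments, so the union collapses to a single interval of length $\max_{\alpha\beta\gamma\ge t}\bigl(|A_\alpha|+|B_\beta|+|C_\gamma|\bigr)-2$. At that point you are essentially done: for an arbitrary arrangement, pick the optimizing triple $(\alpha^*,\beta^*,\gamma^*)$; then $L_t(\sigma,\tau,\rho)\supseteq A_{\alpha^*}(\sigma)+B_{\beta^*}(\tau)+C_{\gamma^*}(\rho)$, whose cardinality is at least $|A_{\alpha^*}|+|B_{\beta^*}|+|C_{\gamma^*}|-2 = |L_t(\sigma^*,\tau^*,\rho^*)|$ by the sumset bound $|X+Y+Z|\ge|X|+|Y|+|Z|-2$ in $\mathbb{Z}$. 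That single line, which you relegate to a ``fall back,'' is actually the entire proof of the pointwise comparison; it is the same combinatorial content as the paper's argument, which instead compares $G(v)$ against the $v$-th largest value of $F$ and invokes the same sumset bound.

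The local-swap detour, however, contains a genuine gap that no amount of telescoping will repair. The one-dimensional inequality you want, namely
\[
\sum_k \max\bigl(M_k,\,uQ_k,\,vQ_{k-1}\bigr)\;\ge\;\sum_k\max\bigl(M_k,\,vQ_k,\,uQ_{k-1}\bigr),\qquad u<v,
\]
is simply false when $Q$ is not already non-increasing, and it will fail to be non-increasing whenever $g_\tau$ or $h_\rho$ has not yet been sorted. Concretely, take $f=1_{\{0\}}+2\cdot1_{\{1\}}$, $g=1_{\{0\}}+3\cdot1_{\{1\}}$, $h=1_{\{0\}}$, so that $Q=g\C h=(1,3)$, $u=1$, $v=2$, $M\equiv0$. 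Then the left side equals $1+3+6=10$ while the right side equals $2+6+3=11$, so a single bubble-sort step on $f$ alone strictly increases $\|f\C g\C h\|_1$. Thus ``iterating the swap on each of $f,g,h$ in turn'' cannot produce a monotone decrease toward the minimum, and there is no obvious joint-swap surrogate. You should simply delete the swap analysis and finish with the direct Cauchy--Davenport comparison above, which is also, modulo packaging, the paper's proof.
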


\begin{proof}
We may suppose, after translation, that the smallest element of the support is zero for all three functions. Put $F:=f \C g \C h$ and let $\sigma, \tau, \rho$ be some permutations such that $f_\sigma,g_\tau, h_\rho$ are non-increasing. Note that $G := f_{\sigma} \C g_{\tau} \C h_{\rho}$ is then also non-increasing. Let $s$ be a sufficiently large number and order the sequence $F(0), \ldots, F(s)$ (which will end with zeroes) via 
\begin{equation} \label{eq:ordered_support}
F_0 \geq \cdots \geq F_s.
\end{equation}
We claim that for $0 \leq v \leq s$
$$
f_\sigma \C g_\tau \C h_\rho(v) = G(v) \leq F_v,
$$ 
and the result follows from this claim. Let $m, n, r$ be such that
$m+n+r = v$ and 
$$
f_{\sigma}(m)g_\tau(n)h_\rho(r) = G(v).
$$
It follows by the choice of $\sigma, \tau, \rho$, that for any $i\leq m$, $j \leq n$, $k \leq r$
\begin{eqnarray*} 
G(v) &=& f_{\sigma}(m)g_\tau(n)h_\rho(r) \\
&\leq& f_{\sigma}(i)g_\tau(j)h_\rho(k) \\
&=& f(\sigma(i))g(\tau(j))h(\rho(k)) \\
&\leq& F(\sigma(i) + \tau(j) + \rho(k)) 
\end{eqnarray*}
It follows that for any 
\begin{equation} \label{eq:perm_sets}
 t \in \{\sigma(0), \ldots, \sigma(m)\} + \{\tau(0), \ldots, \tau(n)\} + \{\rho(0), \ldots, \rho(r)\}   
\end{equation}
holds
$$
G(v) \leq F(t).
$$
But the sumset on the RHS of (\ref{eq:perm_sets}) is of size at least $m+n+r+1$, by the Cauchy-Davenport inequality. Thus, there are at least $v+1$ values in the sequence (\ref{eq:ordered_support}) that are no less than $G(v)$, and hence
$$
G(v) \leq F_v.
$$
\end{proof}

\begin{proof}[Proof of Theorem~\ref{Th:gamma2point}]

We aim to show that for non-negative valued $g$ and $h$ in $\ell_1(\mathbb{Z})$
\begin{equation} \label{ratio} \frac{||f_{\delta} \C g \C h||}{||g||_p ||h||_q}\ge c_{\delta}=\frac{(1-\delta^p)^{1/p} (1-\delta^q)^{1/q}}{1-\delta},\end{equation}

We remind the reader that $c_{\delta}$ is the infimum of \eqref{ratio} over all $g,h$ of the form \begin{equation}\label{form} g = (1 , \delta , \ldots , \delta^r) , \ \ \ h = (1 , \delta , \ldots , \delta^s),\end{equation}

as shown in Example ~\ref{geom}.

We suppose there is a $g,h \in \ell^{1}(\mathbb{Z})$ such that \eqref{ratio} is smaller than $c_{\delta}$. By continuity, we may suppose both $g$ and $h$ have finite support. By Lemma~\ref{increasing}, we may permute $g,h$ so they are both non-increasing.  After translation of the supports, we suppose 
$$g = (g_0 , \ldots g_r), \ \ \ h = (h_0 , \ldots , h_s).$$
We further assume that $r+s$ is minimally chosen. Thus 
$$\frac{||f_{\delta} \C u \C v||_1}{||u||_p ||v||_q} \geq c_{\delta} ,$$
for any $u$ and $v$ satisfying
$$|{\rm supp}(u)| + |{\rm supp}(v)| < r+s+2.$$
By compactness, there exists $g,h$ which minimize \eqref{ratio}
subject to ${\rm supp}(g) \subseteq \{0, ..., r\}$, ${\rm supp}(h) \subseteq \{0, ..., s\}$, and 
\begin{equation}\label{norm} ||g||_p = 1 ,  \ \ \ ||h||_q = 1 .\end{equation}

Because the value of \eqref{ratio} doesn't change by multiplying $g$ and $h$ by constant, this is also a minimum over all $g$ and $h$ where ${\rm supp}(g) \subseteq \{0, ..., r\}$ and ${\rm supp}(h) \subseteq \{0, ..., s\}$

Set
$$p(x) = f_{\delta} \C g \C h(x) .$$
Let 
$$Q_1 \subset \{0 , \ldots , r\}, \ \ \ Q_2 \subset \{0 , \ldots , s\}.$$

We let $R(Q_1)$ be the set of all indices $n \in \{0 , \ldots , r+s+1\}$ such that if 
$$p_n = f_i g_j h_k , \ \ \ (i+j + k = n),$$
then $j \in Q_1$, and similarly for $Q_2$. We now analyze what happens when we replace $g$ with $g_t$ which we get by multiplying all $g(i)$ by $(1-t)$ where $i\in Q_1$. ($t$ is a small enough positive real number.)

In $f_{\delta}\C g_t \C h$, the values corresponding to $R(Q_1)$ will be multiplied by $(1-t)$, the other values will be the same as in $f_{\delta}\C g \C h$.

$$r(t) : =  \frac{||f_{\delta} \C g_t \C h||_1}{||g_t||_p ||h||_q}.$$

$r'(0)$ is the right-hand derivative of $r(t)$ at 0. By minimality, $r'(0)\ge 0$.

The right-hand derivative of $||f\C g\C h||_1$ at $0$ is $$-\sum_{y \in R(Q_1)} p(y)$$

and the right-hand derivative of $||g_t||_p$ is $$-\frac{1}{p}\frac{1}{||g_t||_p^{p+1}}p(1-t)^{p-1}(-1)\sum_{x \in Q_1} g(x)^p$$ which is equal to $$\frac{\sum_{x \in Q_1} g(x)^p}{ ||g||_p^{p+1} ||h||_q}$$ at $0$.

So $$ 0 \leq r'(0) = \frac{ ||p||_1 \sum_{x \in Q_1} g(x)^p}{ ||g||_p^{p+1} ||h||_q} - \frac{\sum_{y \in R(Q_1)} p(y)}{||g||_p ||h||_q}$$

By symmetry we get a similar inequality for any $Q_2 \subset \{0 , \ldots , s\}$, so by reframing the inequalities

\begin{equation}\label{min} ||p||_1^{1/p} \geq \frac{||g||_p\left( \sum_{y \in R(Q_1)} p(y) \right)^{1/p}}{ \left(\sum_{x \in Q_1} g(x)^p \right)^{1/p} }, \ \ \  ||p||_1^{1/q} \geq \frac{||h||_q\left( \sum_{y \in R(Q_2)} p(y) \right)^{1/q}}{ \left(\sum_{x \in Q_2} h(x)^p \right)^{1/p} } \end{equation} 
We now define new functions,
$$a(i) = \delta^{-i} g(i) , \ \ \ b(j) = \delta^{-j} h(j).$$
We set $$Q_1 = \{i : a(i) = \max a\} , \ \ \ Q_2 = \{j : b(j) = \max b\}.$$
and set 
$$u(i) = g(i)1_{Q_1}(i) , \ \ \ v(j) = h(j) 1_{Q_2}(j).$$

$$R(Q_1)\supseteq {\rm supp}(f_{\delta}\C u\C v),$$ 
$$R(Q_2)\supseteq {\rm supp}(f_{\delta}\C u\C v)$$

So $\sum_{y \in R(Q)} p(y)\ge ||f_{\delta}\C u\C v||_1$ is true for both $Q_1$ and $Q_2$.

Combining it with \eqref{min}, we get 

\begin{equation}\label{sides} ||p||_1^{1/p} \geq \frac{||g||_p ||f_{\delta}\C u\C v||_1^{1/p}}{ ||u||_p }, \ \ \  ||p||_1^{1/q} \geq \frac{||h||_q||f_{\delta}\C u\C v||_1^{1/q}}{||v||_q } \end{equation} 

Multiplying the two inequalities in \eqref{sides}, we get 

$$\frac{||f_{\delta} \C g \C h||}{||g||_p ||h||_q}\ge \frac{||f_{\delta} \C u \C v||}{||u||_p ||v||_q}$$

If either $a$ or $b$ is not constant, then $u$ or $v$ has a value of $0$ at some point. Then by Lemma~\ref{increasing} we can rearrange it to a non-increasing order, with making $$\frac{||f_{\delta} \C u \C v||}{||u||_p ||v||_q}$$ smaller or equal after the rearrangement.

Now $|{\rm supp}(u)|+|{\rm supp}(v)|<|{\rm supp}(g)|+|{\rm supp}(h)|$, so because we started with a counterexample with minimal supports, $$\frac{||f_{\delta} \C u \C v||}{||u||_p ||v||_q}\ge c_{\delta}$$

This is a contradiction, because we assumed that $$\frac{||f_{\delta} \C g \C h||}{||g||_p ||h||_q}<c_{\delta}$$

So $a$ and $b$ must both be constant, but then \eqref{ratio} can't be smaller than $c_{\delta}$, as proved in Example ~\ref{geom}..
Thus the ratio in \eqref{ratio} is at least $c_{\delta}$. By Lemma ~\ref{minim}
$$c_{\delta} \geq p^{1/p}q^{1/q}\frac{(1+\delta)}{2}.$$
\end{proof}

\section*{Acknowledgement}
GS is supported by Ben Green's Simons Investigator Grant 376201. DZ is supported by a Knut and Alice Wallenberg Fellowship (Program for Mathematics 2017). IR is supported by Hungarian National Foundation for Scientific Research (OTKA), Grants No. T 29759 and T 38396. The authors thank Mat\'e Matolsci, Thomas Bloom, Misha Rudnev, Oliver Roche-Newton, Ben Green and Ilya Shkredov for useful discussions.

\bibliographystyle{alpha}
\bibliography{main}

\end{document}